\documentclass[twoside,12pt,reqno]{article} 
\pagestyle{myheadings}
\markboth{Y.Tang and S.Q. Zhang}{Inertial-like  proximal point methods for null point problems}
\usepackage{mathrsfs,times}
\setlength{\textheight}{8.5in}
\setlength{\textwidth}{6.5in}
\setlength{\oddsidemargin}{0cm}
\setlength{\evensidemargin}{0cm}
\setlength{\topmargin}{0cm}

\date{}

\usepackage[english]{babel}
\usepackage[ansinew]{inputenc}
\usepackage{amsmath,amsthm,amsfonts,amssymb,graphicx}
\usepackage{graphics}
\usepackage{algorithm,algorithmic}
\usepackage{graphicx}
\usepackage{epstopdf}
\usepackage{caption}
\usepackage{cases}
\usepackage{amssymb,amsthm,amsmath}
\usepackage[numbers,sort&compress]{natbib}

 \newtheorem{theorem}{Theorem}[section]
 
 \newtheorem{lemma}[theorem]{Lemma}
 
 \theoremstyle{definition}
 \newtheorem{definition}[theorem]{Definition}
 \theoremstyle{remark}
 \newtheorem{remark}[theorem]{Remark}
 
 \theoremstyle{eg}

 \theoremstyle{fact}
 
\numberwithin{equation}{section}

\title{\Large\bf On the convergences and applications of the inertial-like proximal point methods for null point problems}

\author{Yan Tang$^{1,2}$, Shiqing Zhang*$^{1}$\\
$^{1}$College of Mathematics,Sichuan University, Chengdu,China\\
$^{2}$College of Mathematics and Statistics, Chongqing Technology \\
and Business University,Chongqing, China\\
 tangyan@ctbu.edu.cn; zhangshiqing@scu.edu.cn}


\begin{document}
\maketitle

\begin{abstract}

Motivated and inspired by the discretization of the nonsmooth system of a nonlinear oscillator with damping,  we propose what we call the {\it  inertial-like proximal} point algorithms for finding the null point of the sum of two maximal operators, which has many applied backgrounds, such as, convex optimization and variational inequality problems, compressed sensing etc..
The  common feature of the presented algorithms is using  the new inertial-like proximal point method  which  does  not involve the computation for the norm of the difference between two adjacent iterates $x_n$ and $x_{n-1}$ in advance, and avoids complex   inertial parameters satisfying the traditional and difficult checking  conditions. Numerical experiments are presented to illustrate the performances of the algorithms.

\vskip.2in \noindent
{\em 2000 Mathematics Subject Classification:} 65K10; 65K05; 47H10; 47L25.\\

\noindent {\em Keywords:}   null point problems;  inertial-like proximal methods; maximal monotone operators.

\end{abstract}


\section{Introduction}

Let $A:H\rightarrow 2^H$ be a set-valued operator in real Hilbert space $H$.
\vskip 1mm

(1)\,  The {\it graph} of  $A$ is given by
\begin{equation*}
 gph A= \{ (x,y)\in H\times H : y\in Ax\}.
\end{equation*}

(2)\, The operator $A$ is said to be {\it monotone} if
 \begin{equation*}
 \langle x-y,u-v\rangle\geq 0, u\in Ax,v\in Ay,\,\,\, \forall x,y\in H.
 \end{equation*}

 (3)\, The operator $A$ is said to be {\it maximal monotone} if $gph A$ is not properly contained in the graph of any other monotone operator.
\vskip 1mm
Let $A, B$ be two maximal monotone operators in $H$. We are concern with the well-studied {\it null point problem} that is formulated as follows:
\begin{eqnarray}\label{Eq:inclusion}
0\in (A+B)x^*
\end{eqnarray}
with solution set denoted by $\Omega$.
\vskip 1mm
A special interesting case of (\ref{Eq:inclusion}) is a minimization of sum of two proper, lower semi-continuous and convex functions $f,g:H\rightarrow \mathbb{R}$, that is,
\begin{eqnarray}\label{P:minimizing}
\min_{x\in H} \{f(x)+g(x)\}.
\end{eqnarray}
So this is equivalent to (\ref{Eq:inclusion}) with $A=\partial f$ and $B=\partial g$ being the subdifferentials of $f$ and $g$, respectively.
\vskip 1mm
We recall the resolvent operator $J_r^{A}=(I+r A)^{-1},r>0,$ which is called the {\it backward operator} and plays an significant role in the approximation theory for zero points of maximal monotone operators. Due to the work of Aoyama et al. \cite{AKT2009}, we have the following properties:
\begin{align}\label{(2.3)}
\langle J_r^{A}x-y,x-J_r^{A}x\rangle\geq 0, y\in A^{-1}(0),
\end{align}
where $A^{-1}(0)=\{z\in H:0\in Az\}$. Moreover, the following key facts  that will be needed in the sequel.

{\bf Fact 1:}  The resolvent is not only always single-valued, but also firmly monotone:
\begin{align}\label{(2.3)}
\|x-y\|^2-\|(I-J_r^{A})x-(I-J_r^{A})y\|^2\geq \| J_r^{A}x- J_r^{A}y\|^2.
\end{align}

{\bf Fact 2:} Using the resolvent operator, we can write down {\it inclusion problem} (\ref{Eq:inclusion}) as a fixed point problem.
It is known that
\begin{eqnarray*}
x^*=J_\lambda^{A}(I-\lambda{B})x^*, \lambda>0.
\end{eqnarray*}

Douglas-Rachford`s splitting method (DRSM) (or {\it forward-backward splitting method}) was first introduced in  \cite{DR1956} as an operator splitting technique to solve partial differential equations arising in heat conduction and soon later has been extended to find solutions for the sum of two maximal monotone operators  by Lions and Mercier \cite{LM1979}. Douglas-Rachford`s splitting method(DRSM) is formulated as 
\begin{equation}\label{Eq:1.7}
 x_{n+1}=J^A_{\lambda}(I-\lambda B)x_n,
\end{equation}
where $\lambda>0$, and $(I-\lambda B)$ is called the {\it forward operator}.
\vskip 1mm
 Basing on the  method (\ref{Eq:1.7}), many researchers improved and  modified the algorithms for the {\it inclusion problem} (\ref{Eq:inclusion}) and obtained nice results,
see e.g., Boikanyo \cite{B2016},  Dadashi and Postolache\cite{DP2020}, Kazmi and Rizvi \cite{KR2014}, Moudafi\cite{MT1997}\cite{M2018}, Sitthithakerngkiet et al. \cite{SDMK2018}.
\vskip 1mm
On the other hand, one classical way of looking at the null point problem $0\in Ax$ is to consider the  forward discretization for $\frac{dx}{dt}\approx\frac{x_{n+1}-x_{n}}{h_n}, \forall h_n>0$ in the {\it evolution system}:
\begin{eqnarray}\label{Eq:evol}
\begin{cases}
	\frac{dx}{dt}+Ax(t)\ni0, \\
	x(0)=x_0,
	\end{cases}
\end{eqnarray}
and then the {\it evolution system} is discretized as
\begin{equation*}
\frac{x_{n+1}-x_{n}}{h_n}+Ax_{n+1}\ni0\,\, \Longleftrightarrow\,\, (I+h_nF)x_{n+1}=x_{n},
\end{equation*}
which inspired Alvarez and Attouch \cite{AA2001} to introduce the  inertial method for the following nonsmooth case of a nonlinear oscillator with damping
\begin{eqnarray}\label{(1.4)}
\frac{d^2x}{dt^2}+\gamma\frac{dx}{dt}+A(x(t))=0, a.e. t\geq 0.
\end{eqnarray}
By discretizing, Alvarez and Attouch \cite{AA2001} obtained the implicit iterative sequence
\begin{eqnarray}\label{(1.5)}
x_{n+1}-x_n-\alpha_n(x_n-x_{n-1})+\lambda_n A(x_{n+1})\ni 0,
\end{eqnarray}
where $\alpha_n=1-\gamma h_n$ and $\lambda_n=h_n^2$, which yielded the {\it Inertial-Prox} algorithm
\begin{eqnarray}\label{(Eq:1.6)}
x_{n+1}=J_{\lambda_n}^A(x_n+\alpha_n(x_n-x_{n-1})),
\end{eqnarray}
the extrapolation term $\alpha_n(x_n-x_{n-1})$ is called {\it inertial term}.
\vskip 1mm
Note that when $\alpha_n\equiv 0$,  the recursion (\ref{(1.5)}) corresponds to the standard proximal iteration
\begin{eqnarray*}
x_{n+1}-x_n+\lambda_n A(x_{n+1})\ni 0,
\end{eqnarray*}
which has been well studied by Martinet \cite{M1970}and Moreau\cite{M1965} and other researchers, and
the weak convergence of $x_n$ to a solution of $0\in Ax$ has being well known since the classical work of  Rockafellar\cite{R1976}. 
\vskip 1mm
For ensuring the convergence of the {\it Inertial-Prox} sequence, Alvarez and Attouch\cite{AA2001} pointed the following key assumption: there exists $\alpha\in(0,1)$ such that $\alpha_n\in[0,\alpha]$ and
\begin{eqnarray}\label{(1.6)}
\sum_{n=1}^\infty \alpha_n\|x_n-x_{n-1}\|^2<\infty.
\end{eqnarray}

Inertial method is shown to have nice convergence properties in the field of continuous optimization and is studied intensively in split inverse problem by many authors soon later (because which could be utilized in some situations to accelerate the convergence of the sequences).   For some recent works applied to various fields, see Alvarez \cite{A2004}, Attouch et al. \cite{APR2014, ACPR2018, AC2016}, Ochs et al. \cite{OCBP2014,OBP2015}.
\vskip 1mm
Although Alvarez and Attouch \cite{AA2001} pointed that one can choose appropriate rule to enable the the assumption (\ref{(1.6)}) applicable, the parameter $\alpha_n$ involving the iterates $x_n$ and $x_{n-1}$ should be computed in advance. Namely, to make the condition (\ref{(1.6)}) hold, the researchers have to set constraints on the inertia coefficient $\alpha_n$ and estimate the value of the $\|x_n-x_{n-1}\|$ before choosing $\alpha_n$. In recent works, Gibali et al. \cite{GMN2018} improved the inertial control condition by constraining inertial coefficient $\alpha_n$ such that $0<\alpha_n<\bar{\alpha}_n$, where $\alpha\in(0,1),$ $\epsilon_n\in [0,\infty)$ and $\sum_{n=0}^\infty\epsilon_n<\infty$,
  \begin{eqnarray*}
\bar{\alpha}_n=
\begin{cases}
\min\{\alpha,\frac{\epsilon_n}{\|x_n-x_{n-1}\|^2}\}, & \hbox{if}\,\, x_n \neq x_{n-1},\\
\alpha , & \text{otherwise}.
\end{cases}
\end{eqnarray*}
More works on inertial methods, one can refer  Dang et al. \cite{DSX2017}, Moudafi et al.\cite{MT2013}, Suantai et al. \cite{SPC2017}, Tang \cite{Tang2019}, and therein.
\vskip 1mm
Theoretically, the condition (\ref{(1.6)}) on the parameter $\alpha_n$  is too strict for the convergence of the inertial algorithm. Practically, estimating the  value of the  $\|x_n-x_{n-1}\|$ before choosing the inertial parameter $\alpha_n$ may need large amount of computation.  The two drawbacks may make the {\it Inertial-Prox} method inconvenient in the practical test in the sense. So it is natural to think about the following question:
\vskip 1mm
{\bf Question 1.1} Can we delete the condition (\ref{(1.6)}) in inertial method?  Namely, can we construct a new inertial algorithm for solving (\ref{Eq:inclusion}) without any constraint on the  the inertial parameter  or the computation of norm of the difference between $x_n$ and $x_{n-1}$ before choosing the inertial parameter?
\vskip 1mm
The purpose of this paper is to present an affirmative answer to the above question. In this paper, we  study the convergence problem of a new inertial-like technique for the solution of the {\it null point problem} (\ref{Eq:inclusion}) without the assumption (\ref{(1.6)}) and
the prior computation of the  $\|x_n-x_{n-1}\|$ before choosing the inertial parameter $\theta_n$.
\vskip 1mm
The outline of the paper is as follows. In section 2, we collect definitions and results which
are needed for our further analysis.
In section 3, our novel approach for the {\it null point problem} is  introduced and analyzed,
the   convergence theorems  of the presented algorithms
are obtained.  Moreover, convex optimization and variational inequality problem are studied as the  applications of the {\it null point problem} in section 4.
Finally, in section 5,some numerical experiments, using the {\it inertial-like method}, are carried out in order to support our approach.

\vskip 2mm

  \section{Preliminaries}

Let $\langle \cdot,\cdot\rangle$ and $\|\cdot\|$ be the inner product and the induced norm in a Hilbert space $H$, respectively. For a sequence $\{x_n\}$ in $H$, denote $x_n\rightarrow x$ and $x_n \rightharpoonup x$ by
the strong and weak convergence to $x$ of $\{x_n\}$, respectively.
Moreover, the symbol $\omega_w(x_n)$ represents the $\omega$-weak limit set of $\{x_n\}$, that is,
\begin{eqnarray*}
\omega_w(x_n):=\{x\in H: x_{n_j}\rightharpoonup x \hspace{0.2cm} \mathrm{for}\hspace{0.2cm} \mathrm{some}\hspace{0.2cm} \mathrm{subsequence}\hspace{0.2cm} \{x_{n_j}\} \hspace{0.2cm}\mathrm{of} \hspace{0.2cm} \{x_n\}\}.
\end{eqnarray*}
The identity below is useful:
\begin{align}\label{(2.1)}
\nonumber
\|\alpha x+\beta y+\gamma z\|^2&=\alpha\|x\|^2+\beta\|y\|^2+\gamma\|z\|^2  \\
&\quad -\alpha\beta\|x-y\|^2-\beta\gamma\|y-z\|^2-\gamma\alpha\|x-z\|^2
\end{align}
for all $x,y,z \in R$ and  $\alpha+\beta+\gamma=1$.



\vskip 2mm

\noindent
\begin{definition}\label{Def:2.1}\,
 Let $H$ be a  real Hilbert space, $D\subset H$ and $T:D\rightarrow H$ some given operator.
\vskip 1mm

 (1)\, The operator $T$ is said to be {\it Lipschitz continuous} with constant $\kappa>0$ on $D$ if
\begin{equation*}
\|T(x)-T(y)\|\leq \kappa\|x-y\|,\,\,\, \forall x,y\in D.
\end{equation*}


 (2)\, The operator $T$ is said to be {\it $\gamma-$cocoercive} if there exists $\gamma>0$ such that
 \begin{equation*}
\langle Tx-Ty, x-y\rangle\geq \gamma\|T(x)-T(y)\|^2,\,\,\,\forall x,y \in D.
\end{equation*}


\end{definition}
\begin{remark}\label{re:2.1}\,



(1)\, If $T$ is $\gamma-$cocoercive, then it is $\frac{1}{\gamma}-$Lipschitz continuous.

(2)\, From {\bf Fact 1}, we can conclude that $J_r^A$ is a non-expansive operator if $A$ is a maximal monotone mapping.
\end{remark}

\begin{definition}
Let $C$ be a nonempty closed convex subset of $H$. We use $P_C$ to denote the projection from $H$ onto $C$;
namely,
\begin{eqnarray*}
P_Cx=\arg\min\{\|x-y\|:y\in C\},\quad x\in H.
\end{eqnarray*}
\end{definition}
The following  significant characterization of the projection $P_C$ should be recalled : Given $x\in H$ and $y\in C$,
\begin{eqnarray}\label{(2.6)}
P_Cx=z\quad \Longleftrightarrow\quad \langle x-z,y-z\rangle\leq 0,\quad y\in C.
\end{eqnarray}
\begin{lemma}\label{Lemma2.4} (Xu \cite{Xu2002})
Assume that $\{a_n\}$ is a sequence of nonnegative real numbers such that
 \begin{eqnarray*}
a_{n+1}\leq(1-\beta_n)a_n+\beta_n b_n+c_n,\quad n\geq 0,
\end{eqnarray*}
where $\{\beta_n\}$ is a sequence in $(0,1)$ and $\{c_n\}\subset(0,\infty)$ and $\{b_n\}\subset\mathbb{R}$ such that
\begin{itemize}
\item[(1)] $\sum_{n=1}^\infty\beta_n=\infty$;

\item[(2)] $\limsup_{n\rightarrow\infty}b_n\leq 0$ or $\sum_{n=1}^\infty\beta_n|b_n|<\infty$;
\item[(3)] $\sum_{n=1}^\infty c_n<\infty$.
\end{itemize}
Then $\lim_{n\rightarrow\infty}a_n=0$.
\end{lemma}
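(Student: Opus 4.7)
The plan is to iterate the recursive inequality and show each of the resulting terms tends to zero. First I would unroll the recursion starting at an arbitrary index $N$: induction on the bound $a_{k+1}\le(1-\beta_k)a_k+\beta_k b_k+c_k$ gives
\begin{equation*}
a_{n+1}\le \Big(\prod_{k=N}^{n}(1-\beta_k)\Big)a_N+\sum_{k=N}^{n}\beta_k b_k\prod_{j=k+1}^{n}(1-\beta_j)+\sum_{k=N}^{n}c_k\prod_{j=k+1}^{n}(1-\beta_j).
\end{equation*}
The pivotal ingredient is the elementary estimate $1-\beta_k\le e^{-\beta_k}$, which together with hypothesis~(1) forces $\prod_{k=N}^{n}(1-\beta_k)\to 0$ as $n\to\infty$ for every fixed~$N$; hence the initial-data term vanishes. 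The convention $\beta_k\in(0,1)$ keeps every factor nonnegative, so no sign issues arise when bounding the sums.

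Next I would treat the two alternatives of hypothesis~(2) separately. Under $\limsup_n b_n\le 0$: given $\varepsilon>0$, choose $N$ so large that $b_k\le\varepsilon$ for $k\ge N$ and $\sum_{k\ge N}c_k<\varepsilon$. The $b_k$-sum is then bounded by $\varepsilon\sum_{k=N}^{n}\beta_k\prod_{j=k+1}^{n}(1-\beta_j)$, which via the telescoping identity
\begin{equation*}
\sum_{k=N}^{n}\beta_k\prod_{j=k+1}^{n}(1-\beta_j)=1-\prod_{k=N}^{n}(1-\beta_k)\le 1
\end{equation*}
is at most $\varepsilon$. Bounding $\prod(1-\beta_j)\le 1$ in the last sum gives another $\varepsilon$. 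Combining with the vanishing initial-data term yields $\limsup_n a_{n+1}\le 2\varepsilon$, and since $\varepsilon$ is arbitrary, $a_n\to 0$.

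Under the alternative $\sum_n\beta_n|b_n|<\infty$ the argument is even shorter. Pick $N$ so that $\sum_{k\ge N}\beta_k|b_k|<\varepsilon$ and $\sum_{k\ge N}c_k<\varepsilon$; bounding $\prod(1-\beta_j)\le 1$ in both tails gives a contribution of at most $2\varepsilon$, and again the initial term tends to zero, so $a_n\to 0$ as before.

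The main obstacle, inasmuch as there is one, is the careful bookkeeping of the iterated products $\prod_{j=k+1}^{n}(1-\beta_j)$ and the telescoping identity used to turn the weighted sum of $\beta_k$ into $1-\prod(1-\beta_k)$; once that identity is in hand the estimates are essentially automatic. A secondary point is that hypothesis~(3) enters only through smallness of the tail $\sum_{k\ge N}c_k$, so no rate information on $\{c_n\}$ beyond summability is needed, and the two parts of hypothesis~(2) produce structurally parallel arguments differing only in whether one exploits the weights $\beta_k$ (alternative one) or invokes unconditional summability (alternative two).
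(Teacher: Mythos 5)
Your proof is correct: the unrolled recursion, the estimate $1-\beta_k\le e^{-\beta_k}$ combined with $\sum_n\beta_n=\infty$ to kill the initial-data term, and the telescoping identity $\sum_{k=N}^{n}\beta_k\prod_{j=k+1}^{n}(1-\beta_j)=1-\prod_{k=N}^{n}(1-\beta_k)$ are exactly the ingredients needed, and the nonnegativity of the weights justifies replacing $b_k$ by $\varepsilon$ in the first alternative. The paper itself gives no proof of this lemma, quoting it from Xu \cite{Xu2002}; your argument is essentially the standard one from that reference, so there is nothing to reconcile.
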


\begin{lemma}\label{Lemma2.5} (see e.g., Opial \cite{O1967})
Let $H$ be a real Hilbert space and $\{x_n\}$ be a bounded sequence in $H$.
Assume there exists a nonempty subset $S \subset H$ satisfying the properties:
\begin{itemize}
\item[(i)] $\lim_{n\rightarrow\infty}\|x_n-z\|$ exists for every $z\in S$,

\item[(ii)] $\omega_w(x_n)\subset S$.
\end{itemize}
 Then, there exists $\bar{x}\in S$ such that $\{x_n\}$ converges weakly to $\bar{x}$.
\end{lemma}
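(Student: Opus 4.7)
\textbf{Proof proposal for Lemma \ref{Lemma2.5}.} The plan is to show that the weak limit set $\omega_w(x_n)$ is a nonempty singleton contained in $S$; once this is established, a routine subsequence argument upgrades ``every weakly convergent subsequence has the same limit'' to weak convergence of the entire sequence. Since $\{x_n\}$ is bounded in the Hilbert space $H$, the Banach--Alaoglu theorem (in its sequential form valid in Hilbert spaces) guarantees that every subsequence admits a further weakly convergent subsequence, so $\omega_w(x_n)\neq\emptyset$. By hypothesis (ii), any $\bar{x}\in\omega_w(x_n)$ lies in $S$, hence by (i) the real sequence $\|x_n-\bar{x}\|$ converges.

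The heart of the proof is the classical Opial argument for uniqueness of the weak cluster point. Suppose, toward a contradiction, that there are two distinct points $\bar{x}_1,\bar{x}_2\in\omega_w(x_n)$, with subsequences $x_{n_j}\rightharpoonup \bar{x}_1$ and $x_{m_k}\rightharpoonup \bar{x}_2$. Both limits exist in $S$ by (ii), so by (i) the limits $\ell_i:=\lim_{n\to\infty}\|x_n-\bar{x}_i\|^2$ exist for $i=1,2$. I would then expand
\begin{equation*}
\|x_n-\bar{x}_1\|^2 = \|x_n-\bar{x}_2\|^2 + 2\langle x_n-\bar{x}_2,\bar{x}_2-\bar{x}_1\rangle + \|\bar{x}_2-\bar{x}_1\|^2,
\end{equation*}
and pass to the limit along the two subsequences separately. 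Along $\{n_j\}$ the cross term tends to $2\langle \bar{x}_1-\bar{x}_2,\bar{x}_2-\bar{x}_1\rangle=-2\|\bar{x}_1-\bar{x}_2\|^2$, yielding $\ell_1=\ell_2-\|\bar{x}_1-\bar{x}_2\|^2$; along $\{m_k\}$ the cross term vanishes, giving $\ell_1=\ell_2+\|\bar{x}_1-\bar{x}_2\|^2$. Subtracting the two identities forces $\|\bar{x}_1-\bar{x}_2\|^2=0$, contradicting $\bar{x}_1\neq\bar{x}_2$.

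Thus $\omega_w(x_n)$ reduces to a single element $\bar{x}\in S$. To conclude that the whole sequence converges weakly to $\bar{x}$, I would argue by contradiction one more time: if $x_n\not\rightharpoonup\bar{x}$, there is a continuous linear functional $\varphi$ and a subsequence $\{x_{n_k}\}$ with $|\varphi(x_{n_k})-\varphi(\bar{x})|\geq\varepsilon>0$; extracting a further weakly convergent subsequence (again possible by boundedness) produces a weak cluster point distinct from $\bar{x}$, contradicting the singleton property. The main obstacle is really a bookkeeping one, namely setting up the polarization identity so that the two subsequential limits produce inconsistent values of $\ell_1-\ell_2$; once that is in place, everything else is soft functional analysis using boundedness and the assumption that $\|x_n-z\|$ converges for every $z\in S$.
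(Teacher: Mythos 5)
Your proof is correct, and it is the classical Opial argument: weak sequential compactness of bounded sets gives $\omega_w(x_n)\neq\emptyset$, the polarization identity combined with hypothesis (i) forces the two subsequential limits $\ell_1=\ell_2-\|\bar{x}_1-\bar{x}_2\|^2$ and $\ell_1=\ell_2+\|\bar{x}_1-\bar{x}_2\|^2$ to coincide, and the singleton property of $\omega_w(x_n)$ then upgrades to weak convergence of the whole sequence. The paper does not prove this lemma at all, citing it as a known result of Opial, so there is nothing to compare against; your argument is the standard one found in the cited literature and contains no gaps.
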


\begin{lemma}\label{Lemma2.6}
(Maing\'e \cite{M2008}) Let $\{\Gamma_n\}$ be a sequence of real numbers that does not decrease at the infinity
in the sense that there exists a subsequence $\{\Gamma_{n_j}\}$ of $\{\Gamma_n\}$ such that
$\Gamma_{n_j}<\Gamma_{n_j+1}$ for all $j\geq 0$. Also consider the sequence
 of integers $\{\sigma(n)\}_{n\geq n_0}$ defined by
\begin{eqnarray*}
\sigma(n)=\max\{k\leq n:\Gamma_k\leq\Gamma_{k+1}\}.
\end{eqnarray*}
Then $\{\sigma(n)\}_{n\geq n_0}$ is a nondecreasing sequence verifying $\lim_{n\rightarrow\infty}\sigma(n)=\infty$ and,
for all $n\geq n_0$,
\begin{eqnarray*}
\max\{\Gamma_{\sigma(n)},\Gamma_n\}\leq \Gamma_{\sigma(n)+1}.
\end{eqnarray*}
\end{lemma}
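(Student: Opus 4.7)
The plan is to verify the lemma's three assertions in sequence: well-definedness and monotonicity of $\sigma(n)$, then the divergence $\sigma(n)\to\infty$, and finally the maximum inequality. The argument is essentially combinatorial, relying only on the hypothesis that there exist arbitrarily large indices $n_j$ with $\Gamma_{n_j}<\Gamma_{n_j+1}$.

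First, I would take $n_0$ to be the first index appearing in the hypothesized subsequence, and set $S_n := \{k\leq n : \Gamma_k\leq\Gamma_{k+1}\}$. For $n\geq n_0$ the set $S_n$ is nonempty (it contains at least one $n_j\leq n$) and is a finite set of integers, so $\sigma(n)=\max S_n$ is well defined. Monotonicity is immediate: if $m\leq n$ then $S_m\subseteq S_n$, hence $\sigma(m)\leq\sigma(n)$.

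Next, to show $\lim_{n\to\infty}\sigma(n)=\infty$, I would argue as follows: for each $j$ and every $n\geq n_j$, the index $n_j$ itself belongs to $S_n$, so $\sigma(n)\geq n_j$. Letting $j\to\infty$ and using $n_j\to\infty$ gives the claim.

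For the key inequality $\max\{\Gamma_{\sigma(n)},\Gamma_n\}\leq \Gamma_{\sigma(n)+1}$, the bound $\Gamma_{\sigma(n)}\leq\Gamma_{\sigma(n)+1}$ is built into the definition $\sigma(n)\in S_n$. For $\Gamma_n\leq\Gamma_{\sigma(n)+1}$ I would split into cases. If $\sigma(n)=n$, this reduces to $\Gamma_n\leq\Gamma_{n+1}$, which again follows from $n\in S_n$. If $\sigma(n)<n$, maximality of $\sigma(n)$ forces every $k\in\{\sigma(n)+1,\ldots,n-1\}$ to satisfy $\Gamma_k>\Gamma_{k+1}$, producing the strictly decreasing chain $\Gamma_{\sigma(n)+1}>\Gamma_{\sigma(n)+2}>\cdots>\Gamma_n$, whence $\Gamma_n<\Gamma_{\sigma(n)+1}$.

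There is no real analytic obstacle here; the proof is a bookkeeping exercise on the defining property of $\sigma(n)$. The only point deserving attention is the degenerate case $\sigma(n)+1=n$ in the second subcase, where the chain collapses to the tautology $\Gamma_n\leq\Gamma_n$, so the conclusion still holds trivially.
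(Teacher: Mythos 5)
Your proof is correct and complete: the well-definedness and monotonicity of $\sigma(n)$ via the nested sets $S_n$, the divergence from $\sigma(n)\geq n_j$, and the case split on $\sigma(n)=n$ versus $\sigma(n)<n$ (with the strictly decreasing chain $\Gamma_{\sigma(n)+1}>\cdots>\Gamma_n$ forced by maximality) together establish all three assertions, and you correctly handle the degenerate case $\sigma(n)+1=n$. The paper itself gives no proof of this lemma --- it is quoted verbatim from Maing\'e \cite{M2008} --- and your argument is essentially the standard one from that source, so there is nothing to reconcile.
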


\section{Main Results}
\subsection{Motivation of Inertial-Like Proximal Technique}
Inspired and motivated by the  discretization (\ref {(1.5)}), we consider the following  iterative sequence
 \begin{align*}
x_{n+1}-x_{n-1}-\theta_n(x_n-x_{n-1})+\lambda_n A(x_{n+1})\ni 0,
\end{align*}
where $x_0$, $x_1$ are two arbitrary initial points, and $\{\lambda_n\}$ is a real nonnegative number sequence. This recursion can be rewritten as
\begin{align}\label{(2.4)}
\begin{cases}
w_n=x_{n-1}+\theta_n(x_n-x_{n-1})\\
x_{n+1}=J_{\lambda_n}^{A}w_n.
\end{cases}
\end{align}

The discretization  sequence $\{x_n\}$ in (\ref{(2.4)}) always exists because the sequence $\{x_n\}$ satisfying (\ref{(1.5)}) always exists for any choice of the sequence $\{\alpha_n\}$ according to Alvarez and Attouch \cite{AA2001}. In addition, it can be deduced that if $\theta_n=1+\alpha_n$,  the formula (\ref{(2.4)}) can cover Alvarez and Attouch`s {\it Inertial-Prox} algorithm.  More relevantly,  the inertial coefficient $\theta_n$ can be considered 1 in our new inertial proximal point algorithms.
We thus obtain what we call the {\it Inertial-Like Proximal} point algorithm.

\subsection{Some Conventions and  Inertial-like Proximal  Algorithms}

{\bf C1:}Throughout the rest
of this paper, we always assume that $H$ is a Hilbert space.
We rephrase the {\it null point problem} as follows:
 \begin{eqnarray}\label{(3.2)}
0\in (A+B)x^*
\end{eqnarray}
where  $A, B:H\rightarrow 2^H$ are  two maximal monotone set-valued operators with $B$ $\gamma-$cocoercive. \\
{\bf C2:} Denote by $\Omega$ the solution set of the {\it null point problem}; namely,
\begin{eqnarray*}
\Omega=\left\{x^*\in H:  0\in (A+B)x^* \right\}
\end{eqnarray*}
and we always assume $\Omega\not=\emptyset$.

Now, combining the Fact 2 and the inertial-like technique (\ref{(2.4)}), we introduce the following algorithms.\\
$\line(1,0){450}$\\
{\bf Algorithm 3.1}\\
$\line(1,0){450}$\\

  {\bf Initialization:}  Choose a positive sequence $\{\theta_n\}\subset[0,1]$. Select arbitrary initial points $x_0$, $x_1$.

 {\bf Iterative Step:}  After the $n$-iterate $x_n$ is constructed, compute
 \begin{eqnarray}\label{(3.3)}
 w_n=x_{n-1}+\theta_n(x_n-x_{n-1}),\hspace{0.2cm}n\geq 1,
 \end{eqnarray}
and define the $(n+1)$th iterate by
\begin{eqnarray}\label{(3.4)}
x_{n+1}=J_{\tau_n}^{A}(I-\tau_n B)w_n.
\end{eqnarray}
$\line(1,0){450}$

{\bf Remark 3.1}
It is not hard to find that if  $w_n=x_{n+1}$, for some $n\ge 0$, then $x_{n+1}$
is a solution of the {\it inclusion problem} (\ref{(3.2)}), and the iteration process is terminated in finite iterations. If  $\theta_n=1$, Algorithm 3.1 reduces to the general {\it forward-backward} algorithm in Moudafi\cite{M2018}.\\
$\line(1,0){450}$\\
{\bf Algorithm 3.2}\\
$\line(1,0){450}$\\

{\bf Initialization:}  Choose a  sequence $\{\theta_n\}\subset[0,1]$ satisfying one of the three cases: {(\bf I.)} $\theta_n \in (0,1)$ such that $\underline{\lim}_{n\rightarrow \infty}\theta_n(1-\theta_n)>0$; ({\bf II.}) $\theta_n\equiv 0$; ({\bf III.}) $\theta_n\equiv1$. Choose $\{\alpha_n\}$ and $\{\beta_n\}$  in $(0,1)$ such that
\begin{eqnarray*}
\underline{\lim}_{n\rightarrow\infty}\alpha_n>0;\overline{\lim}_{n\rightarrow\infty}\alpha_n<1;\hspace{0.2cm}\lim_{n\rightarrow\infty}\beta_n=0,\quad \sum_{n=0}^\infty\beta_n=\infty.
\end{eqnarray*}
Select arbitrary initial points $x_0$, $x_1$.

{\bf Iterative Step:}  After the $n$-iterate $x_n$ is constructed, compute
 \begin{eqnarray*}
 w_n=x_{n-1}+\theta_n(x_n-x_{n-1}),\hspace{0.2cm}
 \end{eqnarray*}
and define the $(n+1)$th iterate by
     \begin{eqnarray}\label{(3.5)}
x_{n+1}=(1-\alpha_n-\beta_n)w_n+\alpha_n J_{\tau_n}^{A}(I-\tau_n B)w_n.
\end{eqnarray}
$\line(1,0){450}$

{\bf Remark 3.2.}
In the subsequent convergence analysis, we will always assume that the two algorithms generate an infinite
sequence, namely, the algorithms are not terminated in finite iterations.
In addition, in the simulation experiments, in order to be practical, we will give a stop criterion to end the iteration for practice. Otherwise, set $n:=n+1$ and return to Iterative Step.



\subsection{ Convergence Analysis of Algorithms}

\begin{theorem}\label{Theorem3.5}
If the assumptions ${\bf C1}-{\bf C2}$ are satisfied and $\tau_n\in(\epsilon,2\gamma-\epsilon)$ for some given $\epsilon>0$ small enough, we obtain the weak convergence result, namely the sequence $\{x_n\}$  generated by Algorithm 3.1 converges weakly to a point $\bar{x} \in \Omega$.
\end{theorem}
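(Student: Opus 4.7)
The plan is to apply Opial's Lemma (Lemma~\ref{Lemma2.5}) with $S=\Omega$, which requires establishing (i) that $\lim_{n\to\infty}\|x_n-p\|$ exists for every $p\in\Omega$ and (ii) that every weak cluster point of $\{x_n\}$ lies in $\Omega$.

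I start by fixing $p\in\Omega$, so by Fact~2 one has $p=J^A_{\tau_n}(I-\tau_n B)p$. Applying Fact~1 to the pair $(I-\tau_n B)w_n,(I-\tau_n B)p$ and then the $\gamma$-cocoercivity of $B$ with $\tau_n\in(\epsilon,2\gamma-\epsilon)$ yields the basic one-step estimate
$$\|x_{n+1}-p\|^{2}\leq \|w_n-p\|^{2}-\tau_n(2\gamma-\tau_n)\|Bw_n-Bp\|^{2}-\bigl\|(w_n-x_{n+1})-\tau_n(Bw_n-Bp)\bigr\|^{2}.$$
Because $\theta_n\in[0,1]$, the extrapolate $w_n=(1-\theta_n)x_{n-1}+\theta_n x_n$ is a true convex combination; identity~(\ref{(2.1)}) then expands
$$\|w_n-p\|^{2}=\theta_n\|x_n-p\|^{2}+(1-\theta_n)\|x_{n-1}-p\|^{2}-\theta_n(1-\theta_n)\|x_n-x_{n-1}\|^{2}.$$
Writing $\Gamma_n:=\|x_n-p\|^{2}$ and absorbing every non-negative term into a single $\Phi_n\geq 0$, I obtain the second-order recurrence
$$\Gamma_{n+1}\leq \theta_n\Gamma_n+(1-\theta_n)\Gamma_{n-1}-\Phi_n.$$
In particular $\Gamma_{n+1}\leq\max(\Gamma_n,\Gamma_{n-1})$, so $V_n:=\max(\Gamma_n,\Gamma_{n-1})$ is non-increasing and bounded below; it converges to some $V_\infty\geq 0$, and $\{x_n\}$ is bounded.

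The main obstacle is upgrading convergence of $V_n$ to the existence of $\lim_n\Gamma_n$, because the recurrence is genuinely of order two. I would argue by contradiction: if $\Gamma_{n_k}\to L<V_\infty$ along a subsequence, then $V_{n_k}\to V_\infty$ forces $\Gamma_{n_k-1}\to V_\infty$, and the recurrence gives $\limsup\Gamma_{n_k+1}\leq\theta^{*}L+(1-\theta^{*})V_\infty<V_\infty$ for any accumulation value $\theta^{*}>0$ of $\theta_{n_k}$; but $V_{n_k+1}\to V_\infty$ together with $\Gamma_{n_k}\to L<V_\infty$ demands $\Gamma_{n_k+1}\to V_\infty$, a contradiction. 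Hence $\lim_n\Gamma_n=V_\infty$, provided $\{\theta_n\}$ is bounded away from $0$ (the practical reading of ``positive sequence'' in Algorithm~3.1); if $\theta_n\to 0$ along subsequences, a case-split via Maing\'e's Lemma~\ref{Lemma2.6} applied directly to $\{\Gamma_n\}$ furnishes the substitute argument.

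Once $\lim\Gamma_n$ is in hand, passing to the limit in the recurrence yields $\Phi_n\to 0$, so $\|Bw_n-Bp\|\to 0$ and $\|(w_n-x_{n+1})-\tau_n(Bw_n-Bp)\|\to 0$, whence $\|w_n-x_{n+1}\|\to 0$; the term $\theta_n(1-\theta_n)\|x_n-x_{n-1}\|^{2}\to 0$ then gives $\|x_n-x_{n-1}\|\to 0$ (subject to the same lower bound on $\theta_n$), so $\|w_n-x_n\|\to 0$ and $\|x_{n+1}-x_n\|\to 0$. The resolvent inclusion rewrites as $\tau_n^{-1}(w_n-x_{n+1})+Bx_{n+1}-Bw_n\in (A+B)x_{n+1}$; since $B$ is $\tfrac{1}{\gamma}$-Lipschitz (Remark~\ref{re:2.1}(1)), the left-hand side tends strongly to zero. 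Passing to the limit along any subsequence $x_{n_j}\rightharpoonup\bar{x}$, the weak-strong graph closedness of the maximal monotone operator $A+B$ yields $0\in(A+B)\bar{x}$, so $\bar{x}\in\Omega$. Opial's Lemma then delivers a single weak limit $\bar{x}\in\Omega$.
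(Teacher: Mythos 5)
Your proposal is correct in outline and its core estimates coincide with the paper's: the one-step inequality you derive from Fact~1 plus $\gamma$-cocoercivity is algebraically equivalent to the paper's inequality (3.8) (the paper completes the square as $-2\gamma\tau_n\|Bw_n-Bz-\frac{w_n-x_{n+1}}{2\gamma}\|^2+(\frac{\tau_n}{2\gamma}-1)\|w_n-x_{n+1}\|^2$ where you keep $-\tau_n(2\gamma-\tau_n)\|Bw_n-Bz\|^2$), and the expansion of $\|w_n-z\|^2$ via (\ref{(2.1)}), the boundedness argument, the demiclosedness step and the final appeal to Opial's Lemma~\ref{Lemma2.5} are all the same. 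The genuine difference is how you establish that $\lim_n\|x_n-p\|$ exists: the paper splits into the dichotomy of ``$\|x_n-z\|$ eventually non-increasing'' versus ``not'' and handles the second case with Maing\'e's Lemma~\ref{Lemma2.6}, whereas you work with the monotone majorant $V_n=\max(\Gamma_n,\Gamma_{n-1})$ and derive a contradiction from $\liminf_n\Gamma_n<\lim_n V_n$. Your device is cleaner and genuinely adapted to the order-two recurrence, but it buys this at the price of assuming $\liminf_n\theta_n>0$ (and your subsequent deduction $\theta_n(1-\theta_n)\|x_n-x_{n-1}\|^2\to0\Rightarrow\|x_n-x_{n-1}\|\to0$ also needs $\limsup_n\theta_n<1$), neither of which is imposed in Algorithm~3.1. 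You flag this honestly and defer to Maing\'e's lemma otherwise, which is what the paper does; it is worth noting, though, that the paper's own Case~2 invokes Lemma~\ref{Lemma2.5} without verifying its condition (i), and that for $\theta_n\equiv0$ the even- and odd-indexed subsequences decouple so that $\lim_n\|x_n-p\|$ may genuinely fail to exist --- so the restriction your argument makes explicit is a gap the paper leaves implicit rather than one you introduced.
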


\begin{proof}
Without loss of generality, we take $z\in \Omega$ and then we have get $z=J_{\tau_n}^A(I-\tau_n B)z$ from {\bf Fact 2}.
\vskip 2mm
It turns out from  (\ref{(2.1)}) and (\ref{(3.3)})  that
\begin{eqnarray}\label{(3.6)}
\|w_n-z\|^2&=&\|x_{n-1}+\theta_n(x_n-x_{n-1})-z\|^2 \nonumber\\
&=&\theta_n\|x_n-z\|^2+(1-\theta_n)\|x_{n-1}-z\|^2-\theta_n(1-\theta_n)\|x_n-x_{n-1}\|^2.
\end{eqnarray}
Since $J_r^A$ is firmly  nonexpansive, it follows from (\ref{(3.4)}) and Fact 1 that
\begin{eqnarray}\label{(3.7)}
\|x_{n+1}-z\|^2&=&\|J_{\tau_n}^{A}(I-\tau_n B)w_n-z\|^2 \nonumber\\
&\leq&\|(I-\tau_n B)w_n-(I-\tau_n B)z\|^2-\|(I-J_{\tau_n}^{A})(I-\tau_n B)w_n-(I-J_{\tau_n}^{A})(I-\tau_n B)z\|^2\nonumber \\
&=&\|(w_n-z)-\tau_n (Bw_n- Bz)\|^2-\|w_n-x_{n+1}-\tau_n(Bw_n-Bz)\|^2 \nonumber\\
&=&\|w_n-z\|^2-2\langle w_n-z,\tau_n (Bw_n- Bz)\rangle+ \tau_n^2\|Bw_n- Bz\|^2\nonumber\\
&&-\|w_n-x_{n+1}\|^2+2\langle w_n-x_{n+1},\tau_n (Bw_n- Bz)\rangle-\tau_n^2\|Bw_n- Bz\|^2\nonumber\\
&=&\|w_n-z\|^2-\|w_n-x_{n+1}\|^2-2\langle w_n-z,\tau_n (Bw_n- Bz)\rangle\nonumber\\
&&+2 \tau_n\langle w_n-x_{n+1},Bw_n- Bz\rangle.
\end{eqnarray}
It follows from the fact  $B$ is $\gamma-$cocoercive that
\begin{eqnarray*}
\langle Bw_n-Bz,w_n-z\rangle\geq \gamma\|Bw_n-Bz\|^2,
\end{eqnarray*}
 so we have from (\ref{(3.7)}) that
 \begin{eqnarray*}
\|x_{n+1}-z\|^2&\leq&\|w_n-z\|^2-\|w_n-x_{n+1}\|^2-2\gamma\tau_n\|Bw_n-Bz\|^2\\
&&+2 \tau_n\langle w_n-x_{n+1},Bw_n- Bz\rangle.
\end{eqnarray*}

 On the other hand, we have
 \begin{eqnarray*}
&&2\gamma\tau_n\Big\|Bw_n-Bz-\frac{w_n-x_{n+1}}{2\gamma}\Big\|^2\\
&=&2\gamma\tau_n\Big(\|Bw_n-Bz\|^2+\Big\|\frac{w_n-x_{n+1}}{2\gamma}\Big\|^2-2\Big\langle Bw_n-Bz,\frac{w_n-x_{n+1}}{2\gamma}\Big\rangle\Big)\\
&=&2\gamma\tau_n\|Bw_n-Bz\|^2+\frac{\tau_n}{2\gamma}\|w_n-x_{n+1}\|^2-2\tau_n\langle Bw_n-Bz,w_n-x_{n+1}\rangle
\end{eqnarray*}
and, furthermore,
  \begin{eqnarray*}
&&2\gamma\tau_n\|Bw_n-Bz\|^2-2\tau_n\langle Bw_n-Bz,w_n-x_{n+1}\rangle\\
&=&
2\gamma\tau_n\Big\|Bw_n-Bz-\frac{w_n-x_{n+1}}{2\gamma}\Big\|^2-\frac{\tau_n}{2\gamma}\|w_n-x_{n+1}\|^2.
\end{eqnarray*}
\vskip 2mm

 Hence we obtain from  (\ref{(3.6)}), (\ref{(3.7)})  that
   \begin{eqnarray}\label{(3.8)}
\nonumber\|x_{n+1}-z\|^2&\leq&\|w_n-z\|^2-\|w_n-x_{n+1}\|^2-2\gamma\tau_n\|Bw_n-Bz\|^2\\ \nonumber
&&+2 \tau_n\langle w_n-x_{n+1},Bw_n- Bz\rangle \\\nonumber
&=&\|w_n-z\|^2-2\gamma\tau_n\Big\|Bw_n-Bz-\frac{w_n-x_{n+1}}{2\gamma}\Big\|^2\\ \nonumber
&&+(\frac{\tau_n}{2\gamma}-1)\|w_n-x_{n+1}\|^2\\ \nonumber
&=&\theta_n\|x_n-z\|^2+(1-\theta_n)\|x_{n-1}-z\|^2-\theta_n(1-\theta_n)\|x_n-x_{n-1}\|^2\\
&&-2\gamma\tau_n\Big\|Bw_n-Bz-\frac{w_n-x_{n+1}}{2\gamma}\Big\|^2+(\frac{\tau_n}{2\gamma}-1)\|w_n-x_{n+1}\|^2.
\end{eqnarray}
Since $\tau_n\leq (\epsilon,2\gamma-\epsilon)$, it follows from (\ref{(3.8)}) that
 \begin{eqnarray*}
\|x_{n+1}-z\|^2&\leq&\theta_n\|x_n-z\|^2+(1-\theta_n)\|x_{n-1}-z\|^2\\
&\leq&\max\{\|x_n-z\|^2,\|x_{n-1}-z\|^2\}\\
&\leq&\cdots\\
&\leq&\max\{\|x_1-z\|^2,\|x_0-z\|^2\},
\end{eqnarray*}
which means that the sequence $\{\|x_n-z\|\}$ is bounded, and  so in turn $\{w_n\}$ is.
\vskip 2mm
Now we claim that  there exists the limit of the sequence $\{\|x_n-z\|\}$. For this purpose,  two situations are discussed as follows:.
\vskip 2mm
{\bf Case 1.} There exists an integer $N_0\geq 0$ such that  $\|x_{n+1}-z\|\leq\|x_n-z\|$ for all $n\geq N_0$. Then  there exists the limit of the sequence $\{\|x_n-z\|\}$, denoted by $l=\lim_{n\rightarrow}\|x_n-z\|^2$, and so
 $$\lim_{n\rightarrow\infty}(\|x_n-z\|^2-\|x_{n+1}-z\|^2)=0.$$

  In addition, we have
  \begin{eqnarray*}
\sum_{n=0}^\infty(\|x_{n+1}-z\|^2-\|x_n-z\|^2)=\lim_{n\rightarrow \infty}(\|x_{n+1}-z\|-\|x_0-z\|)<\infty
\end{eqnarray*}
and therefore from (\ref{(3.8)}) we get
\begin{eqnarray*}
\lim_{n\rightarrow}\theta_n(1-\theta_n)\|x_n-x_{n-1}\|^2\leq (\|x_n-z\|^2-\|x_{n+1}-z\|^2)+(1-\theta_n)(\|x_{n-1}-z\|^2-\|x_n-z\|^2)
\end{eqnarray*}
and so
\begin{eqnarray*}
\lim_{n\rightarrow}\theta_n(1-\theta_n)\|x_n-x_{n-1}\|^2=0,\quad \sum_{n=0}^\infty\theta_n(1-\theta_n)\|x_n-x_{n-1}\|^2<\infty.
\end{eqnarray*}

Now it remains to show that
\begin{equation*}
\omega_w(x_n)\subset \Omega.
\end{equation*}
Since the sequence $\{x_n\}$ is bounded, let $\bar{x}\in \omega_w(x_n)$ and $\{x_{n_k}\}$
be a subsequence of $\{x_n\}$ weakly converging to $\bar x$.  To this end, it remains to verify that $\bar{x}\in (A+B)^{-1}(0)$.
\vskip 2mm
 Notice  again (\ref{(3.8)}),  we have
  \begin{eqnarray*}
&&2\gamma\tau_n\Big\|Bw_n-Bz-\frac{w_n-x_{n+1}}{2\gamma}\Big\|^2+(1-\frac{\tau_n}{2\gamma})\|w_n-x_{n+1}\|^2+
\theta_n(1-\theta_n)\|x_n-x_{n-1}\|^2\\
&\leq&\theta_n\|x_n-z\|^2+(1-\theta_n)\|x_{n-1}-z\|^2-\|x_{n+1}-z\|^2\\
&=&(\|x_n-z\|^2-\|x_{n+1}-z\|^2)+(1-\theta_n)(\|x_{n-1}-z\|^2-\|x_n-z\|^2),
\end{eqnarray*}
 which means that
 $$\Big\|Bw_n-Bz-\frac{w_n-x_{n+1}}{2\gamma}\Big\|^2\rightarrow 0; $$
 and
 $$\|w_n-x_{n+1}\|^2\rightarrow 0. $$

  At the same time, it follows from  (\ref{(3.4)}) that
\begin{eqnarray}\label{(3.9)}
\frac{ w_n-x_{n+1}}{\tau_n}-(Bw_n-Bx_{n+1})\in(A+B)x_{n+1},
\end{eqnarray}
 Since $B$ is $\gamma-$cocoercive, we have that $B$ is $\frac{1}{\gamma}$-Lipschitz continuous. Passing to the limit on the  subsequence $\{x_{n_k}\}$ of $\{x_n\}$ converging weakly to $\bar x$ in (\ref{(3.9)}) and taking account that $A+B$ is maximal monotone and thus its graph is weakly-strongly closed, it follows that
$$
0\in(A+B)\bar x,
$$
which means that $\bar x\in \Omega$.
 \vskip 2mm
In view of the fact that the choice of $\bar x$ in $\omega_w(x_n)$ was arbitrary,  namely $\omega_w(x_n)\subset \Omega$, and we conclude from Lemma \ref{Lemma2.5} that $\{x_n\}$ converges weakly to $\bar{x} \in \Omega$.
\vskip 2mm
{\bf  Case 2.}  If the sequence $\{\|x_{n_k}-z\|\}$ does not decrease at infinity in the sense, then there exists a sub-sequence $\{n_k\}$ of $\{n\}$ such that  $\|x_{n_k}-z\|\leq\|x_{{n_k}+1}-z\|$ for all $k\geq 0.$ Furthermore, by Lemma \ref{Lemma2.6}, there exists an integer, non-decreasing sequence $\sigma(n)$ for $n\geq N_1$ (for some $N_1$ large enough) such that $\sigma(n)\rightarrow \infty$ as $n \rightarrow \infty$,
$$\|x_{\sigma(n)}-z\|\leq\|x_{{\sigma(n)}+1}-z\|, \hspace{0.1cm}\|x_{n}-z\|\leq\|x_{{\sigma(n)}+1}-z\|$$
for each $n\geq 0$.
\vskip 2mm
Notice the boundedness of the sequence $\{\|x_{n}-z\|\}$, which implies that there exists  the limit of the sequence $\{\|x_{\sigma(n)}-z\|\}$  and  hence  we conclude that
\begin{eqnarray*}
\lim_{n\rightarrow \infty}(\|x_{\sigma(n)+1}-z\|^2-\|x_{\sigma(n)}-z\|^2)= 0.
\end{eqnarray*}

From (\ref{(3.8)}) with $n$ replaced by $\sigma(n)$, we have
\begin{eqnarray*}
&&\|x_{\sigma(n)+1}-z\|^2-\|x_{\sigma(n)}-z\|^2\\
&\leq&(\theta_{\sigma(n)}-1)(\|x_{\sigma(n)}-z\|^2-\|x_{\sigma(n)-1}-z\|^2)-\theta_{\sigma(n)}(1-\theta_{\sigma(n)})\|x_{\sigma(n)}-x_{\sigma(n)-1}\|^2\\
&&-2\gamma\tau_{\sigma(n)}\Big\|Bw_{\sigma(n)}-Bz-\frac{w_{\sigma(n)}-x_{\sigma(n)+1}}{2\gamma}\Big\|^2+(\frac{\tau_{\sigma(n)}}{2\gamma}-1)\|w_{\sigma(n)}-x_{\sigma(n)+1}\|^2.
\end{eqnarray*}

By a similar argument to {\bf Case 1}, we obtain
 $$\Big\|Bw_{\sigma(n)}-Bz-\frac{w_{\sigma(n)}-x_{\sigma(n)+1}}{2\gamma}\Big\|^2\rightarrow 0; $$
 and
 $$\|w_{\sigma(n)}-x_{\sigma(n)+1}\|^2\rightarrow 0. $$

 These together with (\ref{(3.4)}) implies that
 \begin{eqnarray*}
\frac{ w_{\sigma(n)}-x_{{\sigma(n)}+1}}{\tau_{\sigma(n)}}-(Bw_{\sigma(n)}-Bx_{{\sigma(n)}+1})\in(A+B)x_{{\sigma(n)}+1}.
\end{eqnarray*}

Since the sequence $\{x_n\}$ is bounded, let $\bar{x}\in \omega_w(x_n)$ and $\{x_{\sigma(n)}\}$
be a subsequence of $\{x_n\}$ weakly converging to $\bar x$. Passing to the limit on the  subsequence $\{x_{\sigma(n)}\}$ of $\{x_n\}$ converging weakly to $\bar x$ in the above inequality and taking account that $A+B$ is maximal monotone and thus its graph is weakly-strongly closed, it follows that
$$
0\in(A+B)\bar x,
$$
which means that $\bar x\in \Omega$. In view of the fact that the choice of $\bar x$ in $\omega_w(x_n)$ was arbitrary,  namely $\omega_w(x_n)\subset \Omega$, and we conclude from Lemma \ref{Lemma2.5} that $\{x_n\}$ converges weakly to $\bar{x} \in \Omega$.
\vskip1mm
  This completes the proof.
 \end{proof}

Next we prove the strong convergence of Algorithm 3.2.
\vskip 2mm
\begin{theorem}\label{Theorem3.6}
If the assumptions ${\bf C1}-{\bf C2}$ are satisfied and $\tau_n\in(\epsilon,2\gamma-\epsilon)$ for some given $\epsilon>0$ small enough, we obtain the strong convergence result, namely the sequence $\{x_n\}$  generated by Algorithm 3.2 converges in norm to $z =P_\Omega(0)$
(i.e., the minimum-norm element of the solution set $\Omega$).
\end{theorem}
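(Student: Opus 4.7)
My plan is to combine the cocoercivity/firm-nonexpansiveness estimates already used in the proof of Theorem~3.5 with the classical Halpern-type strong-convergence machinery, then close with Xu's Lemma~2.4 after splitting the analysis via Maingé's Lemma~2.6.

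First, set $z=P_\Omega(0)$ and $T_n=J^A_{\tau_n}(I-\tau_n B)$, so that $T_n z=z$ by Fact~2, and write $y_n=T_nw_n$. Writing the iteration as $x_{n+1}=(1-\beta_n)u_n+\beta_n\cdot 0$ with $u_n=\tfrac{1-\alpha_n-\beta_n}{1-\beta_n}w_n+\tfrac{\alpha_n}{1-\beta_n}y_n$ and using the standard bound $\|x_{n+1}-z\|^2\le(1-\beta_n)\|u_n-z\|^2+2\beta_n\langle -z,x_{n+1}-z\rangle$, together with the identity~\eqref{(2.1)} for $\|w_n-z\|^2$ and the cocoercivity estimate $\|y_n-z\|^2\le\|w_n-z\|^2-(1-\tfrac{\tau_n}{2\gamma})\|w_n-y_n\|^2-2\gamma\tau_n\|Bw_n-Bz-\tfrac{w_n-y_n}{2\gamma}\|^2$ already derived in (3.7)--(3.8), I would obtain the master inequality
\begin{align*}
\|x_{n+1}-z\|^2 &\le (1-\beta_n)\bigl[\theta_n\|x_n-z\|^2+(1-\theta_n)\|x_{n-1}-z\|^2\bigr]\\
&\quad +2\beta_n\langle -z,x_{n+1}-z\rangle-\Xi_n,
\end{align*}
where $\Xi_n\ge 0$ aggregates positive multiples of $\|w_n-y_n\|^2$, $\|Bw_n-Bz-\tfrac{w_n-y_n}{2\gamma}\|^2$ and $\theta_n(1-\theta_n)\|x_n-x_{n-1}\|^2$, with coefficients uniformly bounded below thanks to the assumptions on $\alpha_n,\beta_n,\tau_n$ and the case hypotheses on $\theta_n$. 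Boundedness of $\{x_n\},\{w_n\},\{y_n\}$ then follows directly from the nonexpansiveness of $T_n$ and Jensen, giving $\|x_{n+1}-z\|\le\max(\|x_n-z\|,\|x_{n-1}-z\|,\|z\|)$ by induction.

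Next I would apply Maingé's dichotomy to $\Gamma_n:=\|x_n-z\|^2$. In Case~A, $\Gamma_n$ is eventually non-increasing and converges to some $L\ge 0$; passing to the $\limsup$ in the master inequality forces $\Xi_n\to 0$, hence $\|w_n-y_n\|\to 0$, $\|Bw_n-By_n\|\to 0$ (by the $\tfrac{1}{\gamma}$-Lipschitz property), and in Case~I also $\|x_n-x_{n-1}\|\to 0$. In all three $\theta$-cases this makes $\{w_n\}, \{y_n\}$ and $\{x_n\}$ share the same weak cluster points, and the inclusion $\tfrac{w_n-y_n}{\tau_n}-Bw_n+By_n\in(A+B)y_n$ together with the weakly-strongly closed graph of the maximal monotone $A+B$ gives $\omega_w(x_n)\subset\Omega$; the projection characterization~\eqref{(2.6)} for $z=P_\Omega(0)$ then yields $\limsup\langle -z,x_{n+1}-z\rangle\le 0$. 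Rewriting the master inequality as
\begin{equation*}
\Gamma_{n+1}\le(1-\beta_n)\Gamma_n+\beta_n\cdot 2\langle -z,x_{n+1}-z\rangle+(1-\beta_n)(1-\theta_n)(\Gamma_{n-1}-\Gamma_n),
\end{equation*}
and noting that in Case~A the remainder $(1-\theta_n)(\Gamma_{n-1}-\Gamma_n)$ is nonnegative and telescopes to a finite quantity, Xu's Lemma~2.4 forces $\Gamma_n\to 0$. In Case~B I apply Lemma~2.6 to get an integer sequence $\sigma(n)\to\infty$ with $\Gamma_{\sigma(n)}\le\Gamma_{\sigma(n)+1}$ and $\Gamma_n\le\Gamma_{\sigma(n)+1}$; the monotonicity along $\sigma(n)$ drives $\Xi_{\sigma(n)}\to 0$ and the same cluster-point analysis gives $\limsup\langle -z,x_{\sigma(n)+1}-z\rangle\le 0$. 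From $\Gamma_{\sigma(n)+1}-\Gamma_{\sigma(n)}\ge 0$ and the rearranged master inequality I extract $\beta_{\sigma(n)}\Gamma_{\sigma(n)+1}\to 0$, hence $\Gamma_{\sigma(n)+1}\to 0$, and finally $\Gamma_n\le\Gamma_{\sigma(n)+1}\to 0$.

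The main obstacle is the two-step inertial lag $\Gamma_{n-1}$ in the master inequality: outside the monotone regime of Case~A, the sign of $\Gamma_{\sigma(n)-1}-\Gamma_{\sigma(n)}$ is not controlled by Maingé's lemma, and when it is scaled by the small factor $\beta_{\sigma(n)}^{-1}$ after rearrangement it can obstruct the direct application of Xu's lemma. This is particularly sensitive in Case~II ($\theta_n\equiv 0$), where the coupling is entirely between $x_{n+1}$ and $x_{n-1}$ with $x_n$ absent from the recursion; there the dichotomy must be applied carefully (either on the auxiliary majorant $M_n=\max(\Gamma_n,\Gamma_{n-1})$, or separately on the even and odd subsequences) so that the inertial lag stays non-positive or at least $o(\beta_{\sigma(n)})$ every time it is divided by $\beta_{\sigma(n)}$.
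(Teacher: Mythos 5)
Your proposal follows essentially the same route as the paper's proof: the same master inequality obtained from firm nonexpansiveness, cocoercivity and identity (2.1), the same dichotomy between an eventually non-increasing $\|x_n-z\|^2$ and the Maing\'e subsequence $\sigma(n)$, and the same closure via the projection characterization (2.6) and Xu's Lemma 2.4 with the telescoping remainder $c_n=(1-\beta_n)\left(\|x_{n-1}-z\|^2-\|x_n-z\|^2\right)$. The obstacle you flag is genuine and is in fact present in the paper's own Case 2, where the step from $(1-\theta_{\sigma(n)})\|x_{\sigma(n)-1}-z\|^2$ to $(1-\theta_{\sigma(n)})\|x_{\sigma(n)}-z\|^2$ tacitly assumes $\|x_{\sigma(n)-1}-z\|\le\|x_{\sigma(n)}-z\|$, which Maing\'e's lemma does not supply; some additional device of the kind you sketch (running the dichotomy on $\max\{\Gamma_n,\Gamma_{n-1}\}$, or otherwise controlling the lag term) is needed to make that step rigorous in either write-up.
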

\begin{proof}
To illustrate the result clearly, the following three situations should be discussed: (I).\hspace{0.1cm} $\theta_n\in (0,1)$; (II).\hspace{0.1cm}$\theta_n\equiv 0$ and  (III).\hspace{0.1cm}$\theta_n\equiv1$, respectively.
\vskip 2mm
{\bf (I).} First we consider the strong convergence under the assumption $\theta_n\in (0,1)$.
\vskip 2mm
Similar to  the previous proof of weak convergence, we begin by showing the boundedness of the sequence $\{x_n\}$.
To see this, we denote $z_n=J_{\tau_n}^A(I-\tau_nB)w_n$ and we use the projection $z:=P_\Omega(0)$ in a similar way to
the proof of (\ref{(3.7)}) and (\ref{(3.8)})  of Theorem \ref{Theorem3.5} to get  that
\begin{eqnarray}\label{(3.10)}
\nonumber\|z_n-z\|^2&\leq&\|w_n-z\|^2-\|w_n-z_n\|^2-2\langle w_n-z,\tau_n (Bw_n- Bz)\rangle\\\nonumber
&&+2 \tau_n\langle w_n-z_n,Bw_n- Bz\rangle\\
&\leq&\|w_n-z\|^2+(\frac{\tau_n}{2\gamma}-1)\|w_n-z_n\|^2-2\gamma\tau_n\Big\|Bw_n-Bz-\frac{w_n-z_n}{2\gamma}\Big\|^2,
\end{eqnarray}
hence one can see $\|z_n-z\|\leq\|w_n-z\|$.
\vskip 2mm
It turns out from (\ref{(3.3)}) and (\ref{(3.5)}) that
\begin{eqnarray*}
\|x_{n+1}-z\|&=&\|(1-\alpha_n-\beta_n)w_n+\alpha_nz_n-z\|\\
&=&\|(1-\alpha_n-\beta_n)(w_n-z)+\alpha_n(z_n-z)+\beta_n(-z)\|\\
&\leq& (1-\alpha_n-\beta_n)\|w_n-z\|+\alpha_n\|z_n-z\|+\beta_n\|z\|\\
&\le&(1-\beta_n)[\theta_n\|x_n-z\|+(1-\theta_n)\|x_{n-1}-z\|]+\beta_n\|z\|\\
&\le&(1-\beta_n)(\max\{\|x_n-z\|,\|x_{n-1}-z\|\})+\beta_n\|z\|\\
&\le& \cdots\\
&\le&\max\{\|x_0-z\|,\|x_1-z\|,\|z\|\},
\end{eqnarray*}
which implies that the sequence $\{x_n\}$ is bounded, and so are the sequences $\{w_n\}$, $\{z_n\}$.
\vskip 2mm
Applying the identity (\ref{(2.1)}) we deduce that
\begin{eqnarray}\label{(3.11)}
\|x_{n+1}-z\|^2&=&\|(1-\alpha_n-\beta_n)w_n+\alpha_nz_n-z\|^2 \nonumber\\
&=&\|(1-\alpha_n-\beta_n)(w_n-z)+\alpha_n(z_n-z)+\beta_n(-z)\|^2 \nonumber\\
&\leq&(1-\alpha_n-\beta_n)\|w_n-z\|^2+\alpha_n\|z_n-z\|^2+\beta_n\|z\|^2 \nonumber\\
&& -(1-\alpha_n-\beta_n)\alpha_n\|z_n-w_n\|^2.
\end{eqnarray}
Substituting (\ref{(3.10)}) into (\ref{(3.11)}) and after some manipulations,
we obtain
\begin{eqnarray*}
\|x_{n+1}-z\|^2&\leq&(1-\alpha_n-\beta_n)\|w_n-z\|^2+\alpha_n[\|w_n-z\|^2+(\frac{\tau_n}{2\gamma}-1)\|w_n-z_n\|^2\nonumber\\
&&-2\gamma\tau_n\Big\|Bw_n-Bz-\frac{w_n-z_n}{2\gamma}\Big\|^2]+\beta_n\|z\|^2 \nonumber\\
&& -(1-\alpha_n-\beta_n)\alpha_n\|z_n-w_n\|^2.
\end{eqnarray*}
 Combining (\ref{(3.6)})  we have
\begin{eqnarray}\label{(3.12)}
\|x_{n+1}-z\|^2&\leq&(1-\beta_n)\|w_n-z\|^2++\beta_n\|z\|^2-(1-\alpha_n-\beta_n)\alpha_n\|z_n-w_n\|^2\nonumber\\
&&+\alpha_n[(\frac{\tau_n}{2\gamma}-1)\|w_n-z_n\|^2-2\gamma\tau_n\Big\|Bw_n-Bz-\frac{w_n-z_n}{2\gamma}\Big\|^2] \nonumber\\
&=&(1-\beta_n)[\theta_n\|x_n-z\|^2+(1-\theta_n)\|x_{n-1}-z\|^2-\theta_n(1-\theta_n)\|x_n-x_{n-1}\|^2]\nonumber\\
&&+\beta_n\|z\|^2\nonumber+\alpha_n(\frac{\tau_n}{2\gamma}-2+\alpha_n+\beta_n)\|w_n-z_n\|^2\nonumber\\
&&-2\gamma\tau_n\alpha_n\Big\|Bw_n-Bz-\frac{w_n-z_n}{2\gamma}\Big\|^2.
\end{eqnarray}
\vskip 2mm
 We explain the strong convergence under two situations, respectively.
\vskip 2mm
{\bf Case 1.} There exists an integer $N_0\geq 0$ such that  $\|x_{n+1}-z\|\leq\|x_n-z\|$ for all $n\geq N_0$. Then  there exists the limit of the sequence $\{\|x_n-z\|\}$, denoted by $l=\lim_{n\rightarrow}\|x_n-z\|^2$, and so
 $$\lim_{n\rightarrow\infty}(\|x_n-z\|^2-\|x_{n+1}-z\|^2)=0.$$
In addition, we have
  \begin{eqnarray*}
\sum_{n=0}^\infty(\|x_{n+1}-z\|^2-\|x_n-z\|^2)=\lim_{n\rightarrow \infty}(\|x_{n+1}-z\|-\|x_0-z\|)<\infty
\end{eqnarray*}
and therefore from (\ref{(3.12)}) we obtain
\begin{eqnarray*}
&&\alpha_n(2-\frac{\tau_n}{2\gamma}-\alpha_n-\beta_n)\|w_n-z_n\|^2+2\gamma\tau_n\alpha_n\Big\|Bw_n-Bz-\frac{w_n-z_n}{2\gamma}\Big\|^2\\
&&+\theta_n(1-\theta_n)(1-\beta_n)\|x_n-x_{n-1}\|^2 \\
&\leq&(1-\beta_n)[\theta_n\|x_n-z\|^2+(1-\theta_n)\|x_{n-1}-z\|^2]+\beta_n\|z\|^2-\|x_{n+1}-z\|^2\nonumber\\
&\leq&\theta_n\|x_n-z\|^2+(1-\theta_n)\|x_{n-1}-z\|^2+\beta_n\|z\|^2-\|x_{n+1}-z\|^2\nonumber\\
&=&\|x_n-z\|^2-\|x_{n+1}-z\|^2+(1-\theta_n)(\|x_{n-1}-z\|^2-\|x_n-z\|^2)+\beta_n\|z\|^2,
\end{eqnarray*}
and then from the condition $\beta_n\rightarrow 0$, we get
\begin{eqnarray*}
\lim_{n\rightarrow\infty}\theta_n(1-\theta_n)(1-\beta_n)\|x_n-x_{n-1}\|^2=0;\\
\lim_{n\rightarrow\infty}\alpha_n(2-\frac{\tau_n}{2\gamma}-\alpha_n-\beta_n)\|w_n-z_n\|^2=0;\\
\lim_{n\rightarrow\infty}\gamma\tau_n\alpha_n\Big\|Bw_n-Bz-\frac{w_n-z_n}{2\gamma}\Big\|^2=0.
\end{eqnarray*}
Notice  $\alpha_n(2-\frac{\tau_n}{2\gamma}-\alpha_n-\beta_n)= \alpha_n(1-\alpha_n-\beta_n)+\alpha_n(1-\frac{\tau_n}{2\gamma})$ 
and notice the assumptions on the parameters $\theta_n,\beta_n,\tau_n$ and $\alpha_n$, hence we have
\begin{eqnarray*}
\lim_{n\rightarrow\infty}\|x_n-x_{n-1}\|^2=0;\hspace{0.2cm}
\lim_{n\rightarrow\infty}
\|w_n-z_n\|^2=0;\\
\lim_{n\rightarrow\infty}
\Big\|Bw_n-Bz-\frac{w_n-z_n}{2\gamma}\Big\|^2=0,
\end{eqnarray*}
which imply that $\|w_n-x_n\|=|1-\theta_n| \cdot \|x_n-x_{n-1}\|\rightarrow 0$ and  $\|x_{n+1}-w_n\|\leq\alpha_n\|z_n-w_n\|+\beta_n\|w_n\|\to 0$ as $n\rightarrow \infty$, and then
$$\|x_{n+1}-x_n\|\le \|x_{n+1}-w_n\|+\|w_n-x_n\| \to 0, n\rightarrow \infty$$
This proves the asymptotic regularity of $\{x_n\}$.
\vskip 1mm
By repeating the relevant part of the proof of Theorem \ref{Theorem3.5}, we get $\omega_w(x_n)\subset \Omega$.
\vskip 1mm
It is now at the position to prove the sequence $\{x_n\}$ strongly converges to $z=P_\Omega(0)$.
\vskip 1mm
To this end, we rewrite (\ref{(3.5)}) as $x_{n+1}=(1-\beta_n)v_n+\beta_n\alpha_n(z_n-w_n)$, where $v_n=(1-\alpha_n)w_n+\alpha_nz_n$,
and making use of the inequality $\|u+v\|^2\le \|u\|^2+2\langle v,u+v\rangle$ which holds for all $u,v\in H$,
we get
\begin{eqnarray*}
\|x_{n+1}-z\|^2&=&\|(1-\beta_n)(v_n-z)+\beta_n(\alpha_n(z_n-w_n)-z)\|^2  \nonumber\\
&\leq& (1-\beta_n)^2\|v_n-z\|^2+2\beta_n\Big\langle \alpha_n(z_n-w_n)-z,x_{n+1}-z\Big\rangle.
\end{eqnarray*}
It follows from (\ref{(2.1)}) that
\begin{eqnarray*}
\|v_n-z\|^2=(1-\alpha_n)\|w_n-z\|^2+\alpha_n\|z_n-z\|^2-\alpha_n(1-\alpha_n)\|z_n-w_n\|^2,
\end{eqnarray*}
and then
\begin{eqnarray*}
\|x_{n+1}-z\|^2&\leq& (1-\beta_n)^2[(1-\alpha_n)\|w_n-z\|^2+\alpha_n\|z_n-z\|^2-\alpha_n(1-\alpha_n)\|z_n-w_n\|^2]\nonumber\\
&&+2\beta_n\Big\langle \alpha_n(z_n-w_n)-z,x_{n+1}-z\Big\rangle.
\end{eqnarray*}
 Notice that $\|z_n-z\|^2\leq\|w_n-z\|^2$ from (\ref{(3.10)}), and notice $(1-\beta_n)^2<1-\beta_n $, hence we obtain
 \begin{eqnarray*}
\|x_{n+1}-z\|^2&\leq& (1-\beta_n)\|w_n-z\|^2-\alpha_n(1-\alpha_n)(1-\beta_n)^2\|z_n-w_n\|^2\nonumber\\
&&+2\beta_n\Big\langle \alpha_n(z_n-w_n)-z,x_{n+1}-z\Big\rangle
\end{eqnarray*}
Submitting (\ref{(3.6)}) into the above inequality, we have
\begin{eqnarray}\label{(3.13)}
 \|x_{n+1}-z\|^2&\leq(1-\beta_n)[\theta_n\|x_n-z\|^2+(1-\theta_n)\|x_{n-1}-z\|^2-\theta_n(1-\theta_n)\|x_n-x_{n-1}\|^2]\nonumber\\
&-\alpha_n(1-\alpha_n)(1-\beta_n)^2\|z_n-w_n\|^2+2\beta_n\Big\langle \alpha_n(z_n-w_n)-z,x_{n+1}-z\Big\rangle,
\end{eqnarray}
and then noticing the $\|x_{n+1}-z\|\leq\|x_n-z\|$ for all $n\geq N_0+1$, we have
 \begin{eqnarray}\label{(3.14)}
 \|x_{n+1}-z\|^2&\leq& (1-\beta_n)\|x_{n-1}-z\|^2-\theta_n(1-\theta_n)(1-\beta_n)\|x_n-x_{n-1}\|^2\nonumber\\
&&-\alpha_n(1-\alpha_n)(1-\beta_n)^2\|z_n-w_n\|^2+2\beta_n\Big\langle \alpha_n(z_n-w_n)-z,x_{n+1}-z\Big\rangle  \nonumber\\
&\leq& (1-\beta_n)\|x_{n-1}-z\|^2+2\beta_n\Big\langle \alpha_n(z_n-w_n)-z,x_{n+1}-z\Big\rangle \nonumber\\
&=&(1-\beta_n)\|x_{n}-z\|^2-(1-\beta_n)\|x_{n}-z\|^2+(1-\beta_n)\|x_{n-1}-z\|^2\nonumber\\
  &&+2\beta_n\Big\langle \alpha_n(z_n-w_n)-z,x_{n+1}-z\Big\rangle\nonumber\\
 &=&(1-\beta_n)\|x_{n}-z\|^2+(1-\beta_n)(\|x_{n-1}-z\|^2-\|x_{n}-z\|^2)\nonumber\\
  &&+2\beta_n\Big\langle \alpha_n(z_n-w_n)-z,x_{n+1}-z\Big\rangle\nonumber\\
&=&(1-\beta_n)a_n+\beta_nb_n+c_n,
\end{eqnarray}
where $a_n=\|x_n-z\|^2,\quad b_n=2[\alpha_n\langle z_n-w_n,x_{n+1}-z\rangle+\langle-z,x_{n+1}-z\rangle\,]$ and $c_n=(1-\beta_n)(\|x_{n-1}-z\|^2-\|x_{n}-z\|^2)$.
\vskip 1mm
Since $\omega_w(x_n)\subset \Omega$ and $z=P_{\Omega}(0)$,  which implies from  (\ref{(2.6)}) that
$\langle -z,q-z\rangle\le 0$ for all $q\in\Omega$, we deduce that
\begin{equation}\label{(3.15)}
\limsup_{n\rightarrow\infty}\langle -z,x_{n+1}-z\rangle=\max_{q\in \omega_w(x_n)}\langle -z,q-z\rangle\leq 0.
\end{equation}
Since $\|w_n-z_n\|\rightarrow 0$, $ \overline{\lim}_{n\rightarrow \infty}\alpha_n<1$ and $\|x_{n+1}-z\|$ is  bounded, combining (\ref{(3.15)}) implies that
\begin{align*}
\limsup_{n\to\infty}b_n
&=\limsup_{n\to\infty}2\{\alpha_n\langle z_n-w_n,x_{n+1}-z\rangle+\langle-z,x_{n+1}-z\rangle\}\\
&=\limsup_{n\to\infty}2\langle-z,x_{n+1}-z\rangle\le 0.
\end{align*}
In addition, we have
$$\sum_{n=1}^{\infty}c_n\leq\sum_{n=1}^\infty(\|x_{n-1}-z\|^2-\|x_n-z\|^2)<\infty.$$
These enable us to apply Lemma \ref{Lemma2.4} to (\ref{(3.14)}) to obtain that $a_n\to 0$.
Namely, $x_n\to z$ in norm and the proof of {\bf Case 1} is complete.

\vskip 1mm
{\bf Case 2.}  If the sequence $\{\|x_{n}-z\|\}$ does not decrease at infinity in the sense that there exists a sub-sequence $\{n_k\}$ of $\{n\}$ such that  $\|x_{n_k}-z\|\leq\|x_{{n_k}+1}-z\|$ for all $k\geq 0.$ Furthermore, by Lemma \ref{Lemma2.6}, there exists an integer, non-decreasing sequence $\sigma(n)$ for $n\geq N_1$ (for some $N_1$ large enough) such that $\sigma(n)\rightarrow \infty$ as $n \rightarrow \infty$,
$$\|x_{\sigma(n)}-z\|\leq\|x_{{\sigma(n)}+1}-z\|, \hspace{0.1cm}\|x_{n}-z\|\leq\|x_{{\sigma(n)}+1}-z\|$$
for each $n\geq 0$.
\vskip 2mm
Notice the boundedness of the sequence $\{\|x_{n}-z\|\}$, which implies that there exists  the limit of the sequence $\{\|x_{\sigma(n)}-z\|\}$  and  hence  we conclude that
\begin{eqnarray*}
\lim_{n\rightarrow \infty}(\|x_{\sigma(n)+1}-z\|^2-\|x_{\sigma(n)}-z\|^2)= 0.
\end{eqnarray*}
As a matter of fact, observe that (\ref{(3.12)}) holds for each $n$, so from (\ref{(3.12)}) with $n$ replaced by $\sigma(n)$ and using the relation $\|x_{\sigma(n)}-z\|^2\leq\|x_{\sigma(n)+1}-z\|^2$, we have
\begin{eqnarray*}
&&\theta_{\sigma(n)}(1-\theta_{\sigma(n)})(1-\beta_{\sigma(n)})\|x_{\sigma(n)}-x_{{\sigma(n)}-1}\|^2+\alpha_{\sigma(n)}(2-\frac{\tau_{\sigma(n)}}{2\gamma}-\alpha_{\sigma(n)}-\beta_{\sigma(n)})\|w_{\sigma(n)}-z_{\sigma(n)}\|^2\\
&&+2\gamma\tau_{\sigma(n)}\alpha_{\sigma(n)}\Big\|Bw_{\sigma(n)}-Bz-\frac{w_{\sigma(n)}-z_{\sigma(n)}}{2\gamma}\Big\|^2\\
&\leq&(1-\beta_{\sigma(n)})[\theta_{\sigma(n)}\|x_{\sigma(n)}-z\|^2+(1-\theta_{\sigma(n)})\|x_{{\sigma(n)}-1}-z\|^2]-\|x_{{\sigma(n)}+1}-z\|^2+\beta_{\sigma(n)}\|z\|^2\nonumber\\
&\leq&(1-\beta_{\sigma(n)})\|x_{\sigma(n)}-z\|^2-\|x_{{\sigma(n)}+1}-z\|^2+\beta_{\sigma(n)}\|z\|^2\nonumber\\
&=&\|x_{\sigma(n)}-z\|^2-\|x_{{\sigma(n)}+1}-z\|^2-\beta_{\sigma(n)}(\|x_{\sigma(n)}-z\|^2-\|z\|^2)
\end{eqnarray*}

Notice the assumptions on the parameters $\theta_{\sigma(n)},\beta_{\sigma(n)},\tau_{\sigma(n)}$ and $\alpha_{\sigma(n)}$, Taking the limit by letting $n\to\infty$ yields
\begin{align}
\lim_{n\to\infty}\|w_{\sigma(n)}-z_{\sigma(n)}\|&=0;\label{(3.16)}\\
  \lim_{n\to\infty} \|x_{\sigma(n)}-x_{\sigma(n)-1}\|^2&=0; \label{(3.17)}\\
\lim_{n\to\infty}\Big\|Bw_{\sigma(n)}-Bz-\frac{w_{\sigma(n)}-z_{\sigma(n)}}{2\gamma}\Big\|^2 &=0.\label{(3.18)}
\end{align}

Note that we still have $\|x_{\sigma(n)+1}-x_{\sigma(n)}\|\to 0$  and then that the relations (\ref{(3.16)})-(\ref{(3.18)}) are sufficient to guarantee that $\omega_w(x_{\sigma(n)})\subset \Omega$.
\vskip 2mm
Next we prove $x_{\sigma(n)}\to z$.
\vskip 2mm
Observe that  (\ref{(3.13)}) holds for each $n$.
So replacing $n$ with $\sigma(n)$ in (\ref{(3.13)}) and using
the relation $\|x_{\sigma(n)}-z\|^2\leq\|x_{\sigma(n)+1}-z\|^2$ again for $n\geq N_1$, we obtain
\begin{eqnarray*}
 \nonumber\|x_{\sigma(n)+1}-z\|^2&=&(1-\beta_{\sigma(n)})[\theta_{\sigma(n)}\|x_{\sigma(n)}-z\|^2+(1-\theta_{\sigma(n)})\|x_{\sigma(n)-1}-z\|^2\\ \nonumber
 &&-\theta_{\sigma(n)}(1-\theta_{\sigma(n)})\|x_{\sigma(n)}-x_{\sigma(n)-1}\|^2]\nonumber\\\nonumber
&&-\alpha_{\sigma(n)}(1-\alpha_{\sigma(n)})(1-\beta_{\sigma(n)})^2\|z_{\sigma(n)}-w_{\sigma(n)}\|^2\\\nonumber
&&+2\beta_{\sigma(n)}\Big\langle \alpha_{\sigma(n)}(z_{\sigma(n)}-w_{\sigma(n)})-z,x_{\sigma(n)+1}-z\Big\rangle \\\nonumber
&\leq& (1-\beta_{\sigma(n)})\|x_{\sigma(n)}-z\|^2
+2\beta_{\sigma(n)}\Big\langle \alpha_{\sigma(n)}(z_{\sigma(n)}-w_{\sigma(n)})-z,x_{\sigma(n)+1}-z\Big\rangle,
\end{eqnarray*}
and then we obtain
\begin{eqnarray*}
 \nonumber\beta_{\sigma(n)})\|x_{\sigma(n)}-z\|^2&\leq& \|x_{\sigma(n)}-z\|^2-\|x_{\sigma(n)+1}-z\|^2
+2\beta_{\sigma(n)}\Big\langle \alpha_{\sigma(n)}(z_{\sigma(n)}-w_{\sigma(n)})-z,x_{\sigma(n)+1}-z\Big\rangle\\
&\leq&2\beta_{\sigma(n)}\Big\langle \alpha_{\sigma(n)}(z_{\sigma(n)}-w_{\sigma(n)})-z,x_{\sigma(n)+1}-z\Big\rangle,
\end{eqnarray*}
which means that there exists a constant  $M$ such that $M\ge 2\|x_{n}-z\|$ for all $n$ and 
\begin{eqnarray}\label{(3.19)}
\nonumber\|x_{\sigma(n)}-z\|^2 &\leq& 2\Big\langle \alpha_{\sigma(n)}(z_{\sigma(n)}-w_{\sigma(n)})-z,x_{\sigma(n)+1}-z\Big\rangle\\
&\le& M\|z_{\sigma(n)}-w_{\sigma(n)}\|+2\Big\langle-z,x_{{\sigma(n)}+1}-z\Big\rangle.
\end{eqnarray}
Now since  $\|x_{\sigma(n)+1}-x_{\sigma(n)}\|\to 0$, we have
\begin{align*}
\limsup_{n\to\infty} \langle -z,x_{\sigma(n)+1}-z\rangle &=\limsup_{n\to\infty} \langle -z,x_{\sigma(n)}-z\rangle\\
&=\max_{q\in \omega_w(x_{\sigma(n)})}\langle -z,q-z\rangle\le 0
\end{align*}
by virtue of the facts $z=P_{\Omega}(0)$ and $\omega(x_{\sigma(n)})\subset\Omega$.
\vskip 1mm
Consequently, following from $\|z_{\sigma(n)}-w_{\sigma(n)}\|\to 0$, the relation (\ref{(3.19)}) assures that $x_{\sigma(n)}\to z$, which further implies from Lemma \ref{Lemma2.6} that
$$\|x_n-z\|\le\|x_{\sigma(n)+1}-z\|\le \|x_{\sigma(n)+1}-x_{\sigma(n)}\|+\|x_{\sigma(n)}-z\|\to 0.$$
Namely, $x_n\to z$ in norm, and the proof of Case 2 is complete.
\vskip 1mm

{\bf (II).} Now, it is time to show the strong convergence when $\theta_n\equiv 0$.  Clearly, if $\theta_n=0$, then $w_n=x_{n-1}$ and $x_{n+1}=(1-\alpha_n-\beta_n)x_{n-1}+\alpha_nz_{n-1}$, where $z_{n-1}=J_{\tau_n}^A(I-\tau_nB)x_{n-1}$.
\vskip 1mm
Repeating  the steps from (\ref{(3.10)})-(\ref{(3.12)}), we have the following similar inequality:
\begin{eqnarray}\label{(3.20)}
\|x_{n+1}-z\|^2&\leq&(1-\beta_n)\|x_{n-1}-z\|^2+\beta_n\|z\|^2+\alpha_n(\frac{\tau_n}{2\gamma}-2+\alpha_n+\beta_n)\|x_{n-1}-z_{n-1}\|^2\nonumber\\
&&-2\gamma\tau_n\alpha_n\Big\|Bx_{n-1}-Bz-\frac{x_{n-1}-z_{n-1}}{2\gamma}\Big\|^2.
\end{eqnarray}
Two possible cases will be shown as follows.
\vskip 1mm
{\bf Case 1.} There exists an integer $N_0\geq 0$ such that  $\|x_{n+1}-z\|\leq\|x_n-z\|$ for all $n\geq N_0$. Then  there exists the limit of the sequence $\{\|x_n-z\|\}$, denoted by $l=\lim_{n\rightarrow}\|x_n-z\|^2$, and so
 $$\lim_{n\rightarrow\infty}(\|x_n-z\|^2-\|x_{n+1}-z\|^2)=0.$$
In addition, we have
  \begin{eqnarray*}
\sum_{n=0}^\infty(\|x_{n+1}-z\|^2-\|x_n-z\|^2)=\lim_{n\rightarrow \infty}(\|x_{n+1}-z\|^2-\|x_0-z\|^2)<\infty
\end{eqnarray*}
and therefore from (\ref{(3.20)}) we obtain
\begin{eqnarray*}
&&\alpha_n(2-\frac{\tau_n}{2\gamma}-\alpha_n-\beta_n)\|x_{n-1}-z_{n-1}\|^2+2\gamma\tau_n\alpha_n\Big\|Bx_{n-1}-Bz-\frac{x_{n-1}-z_{n-1}}{2\gamma}\Big\|^2\\
&\leq&(1-\beta_n)\|x_{n-1}-z\|^2+\beta_n\|z\|^2-\|x_{n+1}-z\|^2\nonumber\\
&=&(\|x_{n-1}-z\|^2-\|x_{n}-z\|^2)+ (\|x_{n}-z\|^2-\|x_{n+1}-z\|^2)+\beta_n(\|z\|^2-\|x_{n-1}-z\|^2).
\end{eqnarray*}
Notice the assumptions on the parameters $\alpha_n$, $\beta_n$ and $\tau_n$,  we have
\begin{eqnarray*}
\lim_{n\rightarrow\infty}
\|x_{n-1}-z_{n-1}\|^2=0;
\lim_{n\rightarrow\infty}
\Big\|Bx_{n-1}
-Bz-\frac{x_{n-1}-z_{n-1}}{2\gamma}\Big\|^2=0,
\end{eqnarray*}
which imply that  $\|x_{n+1}-x_{n-1}\|\leq\alpha_n\|z_{n-1}-x_{n-1}\|+\beta_n\|x_{n-1}\|\to 0$ as $n\rightarrow \infty$.
\vskip 1mm
Next we show the asymptotic regularity of $\{x_n\}$.
Indeed, it follows from the relation between  the norm and inner product that
\begin{eqnarray*}
\|x_{n+1}-x_n\|^2&=&\|x_{n+1}-z+z-x_n\|^2\\
&=&\|x_{n+1}-z\|^2+\|z-x_n\|^2+2\langle x_{n+1}-z,z-x_n\rangle\\
&=&\|x_{n+1}-z\|^2+\|z-x_n\|^2+2\langle x_{n+1}-z,z-x_{n+1}+x_{n+1}-x_n\rangle\\
&\leq&\|x_n-z\|^2-\|x_{n+1}-z\|^2+2\|x_{n+1}-z\|\cdot\|x_{n+1}-x_n\|\\
&\leq&\|x_n-z\|^2-\|x_{n+1}-z\|^2+2(M+m)\cdot\|x_{n+1}-x_n\|,
\end{eqnarray*}
where $M$ is a constant such that $M\ge \|x_{n}-z\|$ for all $n$ and $m>0$ is a given constant, which means that
\begin{eqnarray*}
\|x_{n+1}-x_n\|^2-2(M+m)\cdot\|x_{n+1}-x_n\|\leq\|x_n-z\|^2-\|x_{n+1}-z\|^2,
\end{eqnarray*}
 and then
 \begin{eqnarray*}
\sum_{n=0}^\infty[\|x_{n+1}-x_n\|-2(M+m)]\cdot\|x_{n+1}-x_n\|\leq\sum_{n=0}^\infty(\|x_n-z\|^2-\|x_{n+1}-z\|^2)<\infty.
\end{eqnarray*}
Therefore we obtain $[\|x_{n+1}-x_n\|-2(M+m)]\cdot\|x_{n+1}-x_n\|\rightarrow 0$ as $n\rightarrow \infty$.\\

Since $\|x_{n+1}-x_n\|\leq\|x_{n+1}-z\|+\|x_n-z\|\leq 2M$, we have $\|x_{n+1}-x_n\| \to 0$. This proves the asymptotic regularity of $\{x_n\}$.
 \vskip 1mm
 By  repeating the relevant part of the proof of {\bf Case 1} in {\bf (I)}, we get  $\omega_w(x_n)\subset \Omega$ and
  \begin{eqnarray}\label{(3.21)}
 \|x_{n+1}-z\|^2&\leq&(1-\beta_n)\|x_{n-1}-z\|^2-\alpha_n(1-\alpha_n)(1-\beta_n)^2\|z_{n-1}-x_{n-1}\|^2 \nonumber\\
 &&+2\beta_n\Big\langle \alpha_n(z_{n-1}-x_{n-1})-z,x_{n+1}-z\Big\rangle  \nonumber\\
 &\leq& (1-\beta_n)\|x_{n}-z\|^2+(1-\beta_n)(\|x_{n-1}-z\|^2-\|x_{n}-z\|^2)\nonumber\\
  &&+2\beta_n\Big\langle \alpha_n(z_{n-1}-x_{n-1})-z,x_{n+1}-z\Big\rangle\nonumber\\
&=&(1-\beta_n)a_n+\beta_nb_n+c_n,
\end{eqnarray}
where $a_n=\|x_n-z\|^2,\quad b_n=2[\alpha_n\langle z_{n-1}-x_{n-1},x_{n+1}-z\rangle+\langle-z,x_{n+1}-z\rangle\,]$ and $c_n=(1-\beta_n)(\|x_{n-1}-z\|^2-\|x_{n}-z\|^2)$.
\vskip 1mm
The rest of proof  is consistent with  {\bf Case 1} in {\bf (I).} So we get $x_n\to z$ in norm.
\vskip 1mm
{\bf Case 2.}  If the sequence $\{\|x_{n}-z\|\}$ does not decrease at infinity in the sense that there exists a sub-sequence $\{n_k\}$ of $\{n\}$ such that  $\|x_{n_k}-z\|\leq\|x_{{n_k}+1}-z\|$ for all $k\geq 0.$ Furthermore, by Lemma \ref{Lemma2.6}, there exists an integer, non-decreasing sequence $\sigma(n)$ for $n\geq N_1$ (for some $N_1$ large enough) such that $\sigma(n)\rightarrow \infty$ as $n \rightarrow \infty$,
$$\|x_{\sigma(n)}-z\|\leq\|x_{{\sigma(n)}+1}-z\|, \hspace{0.1cm}\|x_{n}-z\|\leq\|x_{{\sigma(n)}+1}-z\|$$
for each $n\geq 0$.
\vskip 2mm
Note that we still have $\|x_{\sigma(n)+1}-x_{\sigma(n)}\|\to 0$.
\vskip 1mm
By  repeating the relevant part of the proof of {\bf Case 2} in {\bf (I)}, for $n\geq N_1$, we have
\begin{eqnarray*}
&&\alpha_{\sigma(n)}(2-\frac{\tau_{\sigma(n)}}{2\gamma}-\alpha_{\sigma(n)}-\beta_{\sigma(n)})\|x_{\sigma(n)-1}-z_{\sigma(n)-1}\|^2\\
&&+2\gamma\tau_{\sigma(n)}\alpha_{\sigma(n)}\Big\|Bx_{\sigma(n)-1}-Bz-\frac{x_{\sigma(n)-1}-z_{\sigma(n)-1}}{2\gamma}\Big\|^2\\
&\leq&(1-\beta_{\sigma(n)})\|x_{{\sigma(n)}-1}-z\|^2-\|x_{{\sigma(n)}+1}-z\|^2+\beta_{\sigma(n)}\|z\|^2\nonumber\\
&\leq&(1-\beta_{\sigma(n)})\|x_{\sigma(n)}-z\|^2-\|x_{{\sigma(n)}+1}-z\|^2+\beta_{\sigma(n)}\|z\|^2\nonumber\\
&=&\|x_{\sigma(n)}-z\|^2-\|x_{{\sigma(n)}+1}-z\|^2-\beta_{\sigma(n)}(\|x_{\sigma(n)}-z\|^2-\|z\|^2)
\end{eqnarray*}
Notice the assumptions on the parameters $\alpha_{\sigma(n)}$, $\beta_{\sigma(n)}$ and $\tau_{\sigma(n)}$, taking the limit by letting $n\to\infty$ yields
\begin{align*}
\lim_{n\to\infty}\|x_{\sigma(n)-1}-z_{\sigma(n)-1}\|&=0;\\
  \lim_{n\to\infty}\Big\|Bx_{\sigma(n)-1}-Bz-\frac{x_{\sigma(n)-1}-z_{\sigma(n)-1}}{2\gamma}\Big\|^2 &=0,
\end{align*}
and  that these relations  are sufficient to guarantee that $\omega_w(x_{\sigma(n)})\subset \Omega$.
\vskip 1mm
Observe that (\ref{(3.21)}) holds for each $n$.
So replacing $n$ with $\sigma(n)$ in (\ref{(3.21)}) and using
the relation $\|x_{\sigma(n)}-z\|^2<\|x_{\sigma(n)+1}-z\|^2$ again for $n\geq N_1$, we obtain
\begin{eqnarray*}
 \nonumber\|x_{\sigma(n)+1}-z\|^2&\leq&(1-\beta_{\sigma(n)})\|x_{\sigma(n)-1}-z\|^2-\alpha_{\sigma(n)}(1-\alpha_{\sigma(n)})(1-\beta_{\sigma(n)})^2\|z_{\sigma(n)-1}-x_{\sigma(n)-1}\|^2\\\nonumber
&&+2\beta_{\sigma(n)}\langle \alpha_{\sigma(n)}(z_{\sigma(n)-1}-w_{\sigma(n)-1})-z,x_{\sigma(n)+1}-z\rangle \\\nonumber
&\leq& (1-\beta_{\sigma(n)})\|x_{\sigma(n)}-z\|^2
+2\beta_{\sigma(n)}\langle \alpha_{\sigma(n)}(z_{\sigma(n)-1}-x_{\sigma(n)-1})-z,x_{\sigma(n)+1}-z\rangle,
\end{eqnarray*}
The rest of proof  is consistent with  {\bf Case 2} in {\bf (I).} So we get $x_n\to z$ in norm.
\vskip 2mm
{\bf (III).} Finally we consider the situation $\theta_n\equiv 1$. It is obvious that if $\theta_n=1$, then $w_n=x_{n}$ and $x_{n+1}=(1-\alpha_n-\beta_n)x_{n}+\alpha_nz_{n}$, where $z_{n}=J_{\tau_n}^A(I-\tau_nB)x_{n}$.
\vskip 1mm
Repeating  the steps from (\ref{(3.10)})-(\ref{(3.12)}), we have the following similar inequality
\begin{eqnarray*}
\|x_{n+1}-z\|^2&\leq&(1-\beta_n)\|x_{n}-z\|^2+\beta_n\|z\|^2+\alpha_n(\frac{\tau_n}{2\gamma}-2+\alpha_n+\beta_n)\|x_{n}-z_{n}\|^2\nonumber\\
&&-2\gamma\tau_n\alpha_n\Big\|Bx_{n}-Bz-\frac{x_{n}-z_{n}}{2\gamma}\Big\|^2\\
&\leq&(1-\beta_n)\|x_{n}-z\|^2+\beta_n\|z\|^2\\
&\leq&\max\{\|x_{n}-z\|^2,\|z\|^2\},
\end{eqnarray*}
which means that $\|x_{n}-z\|^2$ is bounded.
Similar to the proof of the above situations, two possible cases will be shown as follows.
\vskip 1mm
Two possible cases will be shown as follows.
\vskip 1mm
{\bf Case 1.} There exists an integer $N_0\geq 0$ such that  $\|x_{n+1}-z\|\leq\|x_n-z\|$ for all $n\geq N_0$. Then  there exists the limit of the sequence $\{\|x_n-z\|\}$, denoted by $l=\lim_{n\rightarrow}\|x_n-z\|^2$, and so
 $$\lim_{n\rightarrow\infty}(\|x_n-z\|^2-\|x_{n+1}-z\|^2)=0.$$
In addition, we have
  \begin{eqnarray*}
\sum_{n=0}^\infty(\|x_{n+1}-z\|^2-\|x_n-z\|^2)=\lim_{n\rightarrow \infty}(\|x_{n+1}-z\|-\|x_0-z\|)<\infty.
\end{eqnarray*}
By  repeating the relevant part of the proof of {\bf Case 1} in {\bf (I)}, we have
\begin{eqnarray*}
\lim_{n\rightarrow\infty}
\|x_{n}-z_{n}\|^2=0;
\lim_{n\rightarrow\infty}
\Big\|Bx_n-Bz-\frac{x_{n}-z_{n}}{2\gamma}\Big\|^2=0,
\end{eqnarray*}
which imply that  $\|x_{n+1}-x_{n}\|\leq\alpha_n\|z_{n}-x_{n}\|+\beta_n\|x_{n}\|\to 0$ as $n\rightarrow \infty$.
\vskip 1mm
The rest of proof  is consistent with  {\bf Case 1} in {\bf (I).} So we get $x_n\to z$ in norm.
\vskip 1mm
{\bf Case 2.}  If the sequence $\{\|x_{n}-z\|\}$ does not decrease at infinity in the sense that there exists a sub-sequence $\{n_k\}$ of $\{n\}$ such that  $\|x_{n_k}-z\|\leq\|x_{{n_k}+1}-z\|$ for all $k\geq 0.$ Furthermore, by Lemma \ref{Lemma2.6}, there exists an integer, non-decreasing sequence $\sigma(n)$ for $n\geq N_1$ (for some $N_1$ large enough) such that $\sigma(n)\rightarrow \infty$ as $n \rightarrow \infty$,
$$\|x_{\sigma(n)}-z\|\leq\|x_{{\sigma(n)}+1}-z\|, \hspace{0.1cm}\|x_{n}-z\|\leq\|x_{{\sigma(n)}+1}-z\|$$
for each $n\geq 0$.

Note that we still have $\|x_{\sigma(n)+1}-x_{\sigma(n)}\|\to 0$. The rest of proof  is consistent with  {\bf Case 1} in {\bf (I).} So we get $x_n\to z$ in norm.

\vskip 1mm
This completes the proof.
\end{proof}
\vskip 2mm
\begin{remark}\label{Remark3.7}
It is easy to see that  the convergence of our algorithms still holds even without the following  condition:
 $$\sum_{n=1}^\infty\theta_n\|x_n-x_{n-1}\|^2<\infty.$$
The above assumption is not necessary  at all in our cases. To some extents,  our  inertial-like algorithms seem to have two merits:\vskip 1mm
(1) Compared with the general  inertial proximal algorithms, we do not need to calculate the values of $\|x_n-x_{n-1}\|$ before choosing the parameters $\theta_n$ in numerical simulations, which make the algorithms  convenient and use-friendly.
\vskip 1mm
(2) Compared with the  general inertial algorithms, the inertial factors $\theta_n$ are chosen in $[0,1]$ with $\theta_n=1$ possible, which are new, natural  and interesting algorithms in some ways. In particular,  under more mild assumptions, our proofs are simpler and different from the others.
\end{remark}

\section{Applications}
\subsection{Convex Optimization}
 Let $C$ be a nonempty closed and convex subset of $H$ and $f, g$ be two proper, convex and lower
semi-continuous functions. Moreover, assume that $g$ is differentiable with a $1/\gamma-$Lipschitz gradient. With this data, consider the following convex minimizing problem :
\begin{eqnarray} \label{Eq:4.1}
\min_{x\in C}\{f(x)+g(x)\}.
\end{eqnarray}
\vskip 2mm
Denoted by $\Omega=\{x:\min_{x\in C}\{f(x)+g(x)\}$.  Recall that the {\it subdifferential} of $f$ at $x$, denote by $\partial f$:
 \begin{eqnarray*}
\partial f(x):=\{x^*\in C:f(y)-f(x)\geq\langle y-x,x^*\rangle,\,\forall y\in H\}.
\end{eqnarray*}
\vskip 2mm

So, by taking  $A= \partial f$ and $B=\nabla  g$(: the gradient of $g$) in (\ref{(3.2)}), where $A$ and $B$ are maximal monotone operators and $B$ is $\gamma$-cocoercive, we can apply Theorem \ref{Theorem3.5} and Theorem \ref{Theorem3.6} and obtain the following results:
 \vskip 2mm

\begin{theorem}\label{Theorem4.1}
 Let $f$ and $g$ be two proper, convex and lower semi-continuous functions such that $\nabla g$ and $\partial f$  are maximal monotone operators and $\nabla g$ also $\gamma-$cocoercive.  Assume that $\Omega\neq\emptyset$.   Let $\{x_n\}$ and $\{y_n\}$ be any two sequences generated by the following scheme (see, e.g. ):
\begin{equation*}
\begin{cases}
  w_n=x_{n-1}+\theta_n(x_n-x_{n-1}),\\
x_{n+1}=J^{\partial f}_{\tau_n}(w_n-\tau_n \nabla g w_n),\\
\end{cases}
\end{equation*}
If $\tau_n\in(\epsilon, 2\gamma-\epsilon)$  for some given $\epsilon>0$ small enough, and $ \theta_n\in [0,1]$, then the sequences $\{x_n\}$ converges weakly to a point of ${\Omega.}$
\end{theorem}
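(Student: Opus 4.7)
The plan is to recognize Theorem 4.1 as a direct specialization of Theorem \ref{Theorem3.5} with the identifications $A := \partial f$ and $B := \nabla g$. First I would verify the standing assumptions $\mathbf{C1}$--$\mathbf{C2}$ for this choice: $\partial f$ is maximal monotone by Rockafellar's classical theorem for proper, convex, lower semi-continuous functions, while $\nabla g$ is maximal monotone as the gradient of a convex, Fréchet-differentiable function defined on all of $H$, and is $\gamma$-cocoercive by hypothesis (consistent with the Baillon--Haddad theorem, which guarantees cocoercivity from $(1/\gamma)$-Lipschitz continuity of the gradient of a convex function). Under these identifications the resolvent $J_{\tau_n}^{\partial f}$ is exactly the proximal mapping of $\tau_n f$, so the update in the statement is nothing but the recursion (\ref{(3.3)})--(\ref{(3.4)}) of Algorithm 3.1.

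Next I would link the minimization problem (\ref{Eq:4.1}) to the null-point problem (\ref{(3.2)}). Because $g$ is finite-valued and differentiable on all of $H$, the Moreau--Rockafellar sum rule gives $\partial(f+g) = \partial f + \nabla g$, and Fermat's rule then identifies the minimizer set with $\{x^{*}\in H : 0 \in (\partial f + \nabla g)(x^{*})\}$, which is precisely the set $\Omega$ appearing in $\mathbf{C2}$. Hence the nonemptiness hypothesis $\Omega \neq \emptyset$ translates to existence of a minimizer, and no additional qualification condition is needed.

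Having verified $\mathbf{C1}$--$\mathbf{C2}$ together with the parameter constraints $\tau_n \in (\epsilon, 2\gamma - \epsilon)$ and $\theta_n \in [0,1]$, an immediate invocation of Theorem \ref{Theorem3.5} delivers the weak convergence of $\{x_n\}$ to some $\bar x \in \Omega$, that is, to a minimizer of $f+g$. The one expository subtlety I would flag concerns the constraint set $C$ appearing in (\ref{Eq:4.1}): since the backward step employs $J_{\tau_n}^{\partial f}$ rather than $J_{\tau_n}^{\partial(f+\iota_C)}$, the constraint must either be inactive (e.g.\ $C = H$) or be absorbed into the nonsmooth part by replacing $f$ with $f + \iota_C$. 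In the latter case $\partial(f+\iota_C)$ is still maximal monotone, the resolvent becomes a proximal mapping followed by projection information, and the identical argument goes through; this is the only point that demands any care, and it is essentially notational rather than a genuine analytical obstacle.
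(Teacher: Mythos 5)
Your proposal matches the paper's treatment exactly: the paper proves Theorem \ref{Theorem4.1} simply by setting $A=\partial f$ and $B=\nabla g$ in (\ref{(3.2)}) and invoking Theorem \ref{Theorem3.5}, which is precisely your argument, and your verification of the hypotheses (maximal monotonicity of $\partial f$, cocoercivity of $\nabla g$, the sum rule and Fermat's rule) is more careful than what the paper writes down. Your closing remark about the constraint set $C$ identifies a genuine looseness in the paper's formulation of (\ref{Eq:4.1}) and resolves it correctly by absorbing $C$ into the nonsmooth term.
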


\begin{theorem}\label{Theorem4.2}
 Let $f$ and $g$ be two proper, convex and lower semi-continuous functions such that $\nabla g$ and $\partial f$  are maximal monotone operators and $\nabla g$ also $\gamma-$cocoercive.  Assume that $\Omega\neq\emptyset$.   Let $\{x_n\}$ and $\{y_n\}$ be any two sequences generated by the following scheme:
\begin{equation*}
\begin{cases}
  w_n=x_{n-1}+\theta_n(x_n-x_{n-1}),\\
x_{n+1}=(1-\alpha_n-\beta_n)w_n+\alpha_nJ^{\partial f}_{\tau_n}(w_n-\tau_n \nabla g w_n),\\
\end{cases}
\end{equation*}
where $\alpha_n,\beta_n,\theta_n$ are satisfying the selection criteria in Algorithm3.2. If $\tau_n\in(\epsilon, 2\gamma-\epsilon)$ for some given $\epsilon>0$ small enough, then the sequences $\{x_n\}$ converges strongly to $z=P_{\Omega}(0)$.
\end{theorem}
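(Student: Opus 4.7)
The plan is to reduce Theorem \ref{Theorem4.2} to a direct specialization of Theorem \ref{Theorem3.6}, taking $A:=\partial f$ and $B:=\nabla g$ in the abstract null-point framework. Under this identification the recursion in the statement coincides verbatim with Algorithm 3.2, and the hypotheses on $\{\theta_n\}$ (one of the three allowed configurations), on $\{\alpha_n\},\{\beta_n\}\subset(0,1)$ (the stated limit and summability requirements), and on $\{\tau_n\}\subset(\epsilon,2\gamma-\epsilon)$ are already precisely those imposed in that theorem.

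First I would check assumption (\textbf{C1}). By Rockafellar's classical result, the subdifferential of a proper, lower semi-continuous, convex function on $H$ is maximal monotone, which takes care of $A=\partial f$. The hypothesis furnishes both maximal monotonicity of $\nabla g$ and its $\gamma$-cocoercivity; the latter is consistent with the Baillon--Haddad theorem, identifying $\gamma$-cocoercivity of $\nabla g$ with Lipschitz constant $1/\gamma$ for convex differentiable $g$. Thus the pair $(A,B)$ fits the (\textbf{C1}) framework.

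Second I would identify the two solution sets. Because $g$ is everywhere differentiable on $H$, the subdifferential sum rule applies without any further constraint qualification: $\partial(f+g)=\partial f+\nabla g$. Fermat's rule then gives $x^{*}\in\arg\min(f+g)$ if and only if $0\in\partial f(x^{*})+\nabla g(x^{*})$, so the set $\Omega$ of minimizers coincides with the solution set of $0\in(A+B)x^{*}$ appearing in (\ref{(3.2)}). The assumption $\Omega\neq\emptyset$ therefore supplies (\textbf{C2}).

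Invoking Theorem \ref{Theorem3.6} then yields strong convergence of $\{x_n\}$ to $P_{\Omega}(0)$, the minimum-norm minimizer of $f+g$, which is the claim. No new analytic obstacle arises; the only point requiring even a brief remark is the validity of the subdifferential sum rule, which is immediate from $\mathrm{dom}\,g=H$. All of the delicate asymptotic work (Mainge-style case split, cocoercivity estimate, and the Xu-type lemma argument) has already been carried out in Theorem \ref{Theorem3.6}, so the proof of Theorem \ref{Theorem4.2} reduces entirely to the bookkeeping above.
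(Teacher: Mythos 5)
Your proposal is correct and follows essentially the same route as the paper, which proves Theorem \ref{Theorem4.2} simply by setting $A=\partial f$ and $B=\nabla g$ in (\ref{(3.2)}) and invoking Theorem \ref{Theorem3.6}. Your additional verifications (maximal monotonicity of $\partial f$ via Rockafellar, the sum rule $\partial(f+g)=\partial f+\nabla g$ from $\mathrm{dom}\,g=H$, and Fermat's rule identifying the two solution sets) are details the paper leaves implicit, and they are all sound.
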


\subsection{Variational Inequality Problem}
\vskip 3mm

Let $C$ be a nonempty closed and convex subset of $H$ and $B:H\rightarrow H$ a maximal and $\gamma-$coercive operator.
 \vskip 1mm

Consider the classical {\it variational inequality} (VI) problem of finding a point $x^*\in C$ such that
\begin{eqnarray} \label{Eq:4.2}
\langle Bx^*,z-x^*\rangle\geq 0,\,\,\,\forall z\in C.
\end{eqnarray}

Denote by $\Omega$ the solution set of the problem (VI) (\ref{Eq:4.2}).
\vskip 1mm

So, by taking $Ax:=\{w\in H:\langle w,z-x\rangle\leq 0,\,\forall z\in C\}$ (: the \textit{normal cone} of the set $C$),  the problem (VI) (\ref{Eq:4.2}) is equivalent to finding zeroes of $A+B$ (see e.g., Peypouquet\cite{P2015}).
 \vskip 1mm
We apply our algorithms to the {\it variational inequality} (VI) problem and have the following theorems.
\begin{theorem} \label{Th:4.3}
{\it Let $C$ be a nonempty closed and convex subset of $H$, $B:H\rightarrow H$ a maximal and $\gamma-$coercive operator. Choose $\tau_n\in(\epsilon, 2\gamma-\epsilon)$   for some given $\epsilon>0$ small enough and assume that $\Omega\neq\emptyset$. Construct the sequences $\{x_n\}$ and $\{w_n\}$ as follows:
\begin{equation*}
\begin{cases}
 w_n=x_{n-1}+\theta_n(x_n-x_{n-1}),\\
x_{n+1}=J^A_{\tau_n}(I-\tau_n B)w_n.
\end{cases}
\end{equation*}
If  $\theta_n\in [0,1]$, then the sequences $\{x_n\}$  converges weakly to a point of ${\Omega.}$}
\end{theorem}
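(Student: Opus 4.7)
The plan is to reduce Theorem 4.3 to the already-proved Theorem 3.5 by identifying the variational inequality (VI) as a null point problem for a suitable sum of maximal monotone operators, so that the iteration in Theorem 4.3 becomes a special instance of Algorithm 3.1. First I would verify the three ingredients needed for Theorem 3.5: (i) both $A$ and $B$ are maximal monotone on $H$, (ii) $B$ is $\gamma$-cocoercive, and (iii) the solution set $\Omega$ of $0\in (A+B)x^*$ coincides with the solution set of the VI and is nonempty.

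For (i), the normal-cone operator $A x = N_C(x) = \{w\in H:\langle w,z-x\rangle\leq 0,\ \forall z\in C\}$ associated with the nonempty closed convex set $C$ is a classical example of a maximal monotone operator (it is the subdifferential of the indicator function $\iota_C$, and subdifferentials of proper lower semicontinuous convex functions are maximal monotone). For (ii) and the hypothesis on $B$, these are given directly in the statement. For (iii), I would record the standard equivalence
\begin{equation*}
x^*\in C,\ \langle Bx^*,z-x^*\rangle\geq 0\ \forall z\in C
\quad\Longleftrightarrow\quad
-Bx^*\in N_C(x^*)
\quad\Longleftrightarrow\quad
0\in (A+B)x^*,
\end{equation*}
which is a one-line computation from the definition of $N_C$. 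The assumption $\Omega\neq\emptyset$ in the theorem then matches the assumption $\Omega\neq\emptyset$ of Theorem 3.5.

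Once these identifications are in place, the iteration
\begin{equation*}
w_n=x_{n-1}+\theta_n(x_n-x_{n-1}),\qquad x_{n+1}=J_{\tau_n}^A(I-\tau_n B)w_n
\end{equation*}
is literally Algorithm 3.1 with this choice of $A$ and $B$, and the parameter conditions $\theta_n\in[0,1]$ and $\tau_n\in(\epsilon,2\gamma-\epsilon)$ are exactly those assumed in Theorem 3.5. Therefore assumptions \textbf{C1}--\textbf{C2} are satisfied, and Theorem 3.5 immediately yields that $\{x_n\}$ converges weakly to some $\bar{x}\in\Omega$, which by the equivalence above is a solution of the VI.

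There is no real obstacle here beyond checking the equivalence between the VI and the inclusion $0\in(A+B)x^*$; the convergence analysis itself is entirely outsourced to Theorem 3.5. The only subtlety to note is that because $A$ is the normal cone of $C$, the resolvent $J_{\tau_n}^A$ is the metric projection $P_C$ onto $C$, so in practice the algorithm reads $x_{n+1}=P_C(w_n-\tau_n B w_n)$; I would mention this explicitly to clarify the practical form of the scheme, but it plays no role in the convergence proof.
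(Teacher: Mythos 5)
Your proposal is correct and follows essentially the same route as the paper: the paper likewise reduces the VI to the inclusion $0\in(A+B)x^*$ by taking $A=N_C$ (citing Peypouquet for the equivalence) and then invokes Theorem \ref{Theorem3.5} for Algorithm 3.1, with no further argument. Your explicit verification of the maximal monotonicity of the normal cone and the remark that $J_{\tau_n}^A=P_C$ are welcome additions but do not change the substance.
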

\begin{theorem}\label{Theorem4.4}
{\it Let $C$ be a nonempty closed and convex subset of $H$, $B:H\rightarrow H$ be a maximal and $\gamma-$cocoercive  operator.  Choose $\tau_n\in(\epsilon, 2\gamma-\epsilon)$  for some given $\epsilon>0$ small enough and assume that $\Omega\neq\emptyset$. Construct the sequences $\{x_n\}$, $\{w_n\}$ as follows.
\begin{equation*}
\begin{cases}
 w_n=x_{n-1}+\theta_n(x_n-x_{n-1}),\\
x_{n+1}=(1-\alpha_n-\beta_n)w_n+\alpha_nJ^A_{\tau_n}(I-\tau_n B)w_n,
\end{cases}
\end{equation*}
where  $\alpha_n,\beta_n,\theta_n$ are satisfying the selection criteria in Algorithm 3.2.
Then the sequence $\{x_n\}$ converges strongly to $z=P_{\Omega}(0)$.}
\end{theorem}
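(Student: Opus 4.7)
The plan is to reduce Theorem~\ref{Theorem4.4} to a direct application of Theorem~\ref{Theorem3.6} by recasting the variational inequality as a null-point problem of the form $0 \in (A+B)x^*$. First I would take $A$ to be the normal-cone operator
\[
Ax := N_C(x) = \{w \in H : \langle w,\, z-x\rangle \leq 0,\ \forall z\in C\},
\]
with $Ax=\emptyset$ when $x\notin C$. The classical Rockafellar theorem guarantees that $N_C$ is maximal monotone whenever $C$ is nonempty, closed, and convex, and a standard identity asserts that its resolvent is the metric projection, i.e.\ $J_{\tau}^{A} = P_C$ for every $\tau>0$. With this identification, the condition $x^*\in C$ together with (\ref{Eq:4.2}) is equivalent to $-Bx^*\in N_C(x^*)$, i.e.\ $0\in (A+B)x^*$, so the solution set $\Omega$ of (VI) coincides with the solution set of the associated null-point problem.

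Next I would check that the standing hypotheses {\bf C1}--{\bf C2} of Section 3.2 are in force for this choice of $(A,B)$: $A=N_C$ is maximal monotone by the step above, $B$ is maximal monotone and $\gamma$-cocoercive by hypothesis, and $\Omega\ne\emptyset$ by assumption. Since the step sizes satisfy $\tau_n\in(\epsilon,2\gamma-\epsilon)$ and the parameters $\alpha_n,\beta_n,\theta_n$ obey the selection rules of Algorithm 3.2, the recursion displayed in Theorem~\ref{Theorem4.4} is literally the iteration of Algorithm 3.2 for this choice of $A$. Theorem~\ref{Theorem3.6} then yields strong convergence of $\{x_n\}$ to $z=P_\Omega(0)$, which is exactly the conclusion sought.

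The only nonroutine point in this plan is the convex-analytic equivalence between (\ref{Eq:4.2}) and $0\in(N_C+B)x^*$; this rests on the characterization of projection by (\ref{(2.6)}) and on Rockafellar's maximality theorem for $N_C$, and it is cited in the paper from Peypouquet~\cite{P2015}. Once this equivalence is invoked, no further analytic work is required beyond quoting Theorem~\ref{Theorem3.6}, so I expect no genuine obstacle in carrying the plan through.
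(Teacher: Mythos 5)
Your proposal matches the paper's own (implicit) argument exactly: the paper likewise takes $A$ to be the normal cone $N_C$, invokes the equivalence of (\ref{Eq:4.2}) with $0\in(A+B)x^*$ citing Peypouquet \cite{P2015}, and then applies Theorem \ref{Theorem3.6} verbatim. The additional observations you make (Rockafellar's maximality of $N_C$, $J_\tau^A=P_C$) are correct and only make the reduction more explicit than the paper's.
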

\vskip 4mm

\section{ Numerical Examples}
In this section, we first present two numerical examples in infinite and finite-dimensional Hilbert spaces to illustrate the applicability, efficiency and stability of Algorithm 3.1 and Algorithm 3.2, and then consider sparse signal recovery  from real-world life in finite dimensional spaces.
In addition, the comparison results with other algorithms  are also described. All the codes are written
in Matlab R2016b and are preformed on an LG dual core personal computer.
\vskip 2mm
{\bf Example 5.1.}In this example,  we take $H=R^3$ with Euclidean norm. Let $A:R^3\rightarrow R^3$ be defined by  $Ax=3x$ and let $B:R^3\rightarrow R^3$ be defined by $Bx=\frac{x}{3}+(-1,2,0)$, $\forall x\in R^3$. We can see that $A$ and $B$ are maximal monotone mappings and $B$ with $\gamma$-cocoercive($0<\gamma\leq 3$), respectively. Indeed, let $x,y\in R^3$, then
\begin{equation*}
\langle Bx-By,x-y\rangle=\langle \frac{x}{3}-\frac{y}{3},x-y\rangle=\frac{1}{3}\|x-y\|^2,
\end{equation*}
while
\begin{equation*}
\|Bx-By\|^2=\frac{1}{9}\|x-y\|^2,
\end{equation*}
which means $B$ is $\frac{1}{3}$-cocoercive. It is not hard to check that $(A+B)^{-1}(0)=(3/10,-6/10,0)$.
\vskip 1mm
 In the simulation process, we choose $x_0=[0.1;-0.2;0.1], x_1=[0.2;0.1;-0.3]$ as two arbitrary initial points.
In order to investigate the change  and tendency of $\{x_n\}$ more clearly, we denote by  $z=(3/10,-6/10,0)$ and  let $\|x_{n+1}-z\|\leq 10^{-5}$ be the stopping criterion. The experimental results are shown in Figs.\ref{Fig:6}-\ref{Fig:7}, where z-axis represents the logarithm  of the third coordinate value for each point.
\begin{figure}[]
  \centering
  \includegraphics[width=5cm]{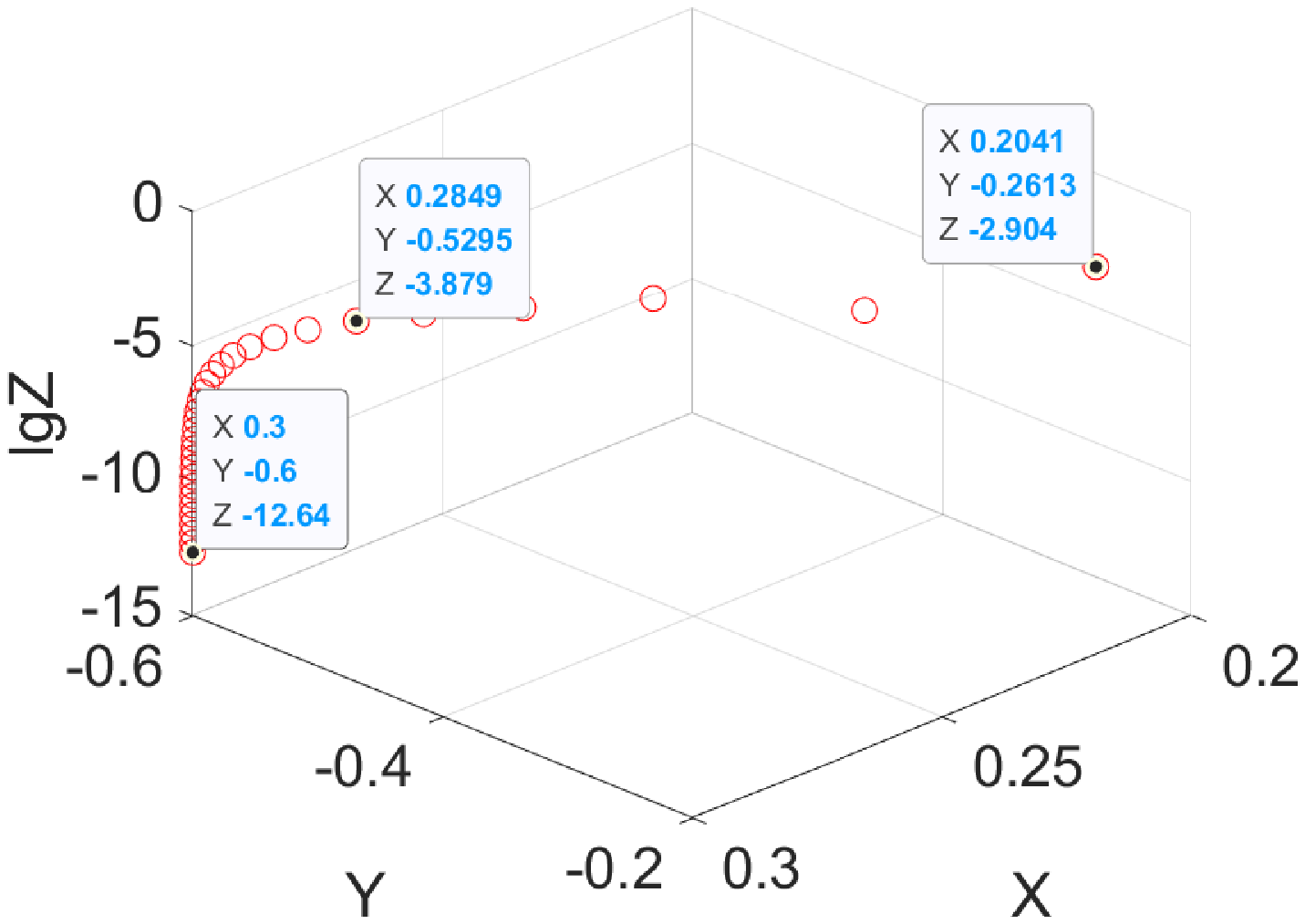}
  \includegraphics[width=5cm]{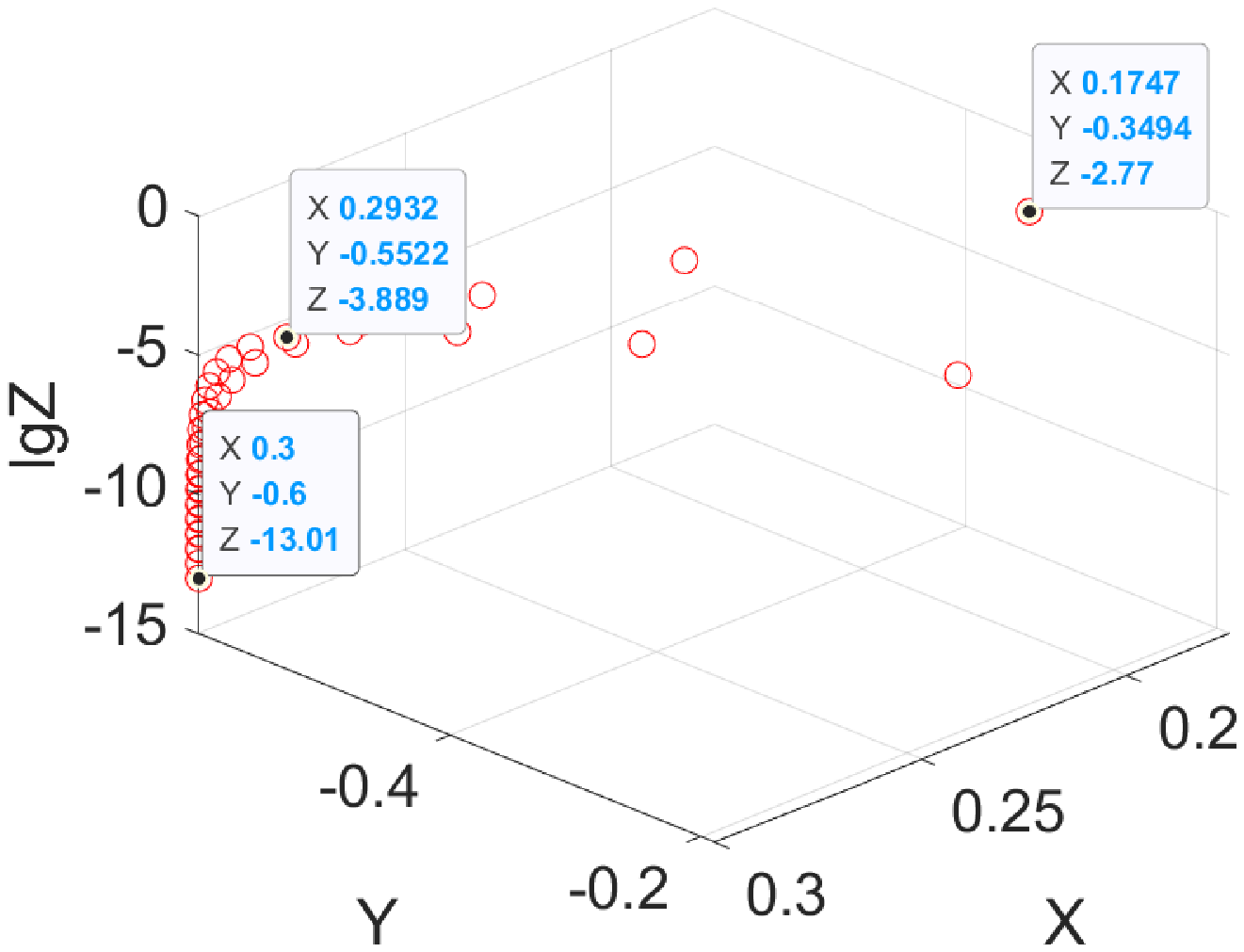}
  \includegraphics[width=5cm]{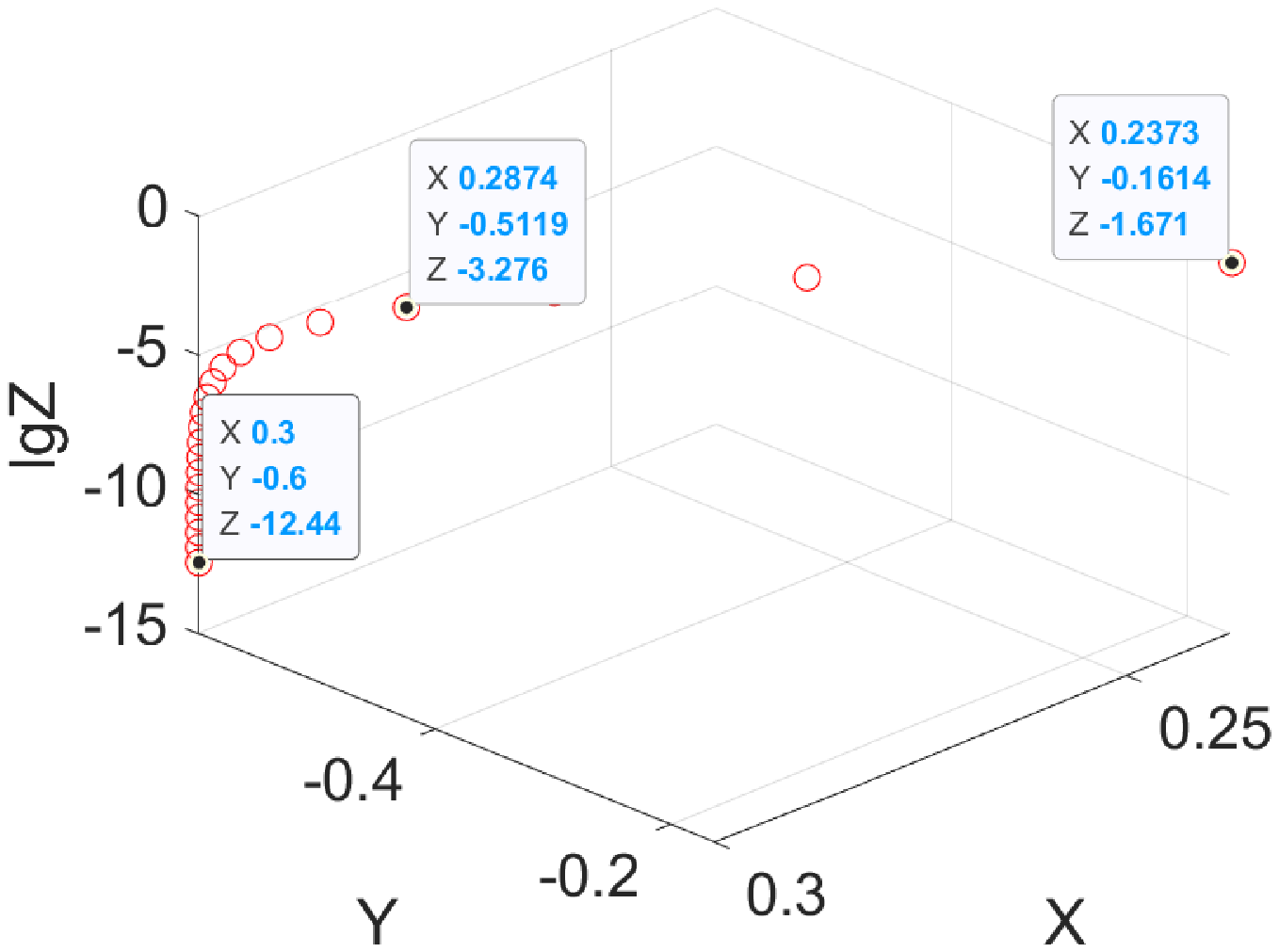}
     \caption{Behaviors of  Algorithm 3.1 for $\theta_n=0.5-1/(n+1)^5,\theta_n=0,\theta_n=1.$}
\label{Fig:6}
\end{figure}
\begin{figure}[]
  \centering
  \includegraphics[width=5cm]{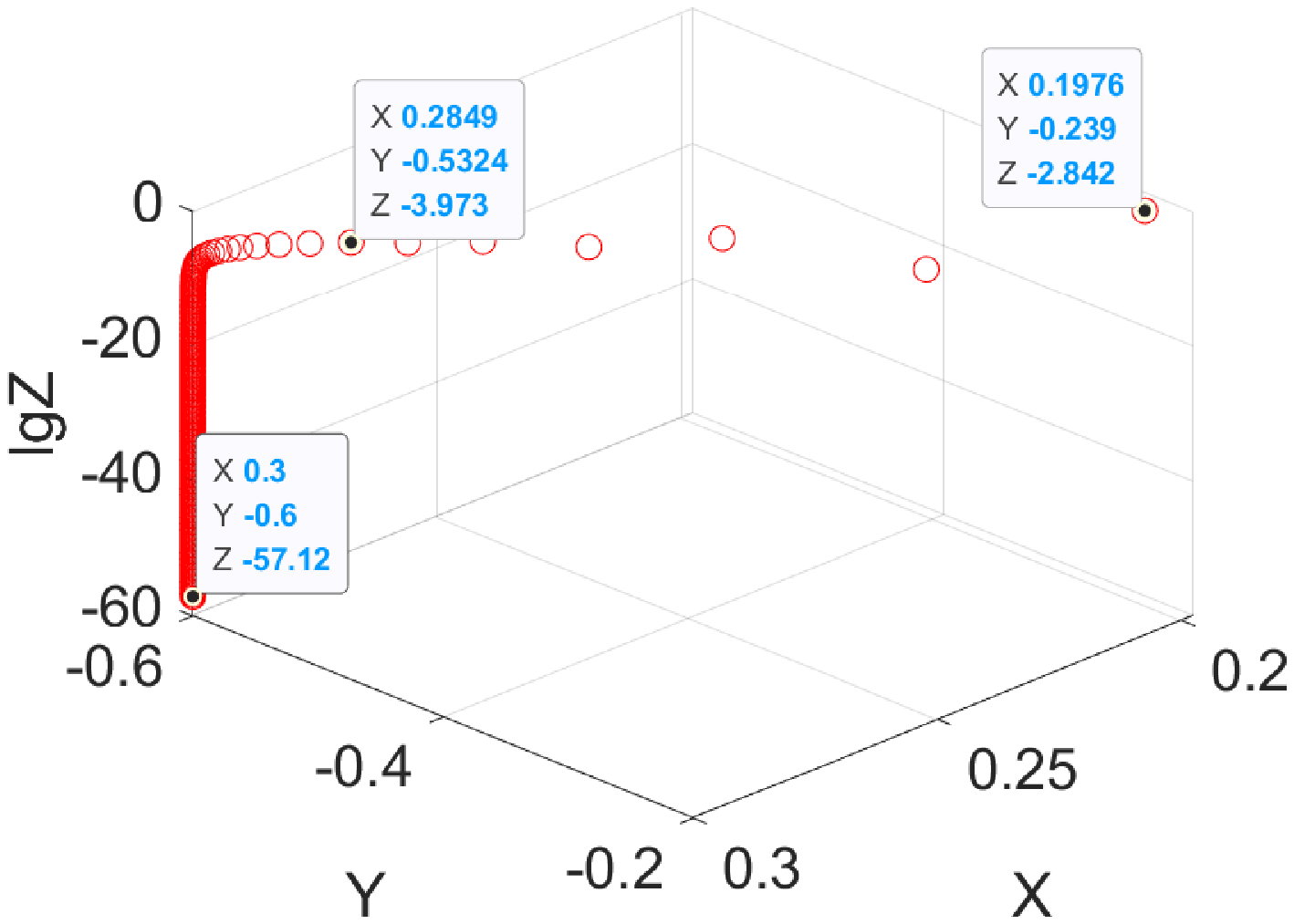}
  \includegraphics[width=5cm]{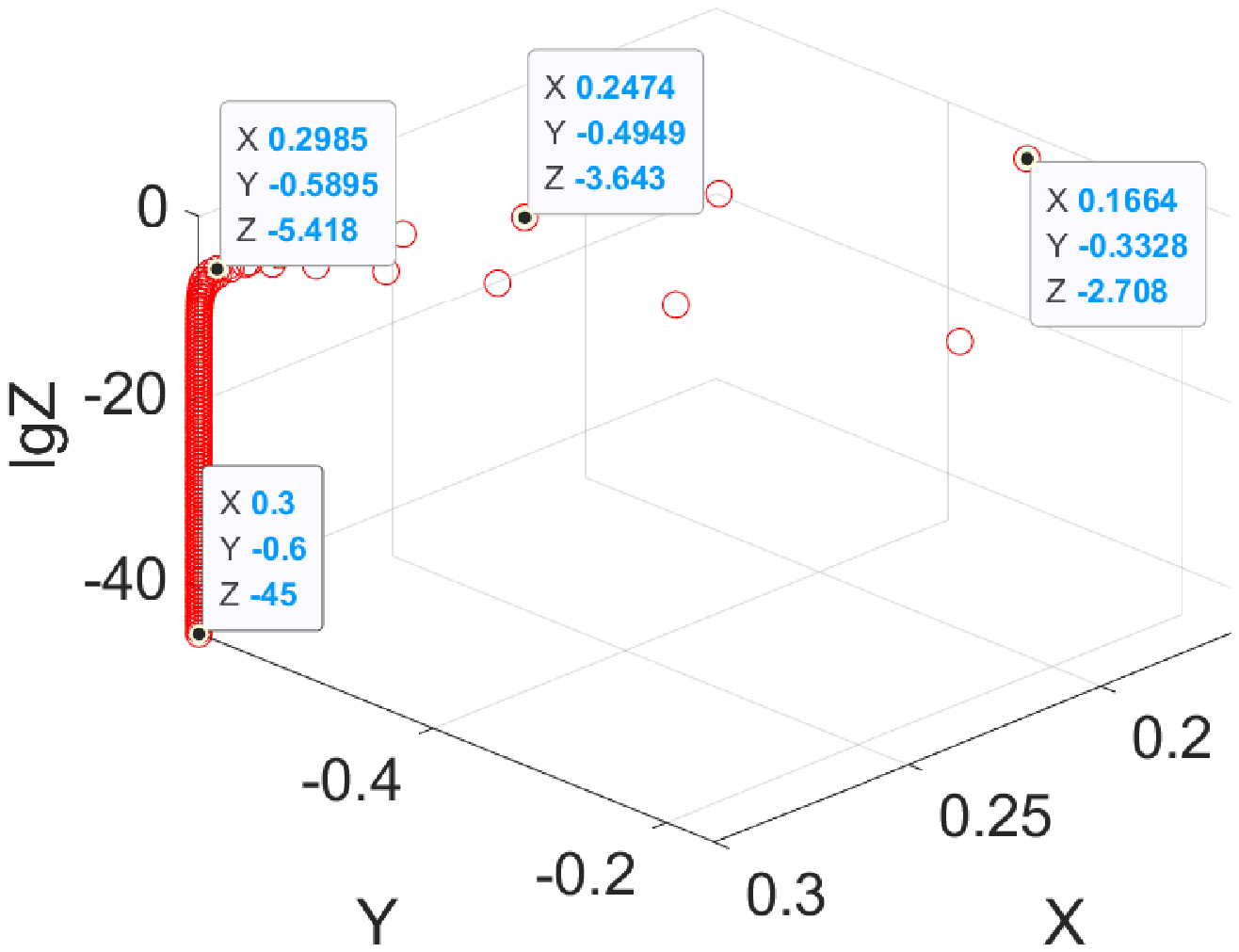}
  \includegraphics[width=5cm]{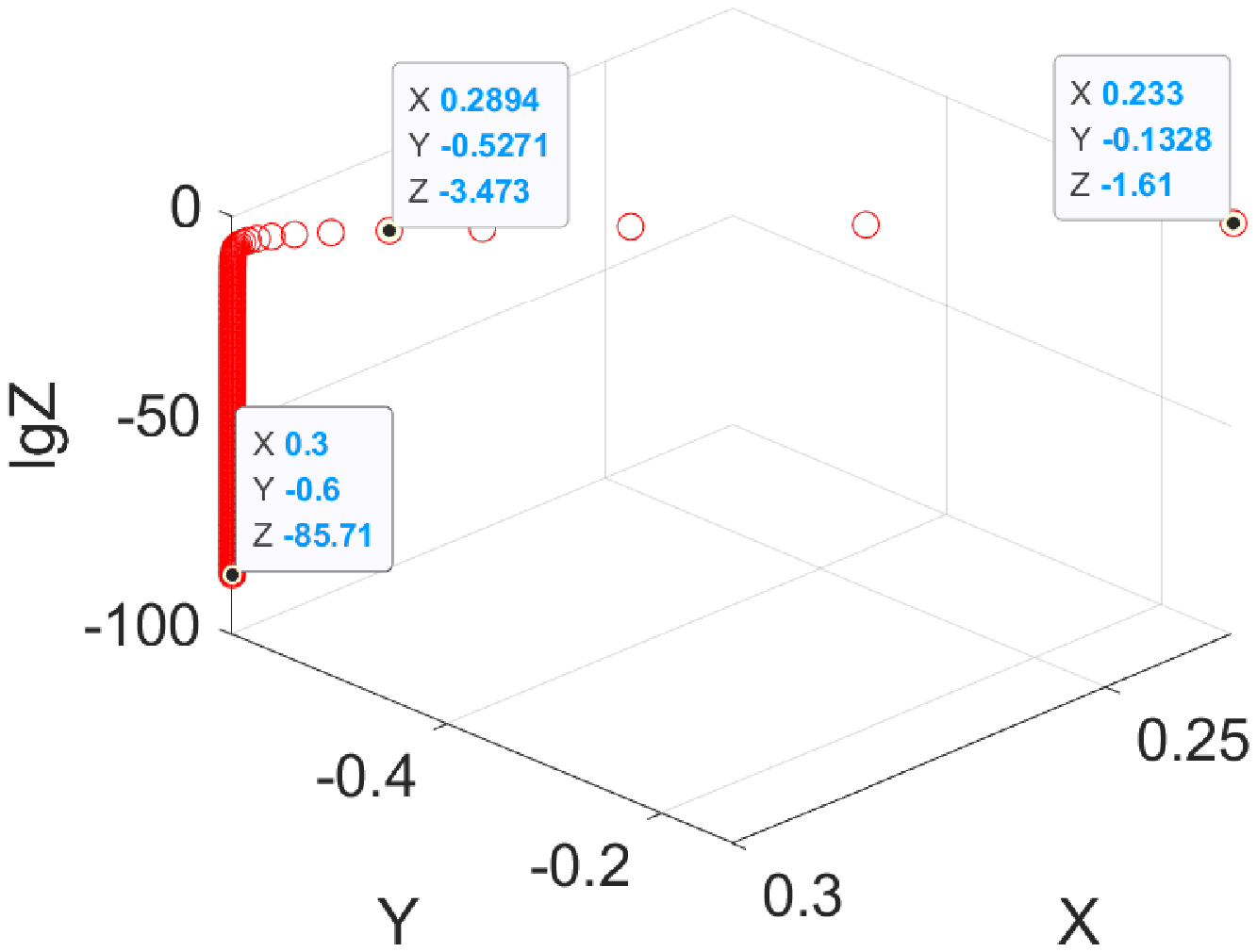}
     \caption{Behaviors of  Algorithm 3.2 for $\theta_n=0.5-1/(n+1)^5,\theta_n=0,\theta_n=1.$}
\label{Fig:7}
\end{figure}
Indeed,  if $\theta_n\equiv1$, then  Algorithm 3.1 coincides with the result of Moudafi\cite{M2018}. In addition,  from Figs.\ref{Fig:6}-\ref{Fig:7}, we can see that when $\theta_n=0$, two families of points alternate, and finally infinitely close to the exact null point $z=(3/10,-6/10,0)$.
\vskip 2mm

{\bf Example 5.2.} In this example, we show the behaviors of  our algorithms in $H=L^2([0,1])$. In addition, the compared results with  Dadashi and Postolach\cite{DP2020} are considered. In the simulation, we define  two mappings  $A$ and $B$ by $Bx(t)= \frac{x(t)}{2}$ and $Ax(t)=\frac{3x(t)}{4}$ for all $x(t)\in L^2([0,1])$. Then it can be shown that  $B$ is  $\frac{1}{2}-$ cocoercive.
 In the numerical experiment, the parameters are chosen as $\alpha_n=0.5-1/(10n+2)$, $\beta_n=\frac{1}{n+1}$ in Algorithm 3.2 for all $n\geq 1$.
 In addition, $\|x_{n+1}-x_n\|< 10^{-4}$ is used as stopping criterion and the following three different choices of initial functions $x_0(t), x_1(t)$ are chosen:
 \vskip 1mm
{\bf Case 1:}\,  $x_0(t)=\frac{\sin(-3t)+\cos(-5t)}{2}$ and $x_1(t)=2sin(5t) $;
\vskip 2mm
{\bf  Case 2:}\, $x_0(t)=\frac{2t\sin(3t)e^{-5t}}{200}$ and $x_1(t)=t^2-e^{-2t}$;
\newpage
{\bf Case 3:}\,  $x_0(t)=2t^3e^{5t}$ and $x_1(t)=\frac{e^{t}\sin(3t)}{100}$.

\vskip 2mm

Figs.\ref{Fig:1}-\ref{Fig:3} represent the numerical results for $\theta_n$ neither 0 nor 1, Fig.\ref{Fig:4} shows  the numerical results for $\theta_n=0$ and $\theta_n=1$, respectively. Fig.\ref{Fig:8} shows the comparising results with Dadashi and Postolach\cite{DP2020} for the initial points $x_0(t)=\frac{e^{t}\sin(3t)}{100}$  and $x_0(t)=t^2-e^{-2t}$, respectively. Table{\ref{Tab:1}} shows the comparisons  with Dadashi and Postolach\cite{DP2020} for the initial point $x_0(t)=t^2-e^{-2t}$.

\begin{figure}[]
  \centering
      \includegraphics[width=5.5cm]{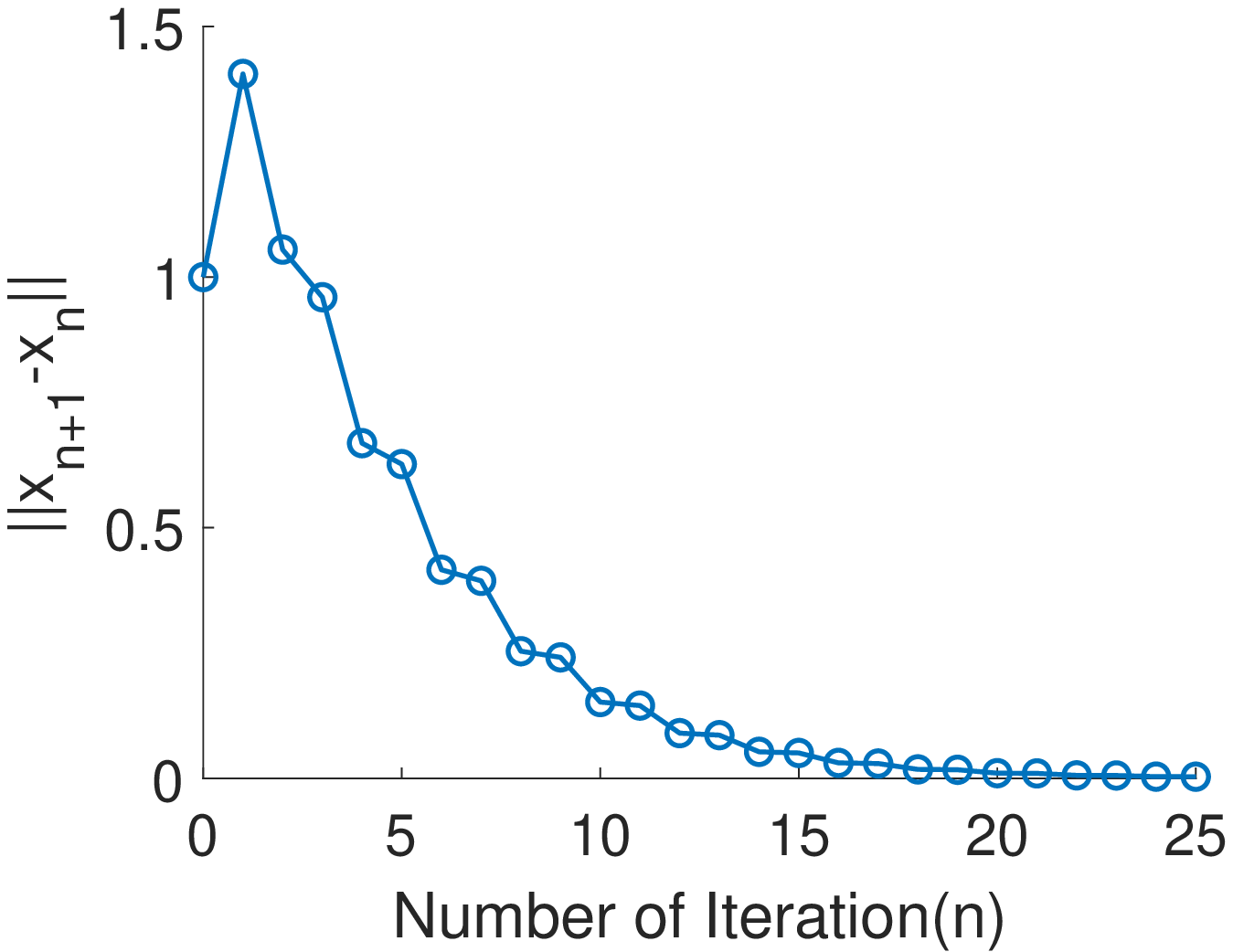}
   \includegraphics[width=5.5cm]{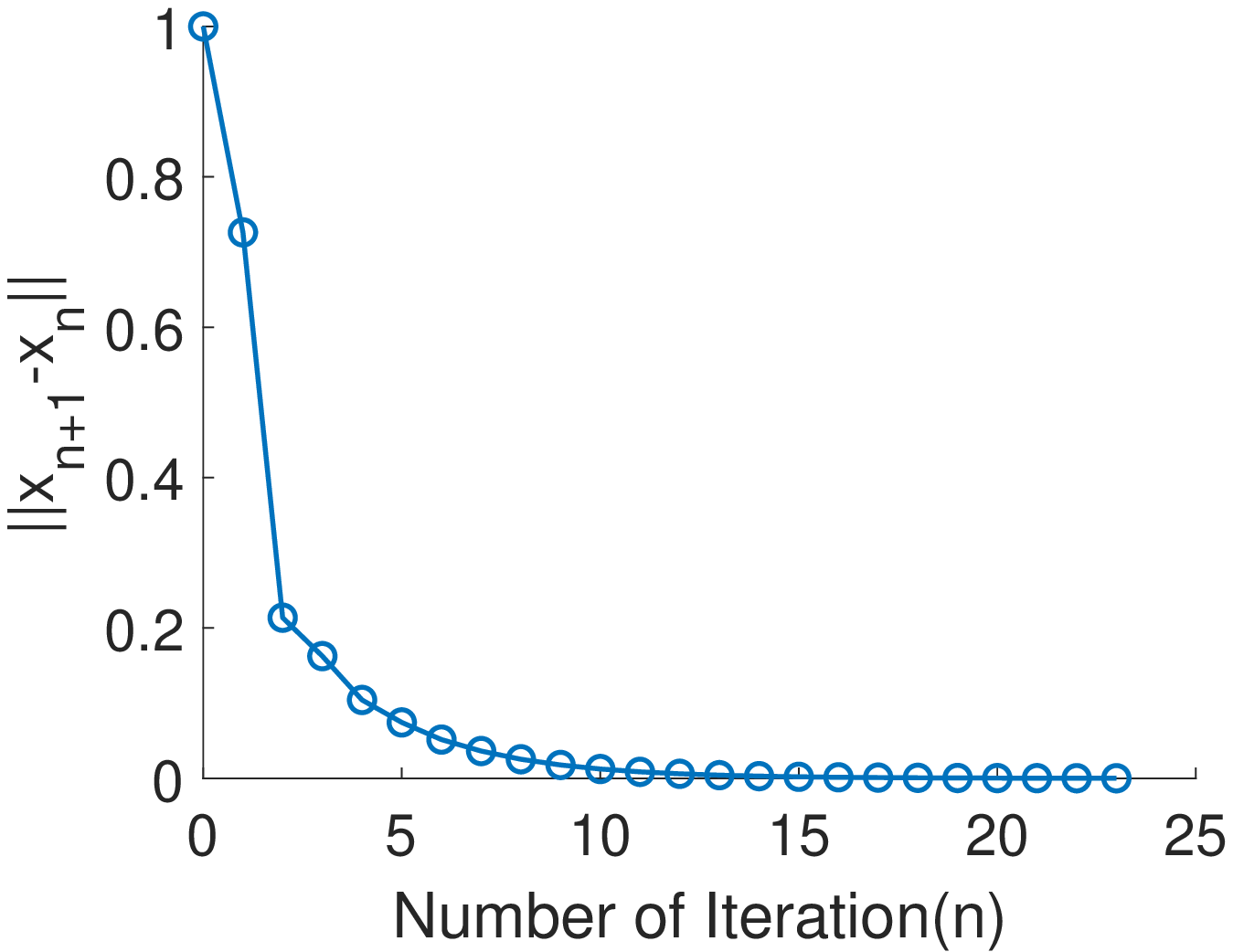}
   \caption{\small{Behavior of Algorithm 3.1 and Algorithm 3.2 for Case 1 in Exp.5.2.}}
  \label{Fig:1}
  \end{figure}

\begin{figure}[]
  \centering
   \includegraphics[width=5.5cm]{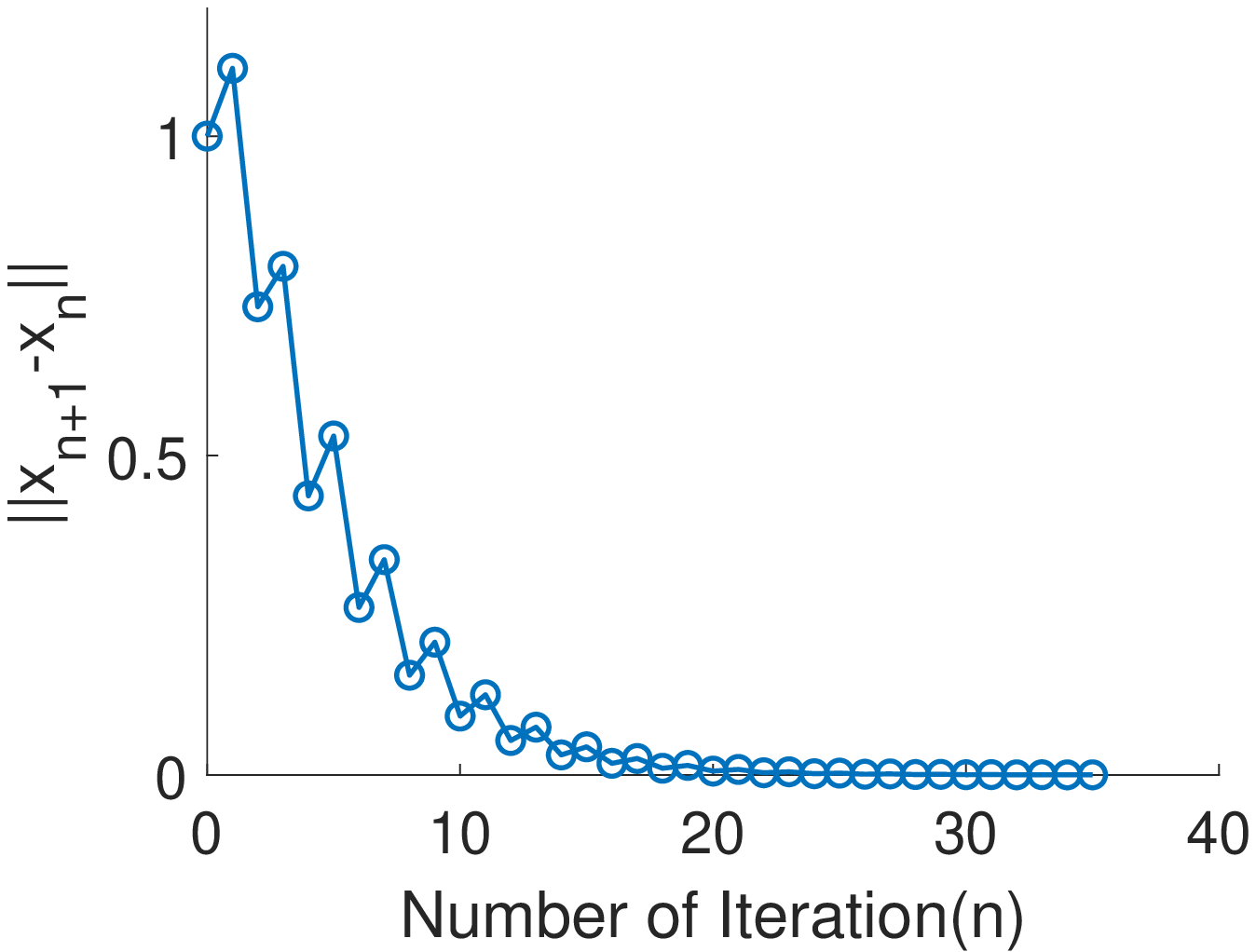}
   \includegraphics[width=5.5cm]{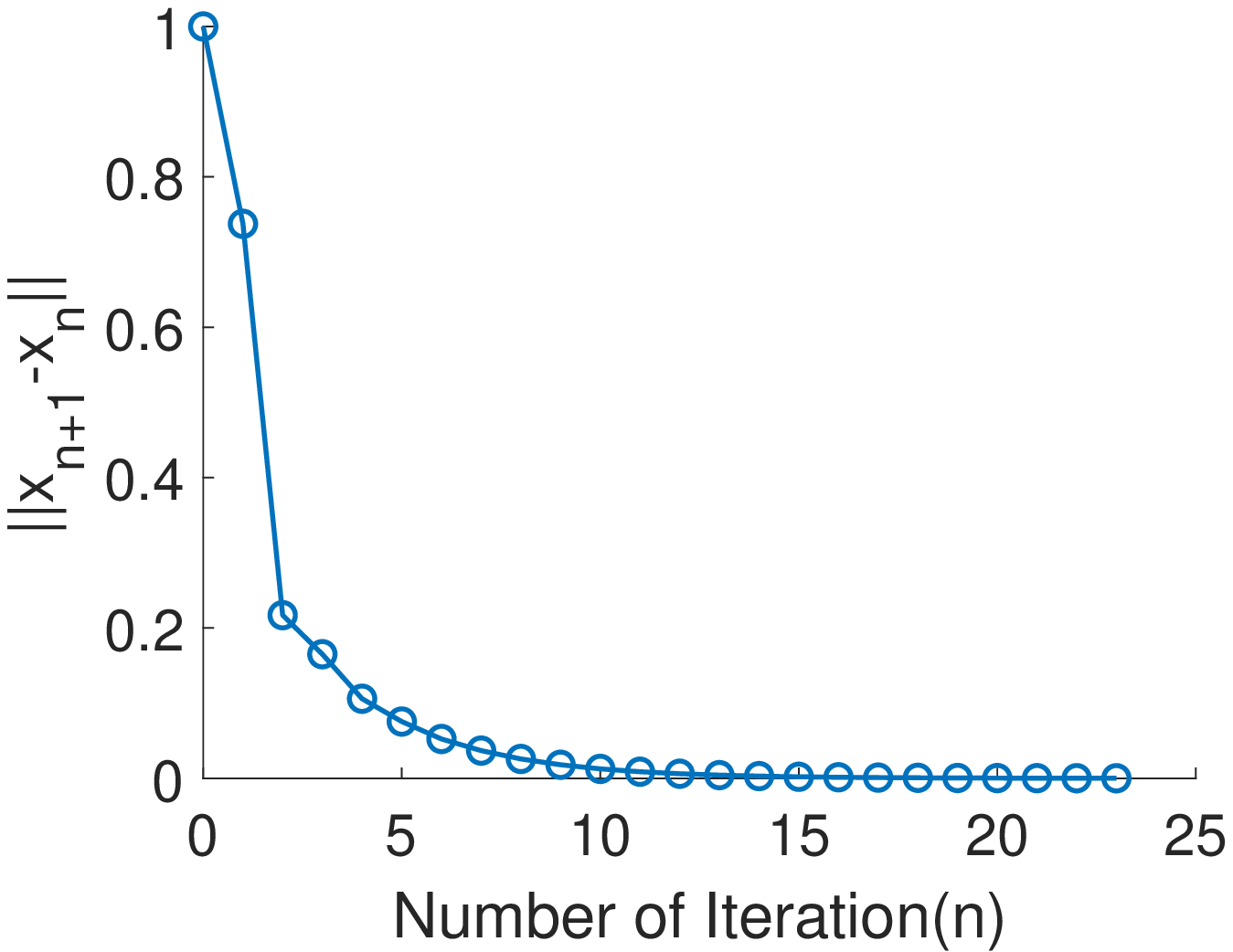}
   \caption{\small{Behavior of Algorithm 3.1 and Algorithm 3.2 for Case 2 in Exp.5.2.}}
  \label{Fig:2}
  \end{figure}
\begin{figure}[]
  \centering
   \includegraphics[width=5.5cm]{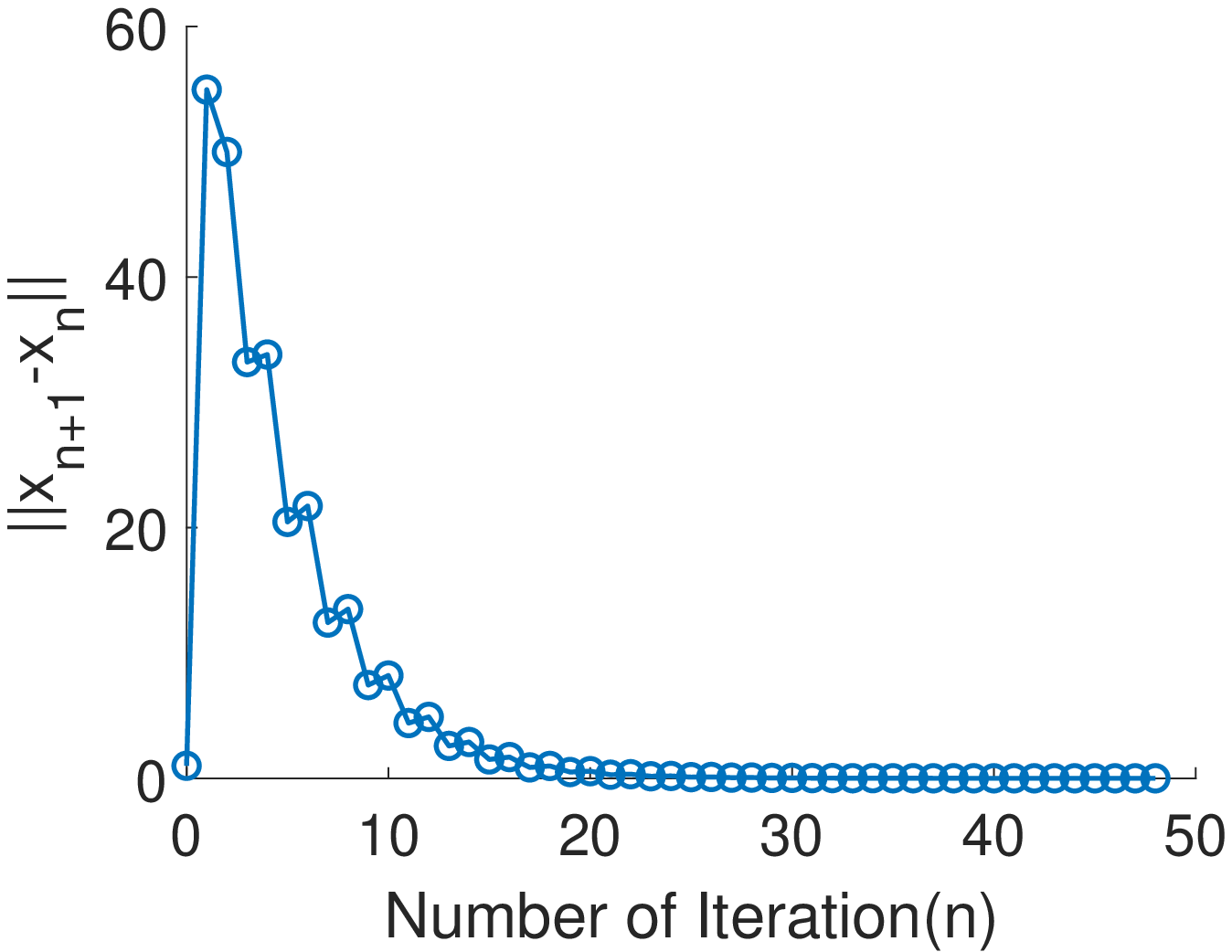}
   \includegraphics[width=5.5cm]{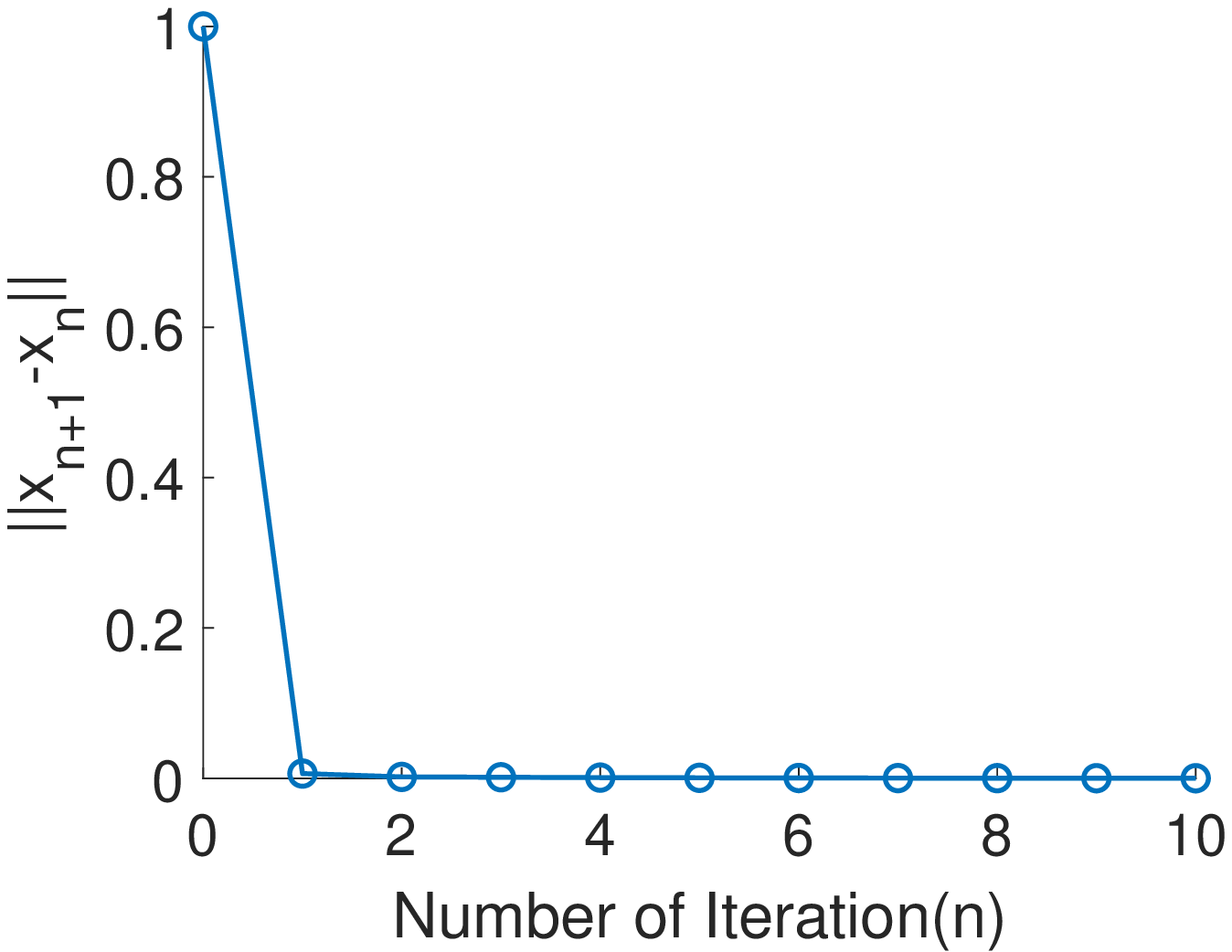}
   \caption{\small{Behavior of Algorithm 3.1 and Algorithm 3.2 for Case 3 in Exp.5.2.}}
  \label{Fig:3}
  \end{figure}
\begin{figure}[]
  \centering
   \includegraphics[width=5.5cm]{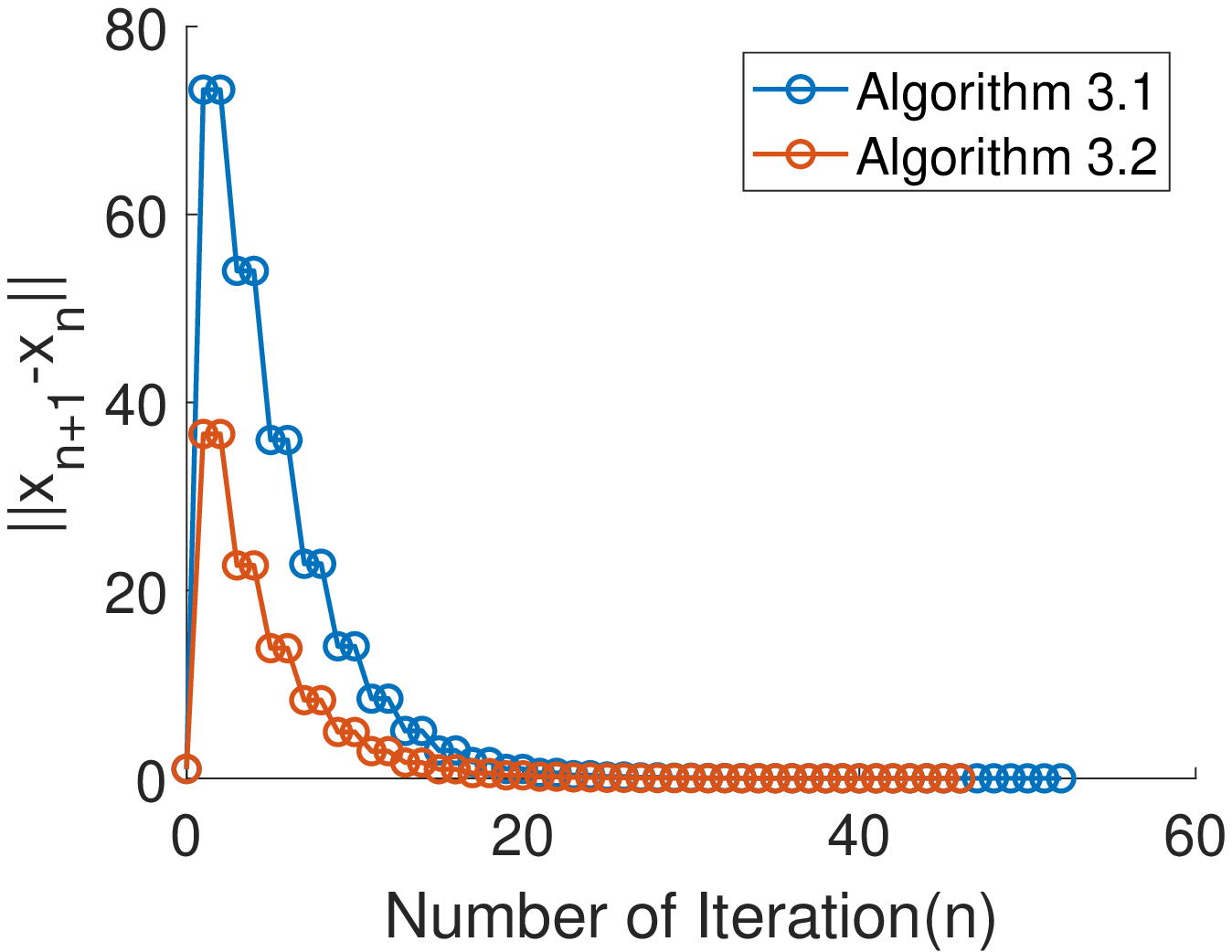}
   \includegraphics[width=5.5cm]{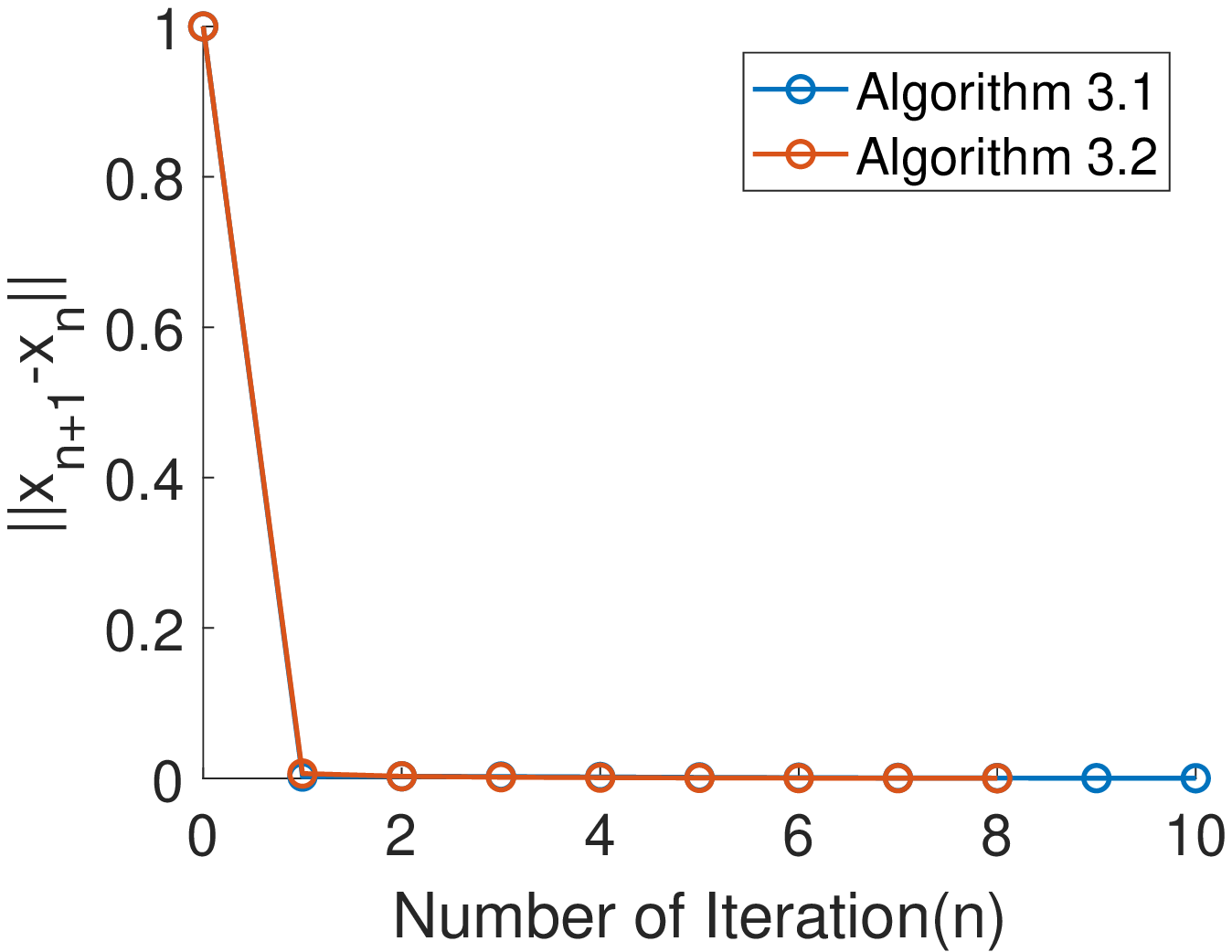}
   \caption{\small{Behavior of Algorithms 3.1 and 3.2 for $\theta_n=0$ and $\theta_n=1$ in Exp.5.2.}}
  \label{Fig:4}
  \end{figure}
\newpage
      \begin{figure}[]
  \centering
   \includegraphics[width=6cm]{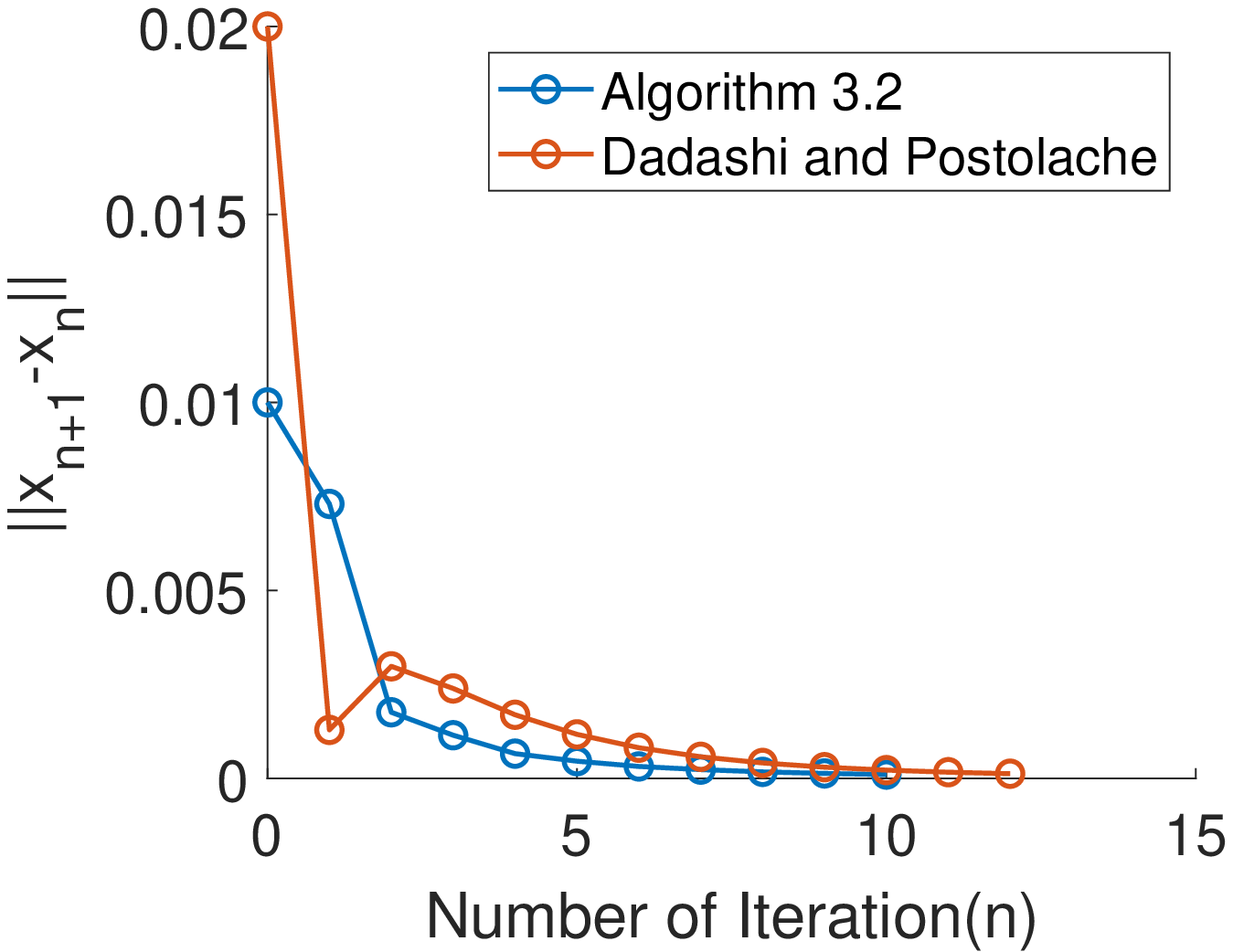}
   \includegraphics[width=6cm]{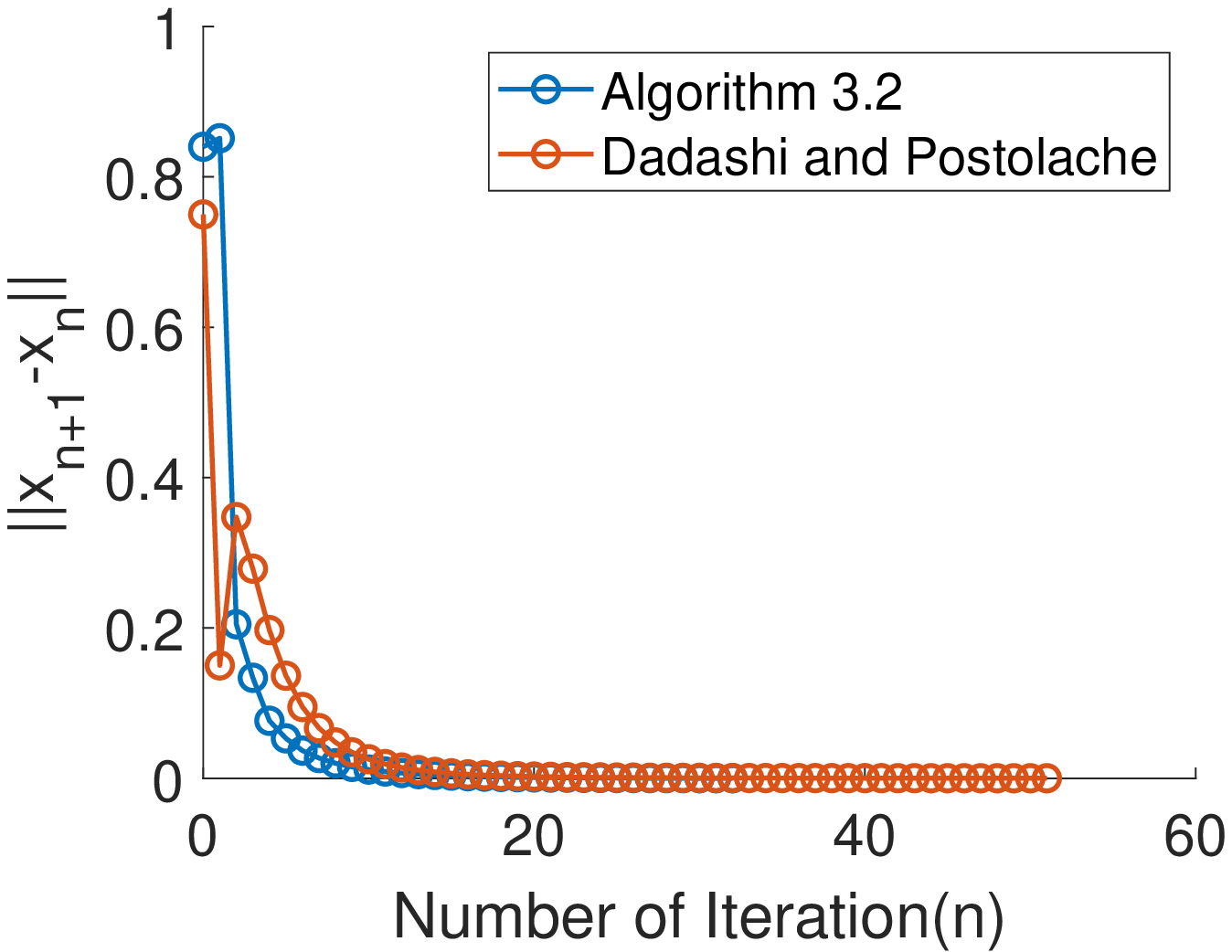}
   \caption{\small{Comparisons of Algorithm 3.2 with Dadashi and Postolach\cite{DP2020} in Exp.5.2.}}
  \label{Fig:8}
  \end{figure}

\begin{table}[]\small
  \centering
  \renewcommand{\arraystretch}{1}
  \caption{Comparisons of Algorithm 3.1, Algorithm 3.2 and Dadashi and Postolach\cite{DP2020}}
\label{Tab:1}
\begin{tabular}{  c c c c c c c }
  \hline
\hline
Error&Iter.&   & &CPU in second & & \\
\cline{2-4}
\cline{5-7}
 &Alg.1&Alg.2&Alg. in Dadashi et al.[31]&Alg.1&Alg.2 &Alg. in Dadashi et al.[31] \\
\hline
$10^{-3}$&41&21    &26         &6.2813&3.4688 &3.8438 \\
$10^{-4}$&49&33  &52     &10.0445&7.6542 &7.8461 \\
$10^{-5}$&57&45   &110   &10.2969&8.6768 &24.3212 \\
\hline
\end{tabular}
\end{table}
It is clear that our algorithms, especially Algorithm 3.2, is  faster, more efficient,
more stable.
\vskip 2mm
{\bf Example 5.3.} (Compressed Sensing) In this example, to show the effectiveness and applicability of our algorithms  in the real world, we consider the recovery of a sparse and noisy signal from a limited number of sampling which is a problem from the field of compressed sensing.  The sampling matrix $T\in R ^{m*n}$, $m<n$, is stimulated by standard Gaussian distribution and vector $b=Tx+\epsilon$, where $\epsilon$ is additive noise. When $\epsilon=0$, it means that there is no noise to the observed data. For further explanations the reader can consult Nguyen and Shin\cite{NS2013}.
\vskip 2mm
 Let $x_0\in R^n$ be the $K$-sparse signal, where $K<<n$. Our task is to recover the signal $x_0$ from the data $b$. To this end, we  transform it into finding the solution of LASSO problem:
\begin{eqnarray*}
&\min_{x\in R^n}\|Tx-b\|^2\\
&s.t.\,\, \|x\|_1\leq t,
\end{eqnarray*}
where $t$ is a given positive constant. If we define
\begin{equation*}
B_1(x)=
\begin{cases}
 \{u:\sup_{\|x\|_1\leq t}\langle x-y,u\rangle\leq 0\}, &\hbox{if}\,\, y \in R^n,\\
 \emptyset, &\hbox{otherwise},
\end{cases}
\quad
B_2(x)=
\begin{cases}
 R^m,&\hbox{if}\,\, x=b,\\
\emptyset,&\hbox{otherwise},
\end{cases}
\end{equation*}
then one can see that the LASSO problem coincides with the  problem of finding $x^*\in R^n$ such that
$$
0\in B_1(x^*)\,\,\, \hbox{and}\,\,\, 0\in B_2(Tx^*),
$$
 which associates with the the problem for finding the solution of the problem:
 $$
 0 \in (A+B)x^*,
 $$
 where  $A=B_1$ and  $B=T^*(I-J_\lambda B_2)T$ with  $\gamma-$ cocoercive, $\gamma=\frac{1}{\|T^*T\|}$.
\vskip 2mm
In addition to showing the behavior of our algorithms, the results of Sitthithakerngkiet et al. \cite{SDMK2018},  Kazimi and Riviz \cite{KR2014} without inertial process and Tang\cite{Tang2019} with general inertial method are compared. For the experiment setting, we choose the following parameters: $T\in R^{m*n}$ is generated randomly with $m=2^6,2^7$, $n=2^8,2^9$, $x_0\in R^n$ is $K$-spikes $(K=40, 60)$ with amplitude $\pm{1}$ distributed in whole domain randomly.
\vskip 2mm
 For simplicity, we define the nonexpansive mappings $S_n:\mathbb{R}^3\rightarrow\mathbb{R}^3$ as
  $S_n=I$  for Algorithm 3.1 in Sitthithakerngkiet et al. \cite{SDMK2018}, and the strongly positive
  bounded linear operator $D=I$, the constant $\xi=0.5$
  and we give fixed point $u=0.1$. Moreover, we take $\alpha_n=0.5-1/(10*n+2)$, $\beta_n=10^{-3}/(n+1)$ in all compared algorithms.
  Let $t=K-0.001$  and $\|x_{n+1}-x_n\|\leq 10^{-4}$ be the stopping criterion. All the numerical results are presented in Table \ref {Tab:2} and Fig.\ref{Fig:5}.

\begin{table}[]\small
  \centering
  \renewcommand{\arraystretch}{1}
  \caption{The comparisons of Algorithm 3.1, Algorithm 3.2,
  Sitthithakerngkiet et al.  \cite{SDMK2018}, Kazmi et al. \cite{KR2014}, Tang\cite{Tang2019}}
\label{Tab:2}
\begin{tabular}{  c c c c c }
  \hline
  Method&$K=40$,$m=2^6$,$n=2^8$ & & $K=60$,$m=2^7$,$n=2^9$\\
   \hline
   & Iter.\hspace{1cm}Sec.& &Iter.\hspace{1cm}Sec.\\
   \hline
 Algorithm 3.1& 43\hspace{1cm} 0.0992& & 83 \hspace{1cm} 0.4431 \\
Algorithm 3.2&63 \hspace{1cm}0.1635 & & 79 \hspace{1cm} 0.6368     \\
  Sitthithakerngkiet et al. \cite{SDMK2018}&91\hspace{1cm} 0.12 & & 122\hspace{1cm} 0.6942\\
 Kazmi et al. \cite{KR2014}&54\hspace{1cm}0.0771& &39\hspace{1cm}0.1981\\
   Algo.3.2-Tang\cite{Tang2019} & 54 \hspace{1cm} 2.2632 &&67\hspace{1cm}3.2541\\
   \hline
\end{tabular}
\end{table}

\begin{figure}[]
  \centering
  \includegraphics[width=8cm]{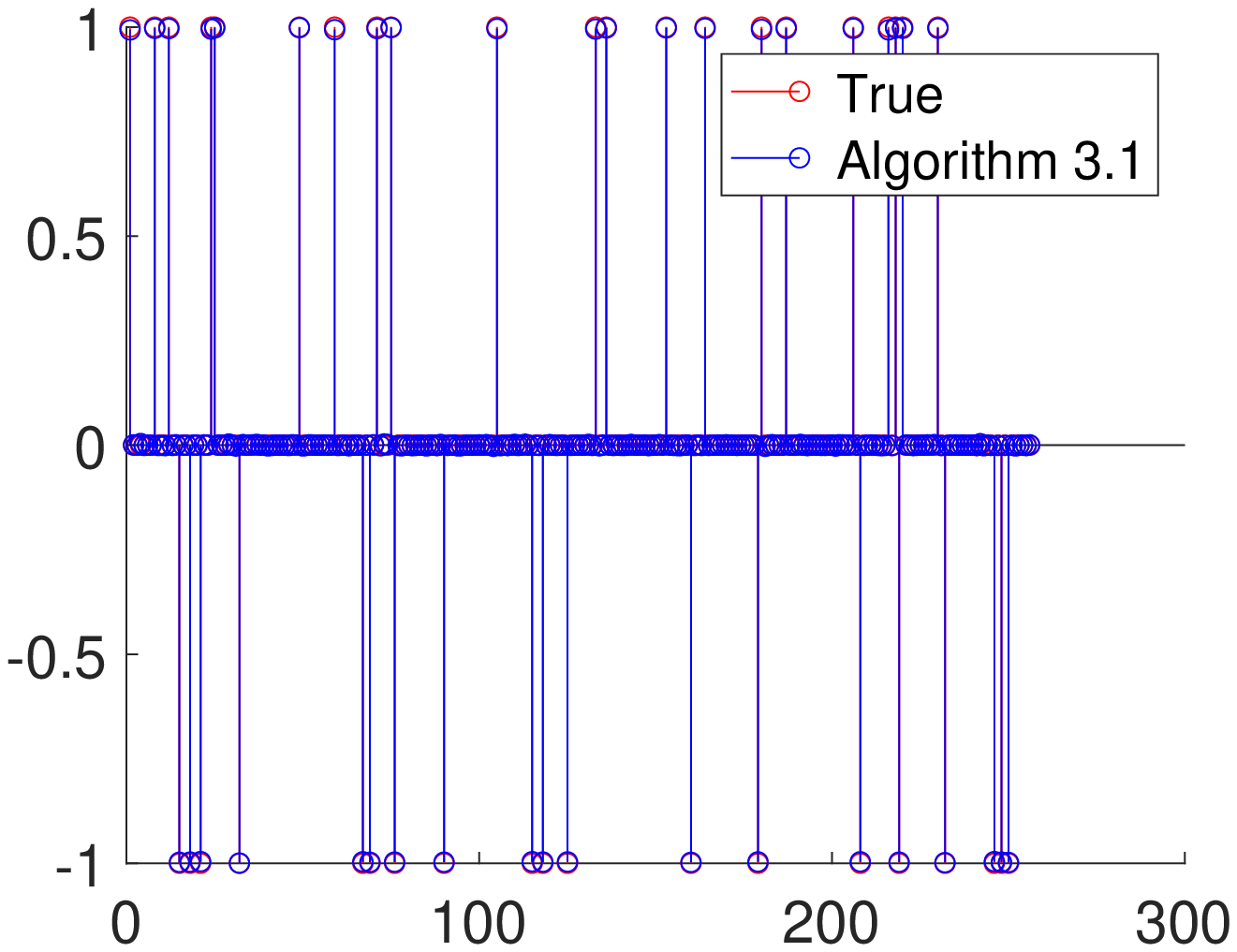}
  \includegraphics[width=8cm]{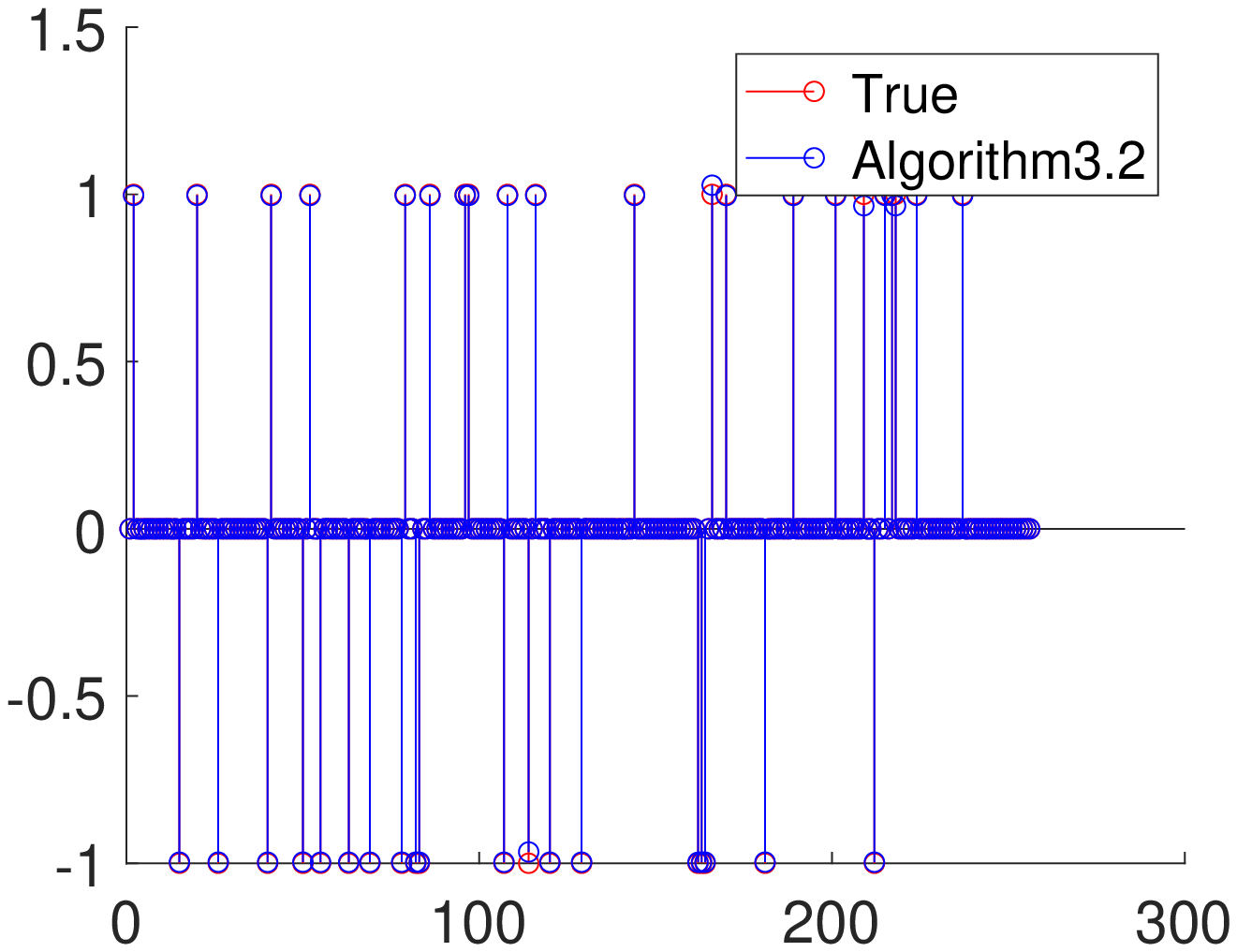}
  \includegraphics[width=8cm]{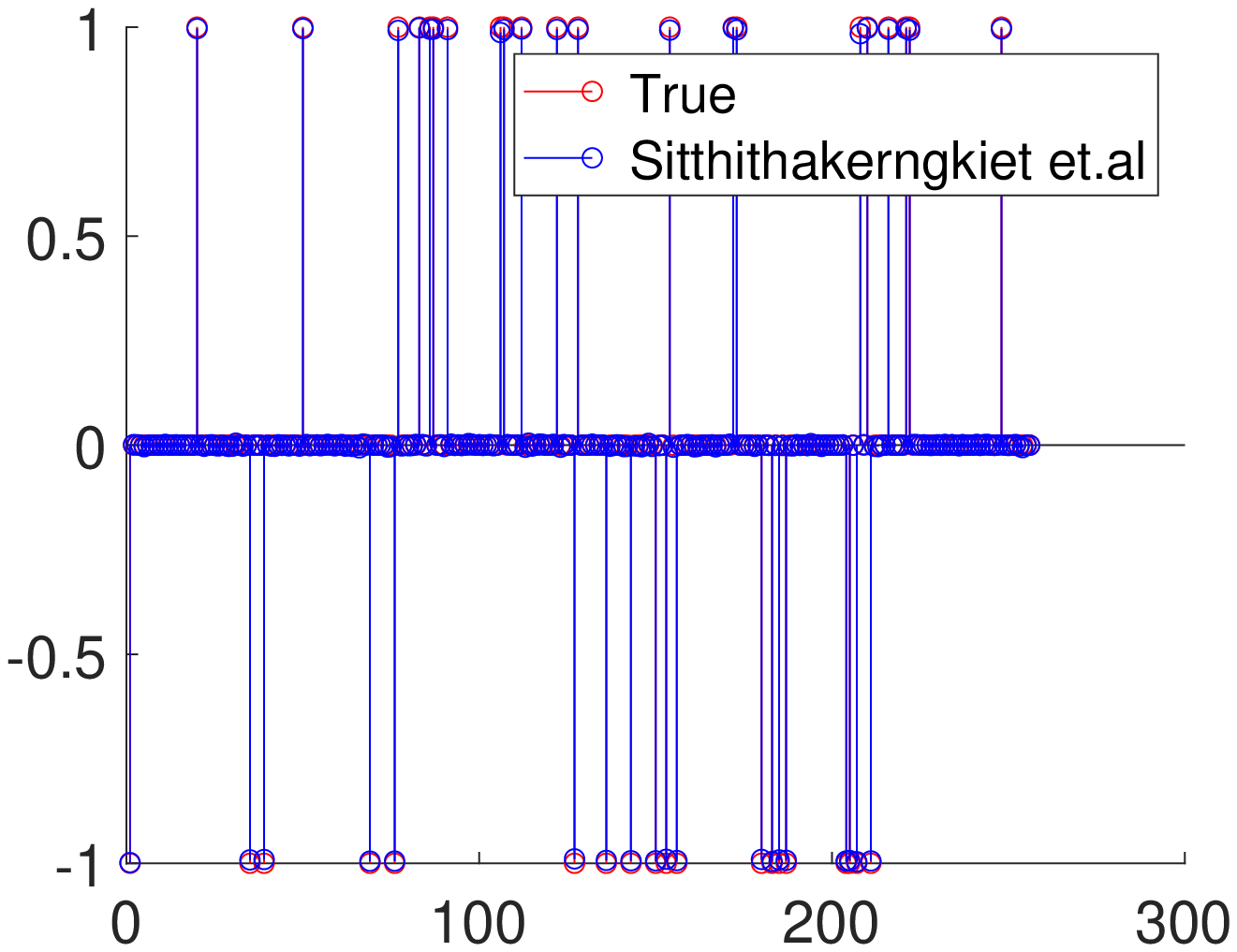}
  \includegraphics[width=8cm]{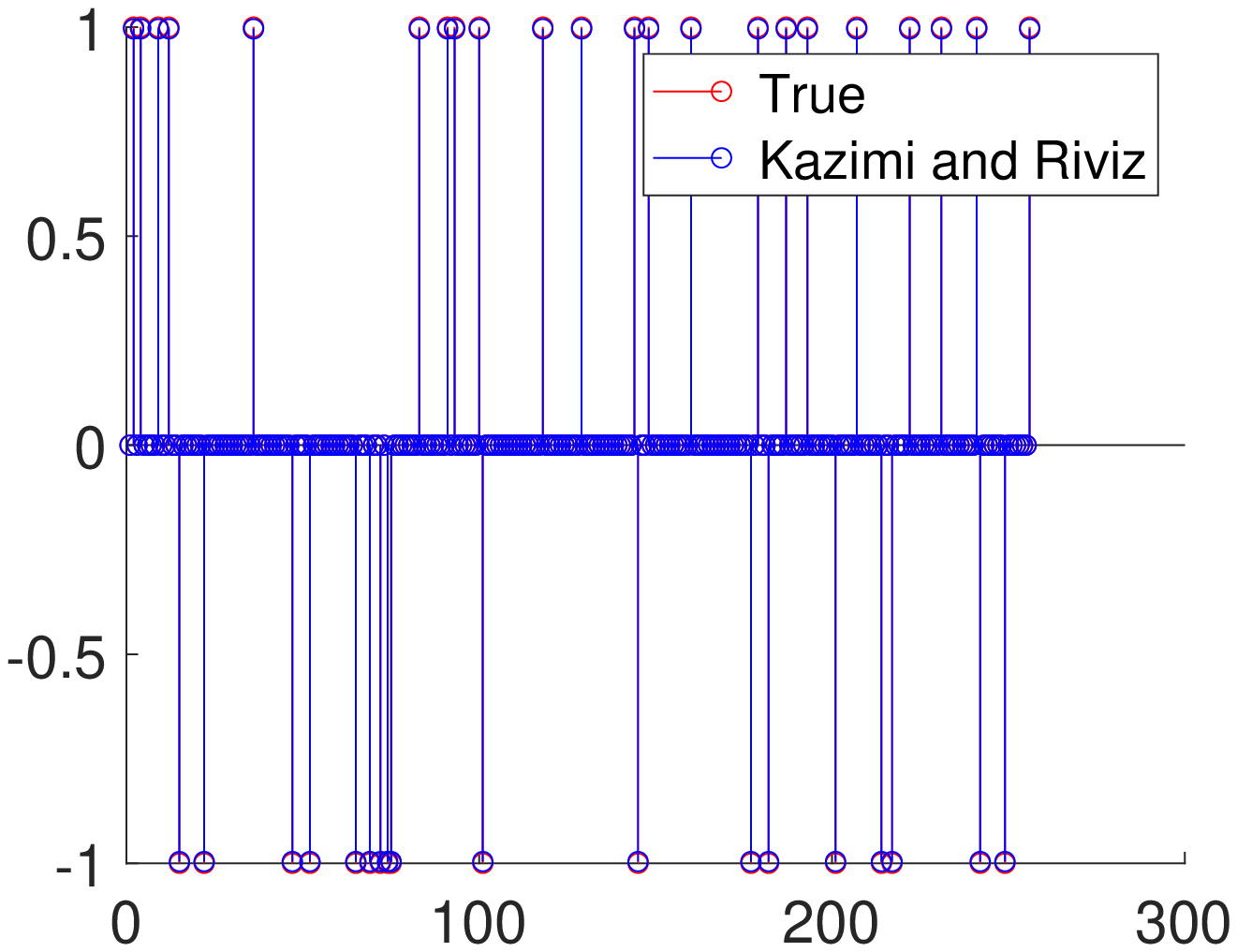}
  \includegraphics[width=8cm]{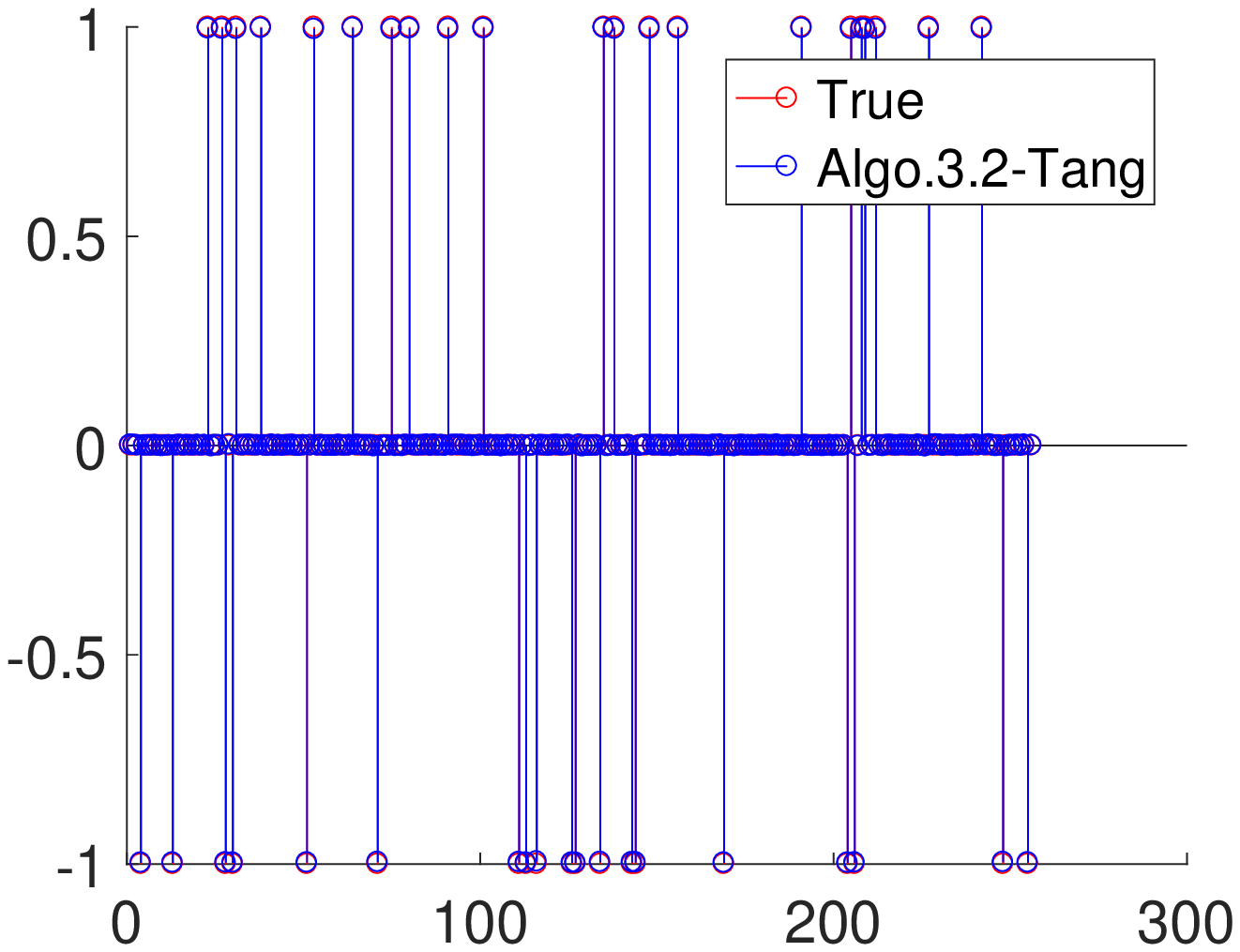}
  \includegraphics[width=8cm]{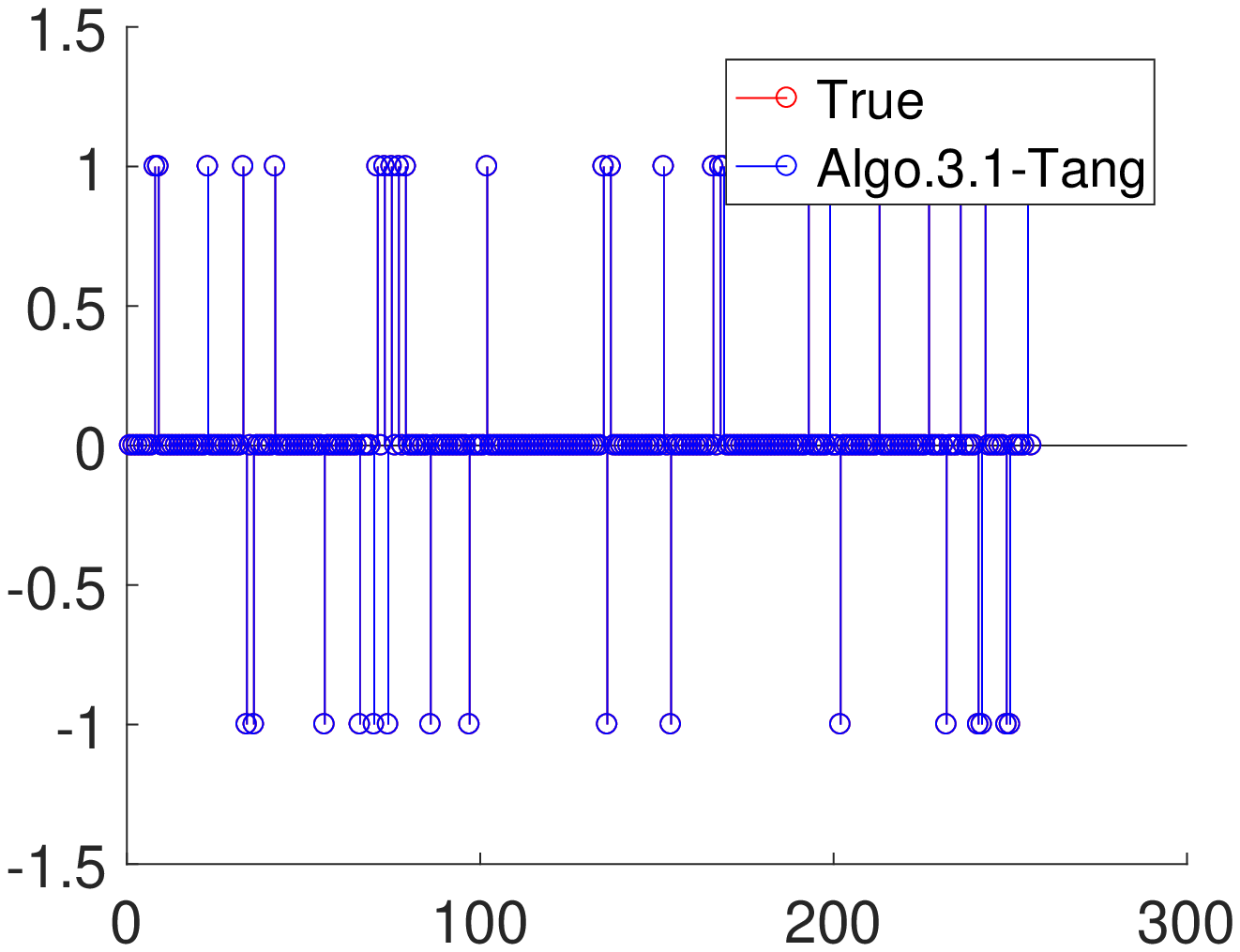}
   \caption{Numerical results for $m=2^6$, $m=2^8$ and $K=40$}
\label{Fig:5}
\end{figure}
\newpage



\newpage
\section{Conclusion}

We have provided two new {\it Inertial-Like Proximal} iterative algorithms (Algorithms 3.1 and 3.2) for the  {\it null point problem} .
We have proved that, under some mild conditions,  Algorithms 3.1
converges weakly and Algorithms 3.2 strongly to a solution of the {\it null point problem}. Thanks to the novel structure of the inertial-like technique, our Algorithms 3.1 and 3.2 seem to have the following merits:
\vskip 1mm
(i) Theoretically, they do not need verify the traditional and difficult checking condition $\sum_{n=1}^\infty\theta_n\|x_n-x_{n-1}\|^2<\infty$,  namely,  our convergence theorems still hold even without this condition.
\vskip 1mm

 (ii) Practically, they do not involve the computations of the norm of the difference between $x_n$ and $x_{n-1}$ before choosing the inertial parameter $\theta_n$(hence less computation cost), as opposed to almost all previous inertial algorithms in the existing literature, that is, the constraints on inertial parameter $\theta_n$ are looser and more natural,  so they are extremely attractive and friendly for users.

\vskip 1mm
(iii) Different from the  general inertial algorithms, the inertial factors $\theta_n$ in our {\it Inertial-Like Proximal} algorithms are chosen in $[0,1]$ with $\theta_n=1$ possible, which are new, natural  and interesting algorithms in some ways. In particular,  under more mild assumptions, our proofs are simpler and different from the others.
\vskip 1mm
We have included several numerical examples which show the efficiency and reliability of Algorithm 3.1 and Algorithm 3.2.
We have also made comparisons of Algorithm 3.1 and Algorithm 3.2 with other four algorithms in Sitthithakerngkiet et al. \cite{SDMK2018},  Kazimi and Riviz \cite{KR2014}, Tang\cite{Tang2019}, Dadashi and Postolach\cite{DP2020}
confirming some advantages of our novel inertial algorithms.\\




\noindent {\bf Acknowledgements.}
  This article was funded by the Science Foundation of China (12071316),
   Natural Science Foundation of Chongqing
(CSTC2019JCYJ-msxmX0661), Science and Technology Research Project
of Chongqing Municipal Education Commission (KJQN 201900804) and
the Research Project of Chongqing Technology
and Business University (KFJJ1952007).\\


\begin{thebibliography} {30}

\bibitem{A2004}Alvarez F.: Weak convergence of a relaxed and inertial hybrid projection-proximal point algorithm for
maximal monotone operators in Hilbert spaces.\textit{ SIAM J. Optim.} {\bf 14}, 773--782 (2004).

\bibitem{AA2001} Alvarez F., Attouch H.: An inertial proximal method for maximal monotone operators via discretization of a nonlinear oscillator with damping,Set-Valued Anal.9,3-11 (2001).

\bibitem{AKT2009} Aoyama K., Kohsaka F., Takahashi W.: Three generalizations of firmly nonexpansive mappings: their relations and
continuity properties. \textit{J. Nonlinear Convex Anal.} {\bf 10(1)}, 131--147 (2009).

\bibitem{AC2016} Attouch H., Chbani Z.: Fast inertial dynamics and FISTA algorithms in convex optimization, perturbation
aspects (2016).\textit{ arXiv:1507.01367.}

\bibitem{ACPR2018} Attouch H., Chbani Z., Peypouquet J., Redont P.: Fast convergence of inertial dynamics and algorithms
with asymptotic vanishing viscosity.\textit{ Math. Program.} {\bf 168}, 123-175 (2018).

\bibitem{APR2014} Attouch H., Peypouquet J., Redont P.:A dynamical approach to an inertial forward-backward algorithm for convex minimization. \textit{SIAM J. Optim.}{\bf 24(1)}, 232-256 (2014).

\bibitem{B2016} Boikanyo,O.A.:The viscosity approximation forward-backward splitting method for zeros of the sum of monotone operators. Abstr. Appl. Anal. 2016,2371857(2016). https://doi.org/10.1155/2016/2371857.

\bibitem{DP2020} Dadashi V.,Postolache M.: Forward-backward splitting algorithm for fixed point problems and zeros of the sum of monotone operators. Arab.J.Math.,9:89-99(2020).

\bibitem{DSX2017}Dang Y.,Sun J.,
Xu H.K.:Inertial accelerated algorithms for solving a split feasibility problem,J.Indus.Manage.Optim.,13,1383-1394(2017).

\bibitem{DR1956}Douglas J., Rachford H.H.: On the numerical solution of heat conduction problems in two or three space variables, Trans. Amer. Math. Soc.,82,421-439(1956).

\bibitem{GMN2018} Gibali A., Mai D.T., Nguyen T.V.: A new relaxed CQ algorithm for solving split feasibility problems in Hilbert spaces and its applications.\textit{ J. Indus. Manag. Optim.} {\bf 2018}, 1-25 (2018).

\bibitem{KR2014} Kazmi K.R., Rizvi S.H.: An iterative method for split variational inclusion problem and fixed point problem for a nonexpansive mapping. Optim Letter, 8, 1113-1124 (2014).

\bibitem{LM1979}Lions P.L., Mercier B.:Splitting algorithms for the sum of two nonlinear operators, SIAM J. Numer. Anal, 16, 964-979(1979).

\bibitem{NS2013} Nguyen T.L.N., Shin Y.: Deterministic sensing matrices in compressive sensing: A survey,Sci.World J.,2013(2013),Article ID192795,6pages.

\bibitem{M1970} Martinet,B.: R$\acute{e}$gularisation d`in$\acute{e}$quations variationnelles par approximations successives. Rev.Française Informat.Recherche Op$\acute{e}$rationnelle 3,154-158(1970).
  
\bibitem{M2008}Maing\'e P.E.: Strong convergence of projected subgradient methods for nonsmooth and nonstrictly convex minimization.
Set-valued Anal., 16, 899-912 (2008).

\bibitem{M1965}Moreau J.J.: Proximit$\acute{e}$ et dualit$\acute{e}$ dans un espace hilbertien.Bulletin de la Soci$\acute{e}$t$\acute{e}$ math$\acute{e}$matique de France 93,273-299(1965).

\bibitem{MT2013} Moudafi A., Thakur BS.: Solving proximal split feasibilty problem without prior knowledge of matrix norms.
Optimization Letters,  8(7) (2013). DOI10.1007/s11590-013-0708-4.

\bibitem{M2018} Moudafi,A.:On the convergence of the forward-backward algorithm for null-point problems.J.Nonlinear Var.Anal.2,263-268(2018).

\bibitem{MT1997} Moudafi,A.;Thera,M.:Finding a zero of the sum of two maximal monotone operators. J. Optim. Theory Appl. 94(2),425-448(1997).

\bibitem{O1967} Opial Z.: Weak convergence of the sequence of successive approximations for nonexpansivemappings,
Bull. Am. Math. Soc., 73 , 591-597 (1967).

\bibitem{OBP2015} Ochs P., Brox T., Pock T.: iPiasco: inertial proximal algorithm for strongly convex optimization. \textit{J.Math. Imaging Vis.} {\bf 53}, 171-181 (2015).

\bibitem{OCBP2014} Ochs P., Chen Y., Brox T., Pock T.: iPiano: Inertial proximal algorithm for non-convex optimization.
\textit{SIAM J. Imaging Sci.} {\bf 7}, 1388-1419 (2014).

\bibitem{P2015} Peypouquet J.: Convex optimization in normed spaces:Theory, Methods and Examples. Springer,Berlin (2015).

\bibitem{R1976} Rockafellar,R.T.: Maximal monotone operators and proximal point algorithm. SIAM J. Control Optim.14,877-898(1976).

\bibitem{SPC2017} Suantai S.,Pholasa N., Cholamjiak P.: The modified inertial relaxed CQ algorithm for solving the split feasibility problems,J.Ind.Manag.Optim.,13(5):1-21(2017).

\bibitem{SDMK2018} Sitthithakerngkiet K., Deepho J., Martinez-Moreno J.,  Kumam P.:
 Convergence analysis of a general iterative algorithm for finding a common solution of split variational inclusion
 and optimization problems. Numer Algor.,  2018. Doi:10.1007/s11075-017-0462-2.
 
\bibitem{Tang2019} Tang Y.: New inertial algorithm for solving split common null point problem in Banach spaces.
Journal of Inequalities and Applications. (2019) 2019:17. https://doi.org/10.1186/s13660-019-1971-4.

\bibitem{Xu2002} Xu H.K.: Iterative algorithms for nonliear operators. J. Lond. Math. Soc.,  66(2), 240-256 (2002).















 



































\end{thebibliography}
\end{document}